\newcommand*{\mailto}[1]{\href{mailto:#1}{\nolinkurl{#1}}}
\newcommand{\arxiv}[1]{\href{http://arxiv.org/abs/#1}{arXiv:#1}}
\newcommand{\bbC}{{\mathbb{C}}}
\newcommand{\bbN}{{\mathbb{N}}}
\newcommand{\bbR}{{\mathbb{R}}}
\newcommand{\cB}{{\mathcal B}}
\newcommand{\cH}{{\mathcal H}}
\newcommand{\cN}{{\mathcal N}}
\newcommand{\cS}{{\mathcal S}}
\newcommand{\cV}{{\mathcal V}}
\newcommand{\beq}{\begin{equation}}
\newcommand{\enq}{\end{equation}}
\renewcommand{\a}{\alpha}
\renewcommand{\b}{\beta}
\newcommand{\g}{\gamma}
\renewcommand{\d}{\delta}
\DeclareMathOperator{\supp}{supp}
\DeclareMathOperator{\dom}{dom}
\renewcommand{\Re}{\text{\rm Re}}
\renewcommand{\Im}{\text{\rm Im}}
\renewcommand{\ln}{\text{\rm ln}}
\newcommand{\no}{\notag}
\newcommand{\lb}{\label}
\newcommand{\f}{\frac}
\newcommand{\ol}{\overline}
\newcommand{\bs}{\backslash}
\newcommand{\wti}{\widetilde}
\newcommand{\oh}{o}
\newcommand{\hatt}{\widehat} 
\newcommand{\dott}{\,\cdot\,}
\renewcommand{\dot}{\overset{\textbf{\Large.}}}
\newcommand{\dotA}{{\hspace{0.08cm}\dot{\hspace{-0.08cm} A}}}
\newcommand{\bi}{\bibitem}
\let\geq\geqslant
\let\leq\leqslant
\newcommand{\lam}{\lambda}
\newcommand{\al}{\alpha}
\newcommand{\be}{\beta}
\newcommand{\Lr}{{L^2((a,b);rdx)}} 
\newcommand{\ACl}{{AC_{loc}((a,b))}}
\newcommand{\Ll}{{L^1_{loc}((a,b);dx)}}
\newcommand{\SL}{\text{\textnormal{SL}}}
\newcommand{\high}[1]{{\raisebox{1mm}{$#1$}}}
\def\theequation{\@arabic\c@equation}
\numberwithin{equation}{section}
\newtheorem{theorem}{Theorem}[section]
\newtheorem{definition}[theorem]{Definition}
\newtheorem{hypothesis}[theorem]{Hypothesis}
\newtheorem{example}[theorem]{Example}
\theoremstyle{remark}
\newtheorem{remark}[theorem]{Remark}
\begin{document}

\title[Donoghue $m$-functions]{Donoghue $m$-Functions for 
Singular Sturm--Liouville Operators}

\author[F.\ Gesztesy]{Fritz Gesztesy}
\address{Department of Mathematics, 
Baylor University, Sid Richardson Bldg., 1410 S.\,4th Street, Waco, TX 76706, USA}
\email{\mailto{Fritz\_Gesztesy@baylor.edu}}
\urladdr{\url{http://www.baylor.edu/math/index.php?id=935340}}

\author[L.\ L.\ Littlejohn]{Lance L. Littlejohn}
\address{Department of Mathematics, 
Baylor University, Sid Richardson Bldg., 1410 S.\,4th Street, Waco, TX 76706, USA}
\email{\mailto{Lance\_Littlejohn@baylor.edu}}
\urladdr{\url{http://www.baylor.edu/math/index.php?id=53980}}

\author[R.\ Nichols]{Roger Nichols}
\address{Department of Mathematics (Dept.~6956), The University of Tennessee at Chattanooga, 
615 McCallie Ave, Chattanooga, TN 37403, USA}
\email{\mailto{Roger-Nichols@utc.edu}}
\urladdr{\url{https://sites.google.com/mocs.utc.edu/rogernicholshomepage/home}}

\author[M. Piorkowski]{Mateusz Piorkowski}
\address{Faculty of Mathematics\\ University of Vienna\\
Oskar-Morgenstern-Platz 1\\ 1090 Wien}
\email{\href{mailto:Mateusz.Piorkowski@univie.ac.at}{Mateusz.Piorkowski@univie.ac.at}}

\author[J.\ Stanfill]{Jonathan Stanfill}
\address{Department of Mathematics, 
Baylor University, Sid Richardson Bldg., 1410 S.\,4th Street, Waco, TX 76706, USA}
\email{\mailto{Jonathan\_Stanfill@baylor.edu}}
\urladdr{\url{http://sites.baylor.edu/jonathan-stanfill/}}

\dedicatory{Dedicated to Sergey Naboko $($1950--2020\,$)$: Friend and Mathematician Extraordinaire}

\date{\today}
\@namedef{subjclassname@2020}{\textup{2020} Mathematics Subject Classification}
\subjclass[2020]{Primary: 34B20, 34B24, 34L05; Secondary: 47A10, 47E05.}
\keywords{Singular Sturm--Liouville operators, boundary values, boundary conditions, 
Donoghue $m$-functions.}

\begin{abstract} 

Let $\dotA$ be a densely defined, closed, symmetric operator in the complex, separable Hilbert space $\cH$ with equal deficiency indices and denote by 
$\cN_i = \ker \big(\big(\dotA\big)\high{^*} - i I_{\cH}\big)$, $\dim \, (\cN_i)=k\in \bbN \cup \{\infty\}$, 
the associated deficiency subspace of $\dotA$ . If $A$ denotes a self-adjoint extension of $\dotA$ in $\cH$, 
the Donoghue $m$-operator $M_{A,\cN_i}^{Do} (\dott)$ in $\cN_i$ associated with the pair
 $(A,\cN_i)$  is given by
\[
M_{A,\cN_i}^{Do}(z)=zI_{\cN_i} + (z^2+1) P_{\cN_i} (A - z I_{\cH})^{-1}
P_{\cN_i} \big\vert_{\cN_i}\,, \quad  z\in \bbC\backslash \bbR,     
\]
with $I_{\cN_i}$ the identity operator in $\cN_i$, and $P_{\cN_i}$ the orthogonal projection in
$\cH$ onto $\cN_i$. 

Assuming the standard local integrability hypotheses on the coefficients $p, q,r$, we study all self-adjoint 
realizations corresponding to the differential expression  
\[
\tau=\f{1}{r(x)}\left[-\f{d}{dx}p(x)\f{d}{dx} + q(x)\right] \, \text{ for a.e.~$x\in(a,b) \subseteq \bbR$,}     
\]
in $L^2((a,b); rdx)$, and, as the principal aim of this paper, systematically construct the associated Donoghue $m$-functions (resp., $2 \times 2$ matrices) in all cases where $\tau$ is in the limit circle case at least at one interval endpoint $a$ or $b$.  
\end{abstract}

\maketitle

{\scriptsize{\tableofcontents}}

\maketitle



\section{Introduction} \lb{s1}

{\it Sergey's contributions to operator and spectral theory are legendary and will pass the test of time. He had a very keen eye for the interface of complex analysis and operator theory, and was quite interested in all aspects of $($operator-valued\,$)$ $m$-functions. We hope our modest contribution would have been something he might have enjoyed.}

To set the stage we briefly discuss abstract Donoghue $m$-functions following \cite{GNWZ19} (see also  \cite{GKMT01}, \cite{GMT98}). Given a self-adjoint extension $A$ of a densely defined, closed, symmetric operator 
$\dotA$ in $\cH$ (a complex, separable Hilbert space) with equal deficiency indices and the deficiency subspace $\cN_i$ of $\dotA$ in $\cH$, with
\begin{equation}
 \cN_i = \ker \big(\big(\dotA\big)\high{^*} - i I_{\cH}\big), \quad
\dim \, (\cN_i)=k\in \bbN \cup \{\infty\},     \lb{1.1}
\end{equation}
the Donoghue $m$-operator $M_{A,\cN_i}^{Do} (\dott) \in\cB(\cN_i)$ associated with the pair
 $(A,\cN_i)$  is given by
\begin{align}
\begin{split}
M_{A,\cN_i}^{Do}(z)&=P_{\cN_i} (zA + I_\cH)(A - z I_{\cH})^{-1}
P_{\cN_i}\big\vert_{\cN_i}      \\
&=zI_{\cN_i} + (z^2+1) P_{\cN_i} (A - z I_{\cH})^{-1}
P_{\cN_i} \big\vert_{\cN_i}\,, \quad  z\in \bbC\backslash \bbR,     \lb{1.2}
\end{split}
\end{align}
with $I_{\cN_i}$ the identity operator in $\cN_i$, and $P_{\cN_i}$ the orthogonal projection in
$\cH$ onto $\cN_i$. The special case $k=1$, was discussed in detail by Donoghue \cite{Do65}; for the case $k\in \bbN$ we refer to \cite{GT00}.

More generally, given a self-adjoint extension $A$ of $\dotA$ in $\cH$ and a closed,
linear subspace $\cN$ of $\cH$,
the Donoghue $m$-operator $M_{A,\cN}^{Do} (\dott) \in\cB(\cN)$ associated with the pair
 $(A,\cN)$  is defined by
\begin{align}
\begin{split}
M_{A,\cN}^{Do}(z)&=P_\cN (zA + I_\cH)(A - z I_{\cH})^{-1}
P_\cN\big\vert_\cN      \\
&=zI_\cN+(z^2+1)P_\cN(A - z I_{\cH})^{-1}
P_\cN\big\vert_\cN\,, \quad  z\in \bbC\backslash \bbR,     \lb{1.3}
\end{split}
\end{align}
with $I_\cN$ the identity operator in $\cN$ and $P_\cN$ the orthogonal projection in $\cH$
onto $\cN$. 

Since $M_{A,\cN}^{Do}(z)$ is analytic for $z\in \bbC\backslash\bbR$ and satisfies (see \cite[Theorem~5.3]{GNWZ19}) 
\begin{align}
& [\Im(z)]^{-1} \Im\big(M_{A,\cN}^{Do} (z)\big) \geq
2 \Big[\big(|z|^2 + 1\big) + \big[\big(|z|^2 -1\big)^2 + 4 (\Re(z))^2\big]^{1/2}\Big]^{-1} I_{\cN},    \no \\
& \hspace*{9.5cm}  z\in \bbC\backslash \bbR,    \lb{1.4}
\end{align}
$M_{A,\cN}^{Do}(\dott)$ is a $\cB(\cN)$-valued Nevanlinna--Herglotz function. Thus, $M_{A,\cN}^{Do}(\dott)$ 
admits the representation
\begin{equation}
M_{A,\cN}^{Do}(z) = \int_\bbR
d\Omega_{A,\cN}^{Do}(\lambda) \bigg[\f{1}{\lambda-z} -
\f{\lambda}{\lambda^2 + 1}\bigg], \quad z\in\bbC\backslash\bbR,    \lb{1.5}
\end{equation}
where the $\cB(\cN)$-valued measure $\Omega_{A,\cN}^{Do}(\dott)$ satisfies
\begin{align}
&\Omega_{A,\cN}^{Do}(\lambda)=(\lambda^2 + 1) (P_{\cN} E_A(\lambda)P_{\cN}\big\vert_{\cN}),
\lb{1.6} \\
&\int_\bbR d\Omega_{A,\cN}^{Do}(\lambda) \, (1+\lambda^2)^{-1}=I_{\cN},    \lb{1.7} \\
&\int_\bbR d(\xi,\Omega_{A,\cN}^{Do} (\lambda)\xi)_{\cN} = \infty \, \text{ for all } \,
 \xi \in \cN \backslash \{0\},    \lb{1.8}
\end{align}
with $E_A(\dott)$ the family of strongly right-continuous spectral projections of $A$ in $\cH$. 

Operators of the type $M_{A,\cN}^{Do}(\dott)$ and some of its variants have attracted
considerable attention in the literature. They appear to go back to Krein \cite{Kr46} (see also 
\cite{KL71}), Saakjan \cite{Sa65}, and independently, Donoghue \cite{Do65}. The interested reader can find a wealth of additional information in the context of \eqref{1.2}--\eqref{1.8} in 
 \cite{AB09}, \cite{AP04}, \cite{BL07}--\cite{BR16}, \cite{BMN02}, \cite{BGW09}--\cite{BGP08},
\cite{DHMdS09}--\cite{DMT88}, \cite{GKMT01}--\cite{GT00}, \cite{GWZ13b}, \cite{HMM13}, \cite{KO78}, \cite{LT77}, \cite{Ma92a}, \cite{MN11}, \cite{MN12}, \cite{Ma04}, \cite{Mo09}, \cite{Na89}--\cite{Na94}, \cite{Pa13}, \cite{Po04}, \cite{Ry07}, and the references therein.

Without going into further details (see \cite[Corollary~5.8]{GNWZ19} for details) we note that the prime reason for the interest in $M_{A,\cN_i}^{Do}(\dott)$ lies in the fundamental fact that the entire spectral information of $A$ contained in its family of spectral projections $E_A(\dott)$, is already encoded in the $\cB(\cN_i)$-valued measure $\Omega_{A,\cN_i}^{Do}(\dott)$  (including multiplicity properties of the spectrum of $A$) if and only if $\dotA$ is completely non-self-adjoint in $\cH$ (that is, if and only if $\dotA$ has no invariant subspace on which it is self-adjoint, see 
\cite[Lemma~5.4]{GNWZ19}). 

In the remainder of this paper, we will exclusively focus on the particular case
$\cN = \cN_i = \ker\big(\big(\dotA\big)\high{^*} - i I_{\cH}\big)$ and develop a self-contained approach to constructing Donoghue $m$-functions (resp., $2 \times 2$ matrices) for singular Sturm--Liouville operators on arbitrary intervals 
$(a,b) \subseteq \bbR$. More precisely, assuming the standard local integrability hypotheses on the coefficients $p, q,r$ (cf.\ Hypothesis \ref{h2.1}) we study all self-adjoint $L^2((a,b); rdx)$-realizations corresponding to the differential expression  
\begin{equation}
\tau=\f{1}{r(x)}\left[-\f{d}{dx}p(x)\f{d}{dx} + q(x)\right] \, \text{ for a.e.~$x\in(a,b) \subseteq \bbR$,}     \lb{1.9}
\end{equation} 
and systematically determine the underlying Donoghue $m$-functions in all cases where $\tau$ is in the limit circle case at least at one interval endpoint $a$ or $b$.  

Turning to the content of each section, we discuss the necessary background in connection with minimal $T_{min}$ and maximal $T_{max}$ operators, self-adjoint extensions, etc., corresponding to \eqref{1.9} in the underlying Hilbert space $L^2((a,b); rdx)$ in Section \ref{s2}. In particular, we recall the discussion of boundary values in terms of appropriate Wronskians, especially, in the case where $T_{min}$ is bounded from below (utilizing principal and nonprincipal solutions). Our strategy for the construction of Donoghue $m$-functions consists of first constructing them for the Friedrichs extension of $T_{min}$ and then employing Krein-type resolvent formulas to derive Donoghue $m$-functions for the remaining self-adjoint extensions of $T_{min}$. These Krein-type resolvent formulas use the Friedrichs extension as a reference operator and then explicitly characterize the resolvents of all the remaining self-adjoint extensions of $T_{min}$ in terms of the Friedrichs extension and the deficiency subspaces for $T_{min}$. Hence Sections \ref{s3} and \ref{s4} derive Krein-type resolvent formulas for singular Sturm--Liouville operators in the case where $\tau$ has one, respectively, two, interval endpoints in the limit circle case. Donoghue $m$-functions corresponding to the case where $\tau$ is in the limit circle case in precisely one interval endpoint are derived in Section \ref{s5}; the case where 
$\tau$ is in the limit circle case at $a$ and $b$ is treated in detail in Section \ref{s6}. We conclude this paper with an illustration of a generalized Bessel operator in Section \ref{s7} where $a=0$, $b\in(0,\infty)\cup\{\infty\}$, and $\tau$ takes on the explicit form, 
\begin{align}
\begin{split}
\tau_{\d,\nu,\g} = x^{-\d}\left[-\frac{d}{dx}x^\nu\frac{d}{dx} +\frac{(2+\d-\nu)^2\g^2-(1-\nu)^2}{4}x^{\nu-2}\right],\\
\d>-1,\; \nu<1,\; \g\geq0,\; x\in(0,b).     \lb{1.10} 
\end{split}
\end{align}

Finally, we comment on some of the basic notation used throughout this paper.  If $T$ is a linear operator mapping (a subspace of) a Hilbert space into another, then $\dom(T)$ and $\ker(T)$ denote the domain and kernel (i.e., null space) of $T$. The spectrum and resolvent set of a closed linear operator in a Hilbert space will be denoted by $\sigma(\dott)$ and $\rho(\dott)$, respectively. Moreover, we typically abbreviate $L^2((a,b); r dx)$ as $L^2_r((a,b))$ in various subscripts involving the identity operator $I_{L^2_r((a,b))}$ and the scalar product $(\dott,\dott)_{L^2_r((a,b))}$ (linear in the second argument) and associated norm $\|\dott\|_{L^2_r((a,b))}$ in $L^2((a,b); r dx)$.

\section{Some Background} \lb{s2}

In this section we briefly recall the basics of singular Sturm--Liouville operators. The material is standard and can be found, for instance, in \cite[Ch.~6]{BHS20}, \cite[Chs.~8, 9]{CL85}, \cite[Sects.~13.6, 13.9, 13.10]{DS88}, \cite{EGNT13}, \cite[Ch.~4]{GZ21}, \cite[Ch.~III]{JR76}, \cite[Ch.~V]{Na68}, \cite{NZ92}, \cite[Ch.~6]{Pe88}, \cite[Ch.~9]{Te14}, \cite[Sect.~8.3]{We80}, \cite[Ch.~13]{We03}, \cite[Chs.~4, 6--8]{Ze05}.

Throughout this section we make the following assumptions:

\begin{hypothesis} \lb{h2.1}
Let $(a,b) \subseteq \bbR$ and suppose that $p,q,r$ are $($Lebesgue\,$)$ measurable functions on $(a,b)$ 
such that the following items $(i)$--$(iii)$ hold: \\[1mm] 
$(i)$ \hspace*{1.1mm} $r>0$ a.e.~on $(a,b)$, $r\in\Ll$. \\[1mm] 
$(ii)$ \hspace*{.1mm} $p>0$ a.e.~on $(a,b)$, $1/p \in\Ll$. \\[1mm] 
$(iii)$ $q$ is real-valued a.e.~on $(a,b)$, $q\in\Ll$. 
\end{hypothesis}

Given Hypothesis \ref{h2.1}, we study Sturm--Liouville operators associated with the general, 
three-coefficient differential expression $\tau$ of the form
\begin{equation}
\tau=\f{1}{r(x)}\left[-\f{d}{dx}p(x)\f{d}{dx} + q(x)\right] \, \text{ for a.e.~$x\in(a,b) \subseteq \bbR$.}     \lb{2.1}
\end{equation} 

If $f\in AC_{loc}((a,b))$, then the quasi-derivative of $f$ is defined to be $f^{[1]}:=pf'$.  Moreover, the Wronskian of two functions $f,g\in AC_{loc}((a,b))$ is defined by
\begin{equation}
W(f,g)(x) = f(x)g^{[1]}(x)-f^{[1]}(x)g(x)\,\text{ for a.e.~$x\in (a,b)$}.
\end{equation}

The following result is useful for computing weighted integrals of products of solutions of $(\tau - z)y=0$: Assume Hypothesis \ref{h2.1} and let $z_1,z_2\in \bbC$ with $z_1\neq z_2$.  If $y(z_j,\dott)$ is a solution of $(\tau - z_j)y=0$, $j\in \{1,2\}$, then for all $a<\alpha<\beta<b$,
\begin{equation}
\int_{\alpha}^{\beta} r(x)dx\, y(z_1,x)y(z_2,x) = \frac{W(y(z_1,\dott),y(z_2,\dott))\big|_{\alpha}^{\beta}}{z_1-z_2}.     \lb{2.1b} 
\end{equation}

\begin{definition} \lb{d2.2}
Assume Hypothesis \ref{h2.1}. Given $\tau$ as in \eqref{2.1}, the \textit{maximal operator} $T_{max}$ in $\Lr$ associated with $\tau$ is defined by
\begin{align}
&T_{max} f = \tau f,    \no
\\
& f \in \dom(T_{max})=\big\{g\in\Lr \, \big| \,g,g^{[1]}\in\ACl;   \lb{2.2} \\ 
& \hspace*{6.3cm}  \tau g\in\Lr\big\}.   \no
\end{align}
The \textit{preminimal operator} $\dot T_{min} $ in $\Lr$ associated with $\tau$ is defined by 
\begin{align}
&\dot T_{min}  f = \tau f,   \no
\\
&f \in \dom \big(\dot T_{min}\big)=\big\{g\in\Lr \, \big| \, g,g^{[1]}\in\ACl;   \lb{2.3}
\\
&\hspace*{3.25cm} \supp \, (g)\subset(a,b) \text{ is compact; } \tau g\in\Lr\big\}.   \no
\end{align}

One can prove that $\dot T_{min} $ is closable, and one then defines the \textit{minimal operator} $T_{min}$ as the closure of $\dot T_{min} $.
\end{definition}

For $f,g\in \dom(T_{max})$, one can prove that the following limits exist:
\begin{equation}
W(f,g)(a) = \lim_{x\downarrow a} W(f,g)(x)\quad \text{and}\quad W(f,g)(b) = \lim_{x\uparrow b} W(f,g)(x).
\end{equation}
In addition, one can prove the following basic fact:

\begin{theorem} \lb{t2.3} 
Assume Hypothesis \ref{h2.1}. Then 
\begin{equation} 
\big(\dot T_{min}\big)^* = T_{max}, 
\end{equation} 
and hence $T_{max}$ is closed and $T_{min}=\ol{\dot T_{min} }$ is given by
\begin{align}
&T_{min} f = \tau f, \no
\\
&f \in \dom(T_{min})=\big\{g\in\Lr  \, \big| \,  g,g^{[1]}\in\ACl;       \lb{2.8} \\
& \qquad \text{for all } h\in\dom(T_{max}), \, W(h,g)(a)=0=W(h,g)(b); \, \tau g\in\Lr\big\}     \no \\
& \quad =\big\{g\in\dom(T_{max})  \, \big| \, W(h,g)(a)=0=W(h,g)(b) \, 
\text{for all } h\in\dom(T_{max}) \big\}.   \no 
\end{align}
Moreover, $\dot T_{min} $ is essentially self-adjoint if and only if\; $T_{max}$ is symmetric, and then $\ol{\dot T_{min} }=T_{min}=T_{max}$.
\end{theorem}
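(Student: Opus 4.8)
The plan is to build everything on the Lagrange--Green identity and then argue entirely through adjoints. First I would record that for $f,g$ with $f,f^{[1]},g,g^{[1]}\in\ACl$ and $\tau f,\tau g\in\Ll$, two integrations by parts (using that $p,q,r$ are real, so $\overline{\tau f}=\tau\overline f$) yield
\[
\int_\alpha^\beta \big[\overline{(\tau f)(x)}\,g(x)-\overline{f(x)}\,(\tau g)(x)\big]\,r(x)\,dx = W(\overline f,g)(\beta)-W(\overline f,g)(\alpha),
\]
for all $a<\alpha<\beta<b$. Since the boundary limits $W(\overline f,g)(a)$ and $W(\overline f,g)(b)$ exist for $f,g\in\dom(T_{max})$ (the fact recalled just before the statement), letting $\alpha\downarrow a$ and $\beta\uparrow b$ promotes this to a global identity on $(a,b)$.

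For the inclusion $T_{max}\subseteq\big(\dot T_{min}\big)^*$ I would take $g\in\dom(T_{max})$ and an arbitrary $f\in\dom\big(\dot T_{min}\big)$; because $\supp(f)\subset(a,b)$ is compact, both $f$ and $f^{[1]}$ vanish near $a$ and $b$, so the Wronskian boundary terms drop out and the identity collapses to $\big(\dot T_{min}f,g\big)=(f,\tau g)$. This exhibits $g\in\dom\big(\big(\dot T_{min}\big)^*\big)$ with $\big(\dot T_{min}\big)^*g=\tau g=T_{max}g$. The reverse inclusion $\big(\dot T_{min}\big)^*\subseteq T_{max}$ is the crux and the step I expect to be the main obstacle, since it is a regularity statement: given $g,h\in\Lr$ with $\big(\dot T_{min}f,g\big)=(f,h)$ for all compactly supported $f$, one must deduce $g,g^{[1]}\in\ACl$ and $\tau g=h$. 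I would handle this by variation of parameters: fix two linearly independent solutions $u_1,u_2$ of $\tau u=0$, construct a particular solution $G$ of $\tau G=h$ with $G,G^{[1]}\in\ACl$, and use the Lagrange identity to rewrite the defining relation as $\int_a^b \overline{(\tau f)(x)}\,(g-G)(x)\,r(x)\,dx=0$ for all such $f$. A du Bois-Reymond--type lemma---built on $1/p,q,r\in\Ll$, so that $\tau f$ ranges over exactly the compactly supported $\Lr$-functions satisfying two moment constraints against $u_1,u_2$---then forces $g-G$ to coincide a.e.\ with a solution of $\tau u=0$. Hence $g$ inherits the regularity of $G$ and of the $u_j$, giving $g\in\dom(T_{max})$ and $\tau g=h$.

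With $\big(\dot T_{min}\big)^*=T_{max}$ established, the remaining assertions are formal. An adjoint is always closed, so $T_{max}$ is closed; and since $\dot T_{min}$ is densely defined, $T_{min}=\ol{\dot T_{min}}=\big(\dot T_{min}\big)^{**}=(T_{max})^*$, while $T_{min}\subseteq T_{max}$ because $\dot T_{min}\subseteq T_{max}$ and $T_{max}$ is closed. To extract the explicit Wronskian description \eqref{2.8} of $\dom(T_{min})$, I would characterize $g\in\dom\big((T_{max})^*\big)$ through the global Lagrange identity: $g\in\dom(T_{min})$ iff $g\in\dom(T_{max})$ and $W(\overline h,g)(b)-W(\overline h,g)(a)=0$ for every $h\in\dom(T_{max})$. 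A cutoff/decoupling argument, selecting $h\in\dom(T_{max})$ supported near one endpoint only, separates this into $W(\overline h,g)(a)=0=W(\overline h,g)(b)$; and since $\dom(T_{max})$ is invariant under complex conjugation (the coefficients being real), the bar on $h$ may be dropped, matching \eqref{2.8}.

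Finally, for the essential self-adjointness equivalence I would note that $\dot T_{min}$ is essentially self-adjoint iff its closure is self-adjoint, i.e.\ iff $T_{min}=(T_{min})^*=\big(\dot T_{min}\big)^*=T_{max}$. On the other hand, $T_{max}$ symmetric means $T_{max}\subseteq(T_{max})^*=T_{min}\subseteq T_{max}$, forcing $T_{max}=T_{min}$; conversely $T_{min}=T_{max}$ gives $T_{max}=(T_{max})^*$, which is in particular symmetric. Thus the two conditions coincide, and when they hold all three operators agree: $\ol{\dot T_{min}}=T_{min}=T_{max}$.
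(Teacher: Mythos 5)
Your argument is correct and is precisely the classical proof of this result; note that the paper itself offers no proof of Theorem \ref{t2.3} but quotes it as standard background (citing \cite{Na68}, \cite{We80}, \cite{Ze05}, etc.), and your route --- Lagrange identity, the variation-of-parameters/du Bois-Reymond regularity step to get $\big(\dot T_{min}\big)^* \subseteq T_{max}$, then pure adjoint formalism plus endpoint decoupling --- is exactly the one in those references. The only place to be careful is the word ``cutoff'' in the decoupling step: under Hypothesis \ref{h2.1} multiplying $h\in\dom(T_{max})$ by a smooth cutoff need not keep it in $\dom(T_{max})$ (the term $r^{-1}(p\chi'h)'$ need not lie in $\Lr$ since $1/r$ is not assumed locally integrable), so one should instead invoke the Naimark patching lemma, which produces $h_a\in\dom(T_{max})$ agreeing with $h$ near $a$ and vanishing identically near $b$ by solving $\tau f=w$ on a compact subinterval with prescribed values of $f,f^{[1]}$ at its endpoints; with that substitution your proof is complete.
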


Regarding self-adjoint extensions of $T_{min}$ one has the following first result.

\begin{theorem} \lb{t2.4} 
Assume Hypothesis \ref{h2.1}.
An extension $\wti T$ of $\dot T_{min} $ or of $T_{min}=\ol{\dot T_{min} }$ is self-adjoint if and only if 
\begin{align}
& \wti T f = \tau f,    \\
& f \in \dom\big(\wti T\big) = \big\{g\in\dom(T_{max})  \, \big| \,  
W(f,g)(a)=W(f,g)(b) \text{ for all } f\in\dom\big(\wti T\big)\big\}.   \no 
\end{align}
\end{theorem}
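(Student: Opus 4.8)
The plan is to deduce the statement from the abstract theory of self-adjoint extensions of symmetric operators together with Green's formula (the Lagrange identity), taking Theorem~\ref{t2.3} as the starting point. First I would record the abstract adjoint relations: by Theorem~\ref{t2.3} one has $\big(\dot T_{min}\big)^* = T_{max}$, and since $T_{min} = \overline{\dot T_{min}}$ is closed and symmetric, it follows that $T_{min}^* = T_{max}$ and $T_{max}^* = T_{min}$. I would then note that the two formulations in the statement (extension of $\dot T_{min}$ versus of $T_{min}$) yield the same class of self-adjoint operators: any self-adjoint $\wti T \supseteq \dot T_{min}$ is closed, hence contains $\overline{\dot T_{min}} = T_{min}$, while every extension of $T_{min}$ trivially extends $\dot T_{min}$. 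Finally, any self-adjoint extension $\wti T$ obeys $T_{min} \subseteq \wti T = \wti T^* \subseteq T_{min}^* = T_{max}$; in particular $\wti T$ is a restriction of $T_{max}$, which gives the stated action $\wti T f = \tau f$ and reduces everything to the analysis of operators $\wti T$ with $T_{min} \subseteq \wti T \subseteq T_{max}$.

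Next I would establish Green's formula. Integrating the Lagrange identity $r\,(g\,\tau f - f\,\tau g) = \frac{d}{dx} W(f,g)$ from $\alpha$ to $\beta$ and letting $\alpha \downarrow a$, $\beta \uparrow b$ (the endpoint limits of $W(\dott,\dott)$ existing by the fact recorded just before Theorem~\ref{t2.3}) yields, for all $f,g \in \dom(T_{max})$,
\[
(T_{max} f, g)_{L^2_r((a,b))} - (f, T_{max} g)_{L^2_r((a,b))} = W(\overline f, g)(b) - W(\overline f, g)(a),
\]
where the conjugate on $f$ arises from the sesquilinearity of the inner product together with the reality of $p,q,r$ (so that $\overline{\tau f} = \tau \overline f$). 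This is the only spot where a conjugate must be tracked; it is a bookkeeping matter, and I will suppress it below in line with the notation of the statement.

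The heart of the matter is to compute the adjoint of an intermediate operator. Let $\wti T$ satisfy $T_{min} \subseteq \wti T \subseteq T_{max}$. Taking adjoints reverses the inclusions, so $T_{min} = T_{max}^* \subseteq \wti T^* \subseteq T_{min}^* = T_{max}$; thus $\wti T^*$ is again a restriction of $T_{max}$ and acts as $\tau$. By definition of the adjoint, $g \in \dom\big(\wti T^*\big)$ iff $g \in \dom(T_{max})$ and $(\wti T f, g) = (f, \tau g)$ for all $f \in \dom\big(\wti T\big)$, and by Green's formula this difference equals the boundary form. Hence
\[
\dom\big(\wti T^*\big) = \big\{ g \in \dom(T_{max}) \,\big|\, W(f,g)(a) = W(f,g)(b) \text{ for all } f \in \dom\big(\wti T\big)\big\}.
\]
Since $\wti T$ and $\wti T^*$ both act as $\tau$, self-adjointness $\wti T = \wti T^*$ is equivalent to the equality of their domains, which is precisely the displayed characterization; this settles both implications at once. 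For the converse direction one must also check that the posited operator satisfies $T_{min} \subseteq \wti T$, which follows because every $g \in \dom(T_{min})$ has $W(h,g)(a) = 0 = W(h,g)(b)$ for all $h \in \dom(T_{max})$ by Theorem~\ref{t2.3}, so $g$ meets the boundary condition.

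I expect the main obstacle to be structural bookkeeping rather than any hard estimate: justifying that a self-adjoint extension is automatically squeezed between $T_{min}$ and $T_{max}$ (so that the self-referential domain equation is well posed and the adjoint computation applies), and correctly handling the complex conjugate in the boundary form so that the displayed condition matches the sesquilinear Green's formula. The analytic input—existence of the Wronskian endpoint limits and the Lagrange identity—is already available from the material preceding Theorem~\ref{t2.3}, so no new integrability arguments are needed.
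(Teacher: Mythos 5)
You should first know that the paper offers no proof of Theorem \ref{t2.4}: it is quoted as standard background at the start of Section \ref{s2} with references to the literature, so there is no in-paper argument to compare against. Your route --- squeeze any self-adjoint extension between $T_{min}$ and $T_{max}$ via $T_{min}\subseteq \wti T=\wti T^{*}\subseteq T_{min}^{*}=T_{max}$, compute $\dom\big(\wti T^{*}\big)$ from Green's formula, and identify self-adjointness with equality of the two domains --- is the canonical one, and structurally it is complete (including the check, needed for the converse, that the candidate domain contains $\dom(T_{min})$).

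The one point that does not survive scrutiny is your assertion that the complex conjugate in the boundary form ``is a bookkeeping matter'' that may be suppressed. It is not, and it is exactly where your (correct) computation and the literal wording of the theorem part ways. Green's formula gives $(\tau f,g)-(f,\tau g)=W(\ol{f},g)(b)-W(\ol{f},g)(a)$, so $\dom\big(\wti T^{*}\big)$ is cut out by the \emph{sesquilinear} condition $W(\ol{f},g)(a)=W(\ol{f},g)(b)$ for all $f\in\dom\big(\wti T\big)$. This coincides with the bilinear condition $W(f,g)(a)=W(f,g)(b)$ in the statement only when $\dom\big(\wti T\big)$ is invariant under complex conjugation, which fails precisely for the genuinely complex coupled boundary conditions of Theorem \ref{t2.8}\,$(ii)$. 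Concretely, in the regular case the self-adjoint extension with $g(b)=ig(a)$, $g^{[1]}(b)=ig^{[1]}(a)$ (i.e., $\varphi=\pi/2$ and $R$ the identity matrix) satisfies $W(f,g)(b)=-W(f,g)(a)$ on its domain, so it violates the non-conjugated identity; conversely, the restriction of $T_{max}$ determined by $g^{[1]}(a)=ig(a)$, $g(b)=0$ does satisfy the non-conjugated domain identity yet is not even symmetric, since $(\tau f,f)-(f,\tau f)=-2i\,|f(a)|^{2}$ on its domain. Hence the displayed condition must be read with $W(\ol{f},g)$ --- equivalently with the Lagrange bracket \eqref{3.9b}, consistent with the explicit conjugates $W(\ol{v_1},v_2)$ in Theorem \ref{t2.8} --- and with that reading your argument is correct and complete; but as a proof of the literal non-conjugated statement it cannot close, because that statement fails in both directions.
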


The celebrated Weyl alternative then can be stated as follows:

\begin{theorem}[Weyl's Alternative] \lb{t2.5} ${}$ \\
Assume Hypothesis \ref{h2.1}. Then the following alternative holds: Either \\[1mm] 
$(i)$ for every $z\in\bbC$, all solutions $u$ of $(\tau-z)u=0$ are in $\Lr$ near $b$ 
$($resp., near $a$$)$, \\[1mm] 
or, \\[1mm] 
$(ii)$ for every $z\in\bbC$, there exists at least one solution $u$ of $(\tau-z)u=0$ which is not in $\Lr$ near $b$ $($resp., near $a$$)$. In this case, for each $z\in\bbC\bs\bbR$, there exists precisely one solution $u_b$ $($resp., $u_a$$)$ of $(\tau-z)u=0$ $($up to constant multiples$)$ which lies in $\Lr$ near $b$ $($resp., near $a$$)$. 
\end{theorem}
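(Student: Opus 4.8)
The plan is to reduce the full alternative to two essentially independent assertions: first, that the property ``all solutions of $(\tau-z)u=0$ lie in $\Lr$ near $b$'' is independent of the spectral parameter $z$; and second, that for a single fixed $z\in\bbC\bs\bbR$ the admissible Weyl solutions sweep out a nested family of disks whose intersection is either a genuine disk or a single point. I treat the endpoint $b$ throughout; the argument at $a$ is verbatim after reflecting the interval.

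First I would prove the $z$-independence. Suppose all solutions of $(\tau-z_0)u=0$ lie in $\Lr$ near $b$ for one $z_0\in\bbC$, and fix a fundamental system $u_1,u_2$ of $(\tau-z_0)u=0$ normalized by $W(u_1,u_2)\equiv 1$. For arbitrary $z$, any solution $u$ of $(\tau-z)u=0$ solves the inhomogeneous equation $(\tau-z_0)u=(z-z_0)u$, so by variation of parameters $u$ satisfies a Volterra integral equation with kernel $[u_1(x)u_2(t)-u_2(x)u_1(t)]\,r(t)$ and inhomogeneous factor $(z-z_0)$, taken over $(c,x)$ for some $c$ near $b$. Using $u_1,u_2\in\Lr$ near $b$, the Cauchy--Schwarz inequality bounds the associated integral operator on $L^2((c,b);r\,dx)$ by $2|z-z_0|\,\|u_1\|\,\|u_2\|$, a quantity that tends to $0$ as $c\uparrow b$; choosing $c$ so close to $b$ that this operator is a contraction forces $u\in\Lr$ near $b$. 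Hence case $(i)$, if it holds for one $z$, holds for all $z\in\bbC$, and its negation likewise propagates to all $z$, which yields the first clause of $(ii)$.

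The heart of the matter is the nested-circle construction for a fixed $z\in\bbC\bs\bbR$ (for definiteness take $\Im(z)>0$; the case $\Im(z)<0$ is symmetric). I would pick $c\in(a,b)$, take solutions $\theta(z,\dott),\phi(z,\dott)$ of $(\tau-z)u=0$ with $W(\theta,\phi)\equiv 1$ and real initial data at $c$, and for each $\beta\in(c,b)$ impose a real (self-adjoint) boundary condition at $\beta$ on $\psi=\theta+m\phi$. As the boundary angle varies, the identity \eqref{2.1b} applied with $z_1=z$, $z_2=\bar z$ --- which converts $\int_c^\beta r|\psi|^2\,dx$ into the boundary Wronskian $W(\psi,\bar\psi)\big|_c^\beta/(z-\bar z)$ --- shows that the admissible $m$ trace out a circle $C_\beta$ of radius $\big(2\,\Im(z)\int_c^\beta r|\phi|^2\,dx\big)^{-1}$, and that the corresponding closed disk is characterized by $\int_c^\beta r|\theta+m\phi|^2\,dx\le \Im(m)/\Im(z)$. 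Monotonicity of $\int_c^\beta r|\phi|^2\,dx$ in $\beta$ makes the closed disks $\ol{D_\beta}$ nested and decreasing, so $D_b:=\bigcap_{\beta}\ol{D_\beta}$ is either a disk of positive radius or a single point. In the former (limit circle) case the radius stays bounded below, forcing $\int_c^b r|\phi|^2\,dx<\infty$; together with any $m\in D_b$ giving $\theta+m\phi\in\Lr$, both $\phi$ and some $\theta+m\phi$ --- hence, by spanning, every solution --- lie in $\Lr$ near $b$, which is case $(i)$. In the latter (limit point) case $\int_c^b r|\phi|^2\,dx=\infty$, so $\phi\notin\Lr$ near $b$, while the unique limit point $m_b$ lies in $\ol{D_\beta}$ for every $\beta$, whence $\int_c^\beta r|\theta+m_b\phi|^2\,dx\le \Im(m_b)/\Im(z)$ uniformly in $\beta$ and therefore $\psi_b:=\theta+m_b\phi\in\Lr$ near $b$; writing a general solution as $\alpha\,\psi_b+(\be'-\alpha m_b)\phi$ then shows $\psi_b$ is, up to constant multiples, the only $\Lr$ solution, which is case $(ii)$.

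I expect the main obstacle to be the bookkeeping in the Weyl-disk step: correctly extracting the center and radius of $C_\beta$ from the boundary Wronskian via \eqref{2.1b}, verifying the nesting rigorously, and --- most delicately --- promoting membership of $m_b$ in every $\ol{D_\beta}$ to square-integrability of $\psi_b$ on all of $(c,b)$ through the uniform bound $\int_c^\beta r|\theta+m_b\phi|^2\,dx\le \Im(m_b)/\Im(z)$. Once these are in place, combining the single-$z$ dichotomy with the $z$-independence of the limit circle property established in the first step delivers the stated alternative in full, including the uniformity over $z\in\bbC$ in $(i)$ and in the first clause of $(ii)$, and the ``precisely one solution up to constant multiples'' count for $z\in\bbC\bs\bbR$.
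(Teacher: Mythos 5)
Your proposal is correct in outline and is precisely the classical argument: the paper itself states Theorem \ref{t2.5} without proof, as standard background material drawn from the cited references (e.g., \cite[Ch.~9]{Te14}, \cite[Sect.~8.3]{We80}, \cite{Ze05}), and your two-step structure --- the $z$-independence of the ``all solutions in $\Lr$ near $b$'' property via the Volterra/variation-of-parameters contraction estimate $2|z-z_0|\,\|u_1\|\,\|u_2\|<1$ on a sufficiently small interval $(c,b)$, followed by the nested Weyl-disk dichotomy at a fixed $z$ with $\Im(z)\neq 0$ --- is exactly the proof given in those sources. The details you flag as delicate (the disk characterization $\int_c^\beta r|\theta+m\phi|^2\,dx\le \Im(m)/\Im(z)$, the radius $\big(2\,\Im(z)\int_c^\beta r|\phi|^2\,dx\big)^{-1}$, the nesting, and the passage from membership of $m_b$ in every $\ol{D_\beta}$ to $\psi_b\in\Lr$ near $b$ by the uniform bound and monotone convergence) all check out, so there is no gap to report.
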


This yields the limit circle/limit point classification of $\tau$ at an interval endpoint as follows. 

\begin{definition} \lb{d2.6} 
Assume Hypothesis \ref{h2.1}. \\[1mm]  
In case $(i)$ in Theorem \ref{t2.5}, $\tau$ is said to be in the \textit{limit circle case} at $b$ $($resp., $a$$)$. $($Frequently, $\tau$ is then called \textit{quasi-regular} at $b$ $($resp., $a$$)$.$)$
\\[1mm] 
In case $(ii)$ in Theorem \ref{t2.5}, $\tau$ is said to be in the \textit{limit point case} at $b$ $($resp., $a$$)$. \\[1mm]
If $\tau$ is in the limit circle case at $a$ and $b$ then $\tau$ is also called \textit{quasi-regular} on $(a,b)$. 
\end{definition}

The next result links self-adjointness of $T_{min}$ (resp., $T_{max}$) and the limit point property of $\tau$ at both endpoints.  Here, and throughout, we shall employ the notation
\begin{equation}
\cN_z = \ker\big(T_{max}-zI_{L_r^2((a,b))}\big),\quad z\in \bbC.
\end{equation}

\begin{theorem} \lb{t2.7}
Assume Hypothesis~\ref{h2.1}, then the following items $(i)$ and $(ii)$ hold: \\[1mm] 
$(i)$ If $\tau$ is in the limit point case at $a$ $($resp., $b$$)$, then 
\begin{equation} 
W(f,g)(a)=0 \, \text{$($resp., $W(f,g)(b)=0$$)$ for all $f,g\in\dom(T_{max})$.} 
\end{equation} 
$(ii)$ Let $T_{min}=\ol{\dot T_{min} }$. Then
\begin{align}
\begin{split}
n_\pm(T_{min}) &= \dim\big(\cN_{\pm i}\big)    \\
& = \begin{cases}
2 & \text{if $\tau$ is in the limit circle case at $a$ and $b$,}\\
1 & \text{if $\tau$ is in the limit circle case at $a$} \\
& \text{and in the limit point case at $b$, or vice versa,}\\
0 & \text{if $\tau$ is in the limit point case at $a$ and $b$}.
\end{cases}
\end{split} 
\end{align}
In particular, $T_{min} = T_{max}$ is self-adjoint if and only if $\tau$ is in the limit point case at $a$ and $b$. 
\end{theorem}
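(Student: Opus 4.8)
The plan is to establish part $(i)$ first and then to deduce part $(ii)$, together with the final self-adjointness assertion, from $(i)$ combined with Weyl's alternative (Theorem~\ref{t2.5}). Throughout I use the Lagrange identity in the form $W(f,g)(\beta)-W(f,g)(\alpha)=\int_\alpha^\beta r\,[g\,\tau f-f\,\tau g]\,dx$, valid for $f,g\in\dom(T_{max})$: since $f,g,\tau f,\tau g\in\Lr$, the integrand is integrable near each endpoint, which is precisely what guarantees existence of the boundary Wronskians $W(f,g)(a)$, $W(f,g)(b)$ recorded just before Theorem~\ref{t2.3}.

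For part $(i)$, assume $\tau$ is in the limit point case at $b$ (the case of $a$ being symmetric). Since $W(f,g)(b)$ depends only on the germ of $f,g$ at $b$, I localize to a half-line $(c,b)$, $c\in(a,b)$. Fix a fundamental system $u_1,u_2$ of $(\tau-i)y=0$ with $W(u_1,u_2)\equiv1$; by Theorem~\ref{t2.5} I may arrange $u_1\in\Lr$ near $b$ and $u_2\notin\Lr$ near $b$. For $f\in\dom(T_{max})$ set $k_f:=(\tau-i)f\in\Lr$. Variation of parameters represents $f=P_f u_1+Q_f u_2$ near $b$, and one checks that the source terms cancel in the quasi-derivative, so that $f^{[1]}=P_f u_1^{[1]}+Q_f u_2^{[1]}$ as well; here $Q_f(x)=W(u_1,f)(x)=\beta_f-\int_c^x u_1\,r k_f\,dt$ and $P_f(x)=-W(u_2,f)(x)=\alpha_f+\int_c^x u_2\,r k_f\,dt$. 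A direct computation using $W(u_1,u_2)\equiv1$ then gives the bilinear identity $W(f,g)(x)=P_f(x)Q_g(x)-Q_f(x)P_g(x)$.

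The crux is to show that the limit point hypothesis forces the coefficient $Q_f(x)=W(u_1,f)(x)$ of the non-$L^2$ solution $u_2$ to tend to $0$ as $x\uparrow b$: heuristically, since $f\in\Lr$ near $b$ while $u_2\notin\Lr$ near $b$, the $u_2$-component of $f$ cannot persist at $b$. Granting $\lim_{x\uparrow b}Q_f(x)=\lim_{x\uparrow b}Q_g(x)=0$ and controlling the mixed products $P_fQ_g$, $Q_fP_g$ at $b$, the identity above yields $W(f,g)(b)=0$ for all $f,g\in\dom(T_{max})$. This final estimate is the main obstacle: the increment $\int_x^b u_1\,r k_f\,dt$ of $Q_f$ is readily bounded by Cauchy--Schwarz (both $u_1$ and $k_f$ are square-integrable near $b$), but $P_f=\alpha_f+\int_c^x u_2\,r k_f\,dt$ need not even be bounded, as $u_2\notin\Lr$; hence vanishing of $P_fQ_g$ at $b$ requires a careful tail estimate that genuinely invokes $u_2\notin\Lr$ near $b$ (equivalently, the uniqueness up to scalars of the $L^2$ solution provided by Theorem~\ref{t2.5}) and cannot be obtained by rank or dimension counting alone.

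With $(i)$ in hand, part $(ii)$ is essentially linear algebra on the two-dimensional solution space of $(\tau\mp i)y=0$. Indeed $\cN_{\pm i}=\ker(T_{max}\mp iI)$ consists exactly of the solutions lying in $\Lr$, that is, the solutions that are simultaneously in $\Lr$ near $a$ and near $b$; by Theorem~\ref{t2.5} the spaces of solutions square-integrable near $a$ (resp.\ near $b$) have dimension $d_a$ (resp.\ $d_b$), equal to $2$ in the limit circle case and to $1$ in the limit point case. Intersecting, one obtains $\dim(\cN_{\pm i})=2$ when both endpoints are limit circle and $=1$ in the two mixed cases. In the remaining case, limit point at both $a$ and $b$, the two one-dimensional spaces could a priori coincide; but a common nonzero solution $u$ would satisfy $u\in\dom(T_{max})$ and $\tau u=iu$, so the Lagrange identity applied to $\bar u,u$ gives $-2i\|u\|_{\Lr}^2=W(\bar u,u)(b)-W(\bar u,u)(a)=0$ by part~$(i)$, forcing $u=0$; thus $\dim(\cN_{\pm i})=0$. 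This establishes the stated table, and since a closed symmetric operator is self-adjoint precisely when its deficiency indices vanish, while $T_{max}=(\dot T_{min})^*=(T_{min})^*$ by Theorem~\ref{t2.3}, it follows that $T_{min}=T_{max}$ is self-adjoint if and only if $n_\pm(T_{min})=0$, i.e.\ if and only if $\tau$ is in the limit point case at both $a$ and $b$.
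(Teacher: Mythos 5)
The paper itself offers no proof of Theorem \ref{t2.7}: it is presented as standard background material with pointers to the literature (e.g., \cite[Ch.~9]{Te14}, \cite[Sect.~8.3]{We80}, \cite{Ze05}), so your proposal has to be judged on its own completeness. Part $(ii)$ of your argument is fine once $(i)$ is available: the identification of $\cN_{\pm i}$ with the globally square-integrable solutions, the intersection count via Theorem \ref{t2.5}, the Lagrange-identity argument ruling out a common $L^2$ solution in the doubly limit point case, and the passage to deficiency indices via $(T_{min})^*=T_{max}$ from Theorem \ref{t2.3} are all correct.

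The genuine gap is in part $(i)$. Your reduction to the bilinear identity $W(f,g)=P_fQ_g-Q_fP_g$ is correct, but the entire content of the theorem is then concentrated in the two claims you introduce with ``granting'': that $Q_f(x)=W(u_1,f)(x)\to 0$ as $x\uparrow b$ and that the mixed products $P_fQ_g$ vanish at $b$ despite $P_f$ being possibly unbounded. The first of these is itself an instance of the statement being proved (it is $W(u_1,f)(b)=0$ for the $L^2$ solution $u_1$, which extends to an element of $\dom(T_{max})$), so as written the argument is circular at its core, and you yourself flag the second as ``the main obstacle'' without resolving it. Moreover, your parenthetical claim that the result ``cannot be obtained by rank or dimension counting alone'' points away from the standard route the paper's references take: working on $(c,b)$ with $c$ a regular endpoint, the limit point hypothesis at $b$ gives deficiency indices $(1,1)$ for the minimal operator there, hence $\dim\big(\dom(T_{max}^{(c,b)})/\dom(T_{min}^{(c,b)})\big)=2$; since the subspace $\{f\,|\,f(c)=f^{[1]}(c)=0\}$ also has codimension $2$ and contains $\dom(T_{min}^{(c,b)})$, the two coincide, and the characterization \eqref{2.8} then forces $W(f,g)(b)=0$ for all $f,g\in\dom(T_{max})$ after subtracting a compactly supported correction matching the data at $c$. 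Either supply that codimension argument or carry out the tail estimates you defer; as it stands the key step is asserted, not proved.
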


All self-adjoint extensions of $T_{min}$ are then described as follows:

\begin{theorem} \lb{t2.8}
Assume Hypothesis \ref{h2.1} and that $\tau$ is in the limit circle case at $a$ and $b$ $($i.e., $\tau$ is quasi-regular 
on $(a,b)$$)$. In addition, assume that 
$v_j \in \dom(T_{max})$, $j=1,2$, satisfy 
\begin{equation}
W(\ol{v_1}, v_2)(a) = W(\ol{v_1}, v_2)(b) = 1, \quad W(\ol{v_j}, v_j)(a) = W(\ol{v_j}, v_j)(b) = 0, \; j= 1,2.  
\end{equation}
$($E.g., real-valued solutions $v_j$, $j=1,2$, of $(\tau - \lambda) u = 0$ with $\lambda \in \bbR$, such that 
$W(v_1,v_2) = 1$.$)$ For $g\in\dom(T_{max})$ we introduce the generalized boundary values 
\begin{align}
\begin{split} 
\wti g_1(a) &= - W(v_2, g)(a), \quad \wti g_1(b) = - W(v_2, g)(b),    \\
\wti g_2(a) &= W(v_1, g)(a), \quad \;\,\,\, \wti g_2(b) = W(v_1, g)(b).   \lb{2.10}
\end{split} 
\end{align}
Then the following items $(i)$--$(iii)$ hold: \\[1mm] 
$(i)$ All self-adjoint extensions $T_{\al,\be}$ of $T_{min}$ with separated boundary conditions are of the form
\begin{align}
& T_{\al,\be} f = \tau f, \quad \al,\be\in[0,\pi),  \lb{2.11} \\
&f\in \dom(T_{\alpha,\beta}) = \bigg\{g\in \dom(T_{max})\,\bigg|\,
\begin{aligned}
\cos(\alpha)\wti g_1(a)+\sin(\alpha)\wti g_2(a)&=0,\no\\
\cos(\beta)\wti g_1(b)+\sin(\beta)\wti g_2(b)&=0
\end{aligned}
\bigg\}.\no 
\end{align}
$(ii)$ All self-adjoint extensions $T_{\varphi,R}$ of $T_{min}$ with coupled boundary conditions are of the type
\begin{align}
\begin{split} 
& T_{\varphi,R} f = \tau f,    \\
& f \in \dom(T_{\varphi,R})=\bigg\{g\in\dom(T_{max}) \, \bigg| \begin{pmatrix} \wti g_1(b)\\ \wti g_2(b)\end{pmatrix} 
= e^{i\varphi}R \begin{pmatrix}
\wti g_1(a)\\ \wti g_2(a)\end{pmatrix} \bigg\}, \lb{2.12}
\end{split}
\end{align}
where $\varphi\in[0,2\pi)$, and $R$ is a real $2\times2$ matrix with $\det(R)=1$ 
$($i.e., $R \in SL(2,\bbR)$$)$.  \\[1mm] 
$(iii)$ Every self-adjoint extension of $T_{min}$ is either of type $(i)$ $($i.e., separated\,$)$ or of type 
$(ii)$ $($i.e., coupled\,$)$.
\end{theorem}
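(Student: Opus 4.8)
The plan is to reduce the classification of self-adjoint extensions to a finite-dimensional question about a skew-Hermitian form on $\bbC^4$, and then to classify the associated Lagrangian subspaces by elementary linear algebra. Throughout I use that, by Theorem \ref{t2.7}, $n_\pm(T_{min})=2$ here, so that the extension theory is governed by a four-dimensional boundary space.

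First I would record Green's formula: integrating by parts twice in $\tau$ and passing to the limits $\alpha\downarrow a$, $\beta\uparrow b$ (which exist by the remark preceding Theorem \ref{t2.3}), one obtains, for all $f,g\in\dom(T_{max})$,
\[
(T_{max}f,g)_{L^2_r((a,b))}-(f,T_{max}g)_{L^2_r((a,b))}=W(\ol f,g)(b)-W(\ol f,g)(a).
\]
This sesquilinear boundary form is skew-Hermitian, and by the Wronskian characterization \eqref{2.8} of $\dom(T_{min})$ it vanishes as soon as $f$ or $g$ lies in $\dom(T_{min})$. Next I would express the endpoint Wronskians through the generalized boundary values \eqref{2.10}. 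Since $\tau$ is in the limit circle case at both endpoints, every solution of $(\tau-z)u=0$ lies in $\Lr$ near $a$ and near $b$; patching local solutions with a smooth cutoff shows that the boundary map
\[
\Phi\colon\dom(T_{max})\to\bbC^4,\qquad \Phi(g)=\big(\wti g_1(a),\wti g_2(a),\wti g_1(b),\wti g_2(b)\big),
\]
is surjective with $\ker(\Phi)=\dom(T_{min})$ (the kernel claim is immediate from \eqref{2.8} upon taking $h=v_1,v_2$). Using the normalization $W(\ol{v_1},v_2)(c)=1$, $W(\ol{v_j},v_j)(c)=0$, a short computation (cleanest when $v_1,v_2$ are real solutions with $W(v_1,v_2)=1$, so that $v_j$ has boundary coordinate $e_j$ at each $c\in\{a,b\}$) shows that in these coordinates the boundary form becomes the nondegenerate skew-Hermitian form
\[
\omega(\Phi f,\Phi g)=\big[\ol{\wti f_1(b)}\,\wti g_2(b)-\ol{\wti f_2(b)}\,\wti g_1(b)\big]-\big[\ol{\wti f_1(a)}\,\wti g_2(a)-\ol{\wti f_2(a)}\,\wti g_1(a)\big]
\]
on $\bbC^4=\bbC^2_a\oplus\bbC^2_b$, where $\bbC^2_c$ collects the boundary data at $c$. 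By the standard von Neumann/Glazman--Krein--Naimark correspondence, the self-adjoint extensions $T$ with $T_{min}\subseteq T\subseteq T_{max}$ are then in bijection with the Lagrangian (maximal $\omega$-isotropic, hence two-dimensional) subspaces $\cD=\Phi(\dom(T))$ of $\bbC^4$.

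The heart of the argument is the classification of these $\cD$, organized by $d_a:=\dim\pi_a(\cD)$, where $\pi_a,\pi_b$ denote the projections onto $\bbC^2_a,\bbC^2_b$. If $d_a=2$, then $\cD$ is the graph $\{(\xi,M\xi):\xi\in\bbC^2\}$ of an invertible $M$, and $\omega$-isotropy becomes $M^*JM=J$ with $J=\begin{pmatrix}0&1\\-1&0\end{pmatrix}$; normalizing the phase by writing $\det(M)=e^{2i\varphi}$ and invoking the universal identity $N^TJN=(\det N)J$ for $2\times2$ matrices shows that $N:=e^{-i\varphi}M$ satisfies both $N^*JN=J$ and $N^TJN=J$, whence $N^*=N^T$, i.e.\ $N$ is real with $\det N=1$. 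Thus $M=e^{i\varphi}R$ with $R\in SL(2,\bbR)$, which is precisely the coupled case \eqref{2.12}. If $d_a=1$, the line $\cD\cap\bbC^2_b$ is $\omega_b$-isotropic, hence equals its own (one-dimensional) symplectic orthogonal, forcing $\pi_b(\cD)$ into it; consequently $\cD=(\cD\cap\bbC^2_a)\oplus(\cD\cap\bbC^2_b)$ splits, and isotropy of a line in $(\bbC^2,\omega_c)$ for the skew-Hermitian form forces $\ol{\xi_1}\xi_2\in\bbR$, i.e.\ the annihilating covector is a real multiple of $(\cos\gamma,\sin\gamma)$, yielding the separated case \eqref{2.11} with real parameters $\alpha,\beta\in[0,\pi)$. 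Finally $d_a=0$ is impossible, since it would make $\bbC^2_b$ isotropic, contradicting nondegeneracy of $\omega_b$. This simultaneously establishes $(i)$, $(ii)$, and the dichotomy $(iii)$.

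I expect the main obstacle to be the rigorous passage from operators to subspaces rather than the closing linear algebra: namely, proving surjectivity of $\Phi$ onto $\bbC^4$ and that every Lagrangian subspace is realized as the boundary image of a genuine self-adjoint operator domain. Quasi-regularity at both endpoints is exactly what makes this step work, since it guarantees that all solutions are square-integrable near each endpoint and hence that the four boundary data may be prescribed independently. Once this is in place, the remaining classification, while requiring careful sign and phase bookkeeping in the reduction of the boundary form to $\omega$, is routine.
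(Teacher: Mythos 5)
The paper itself gives no proof of Theorem \ref{t2.8}: it is quoted as standard background and referred to the literature (e.g., \cite{BHS20}, \cite{Ze05}), where the argument is exactly the Glazman--Krein--Naimark reduction you describe, namely passing to the four-dimensional boundary space $\bbC^2_a\oplus\bbC^2_b$ with the skew-Hermitian boundary form and classifying its Lagrangian subspaces as graphs (coupled) versus split sums of real isotropic lines (separated). Your proposal is correct and follows this standard route; the only step you assert without justification is the invertibility of $M$ in the case $d_a=2$, which does follow, but from isotropy rather than from $d_a=2$ alone: if $M\xi=0$, then the Lagrangian condition gives $\omega_a(\xi,\eta)=0$ for all $\eta\in\bbC^2$, whence $\xi=0$ by nondegeneracy of $\omega_a$ --- this is worth making explicit, as is the surjectivity of the boundary map $\Phi$ that you correctly identify as the analytic heart of the matter.
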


\begin{remark} \lb{r2.9}
$(i)$ If $\tau$ is in the limit point case at one endpoint, say, at the endpoint $b$, one omits the corresponding boundary condition involving $\beta \in [0, \pi)$ at $b$ in \eqref{2.11} to obtain all self-adjoint extensions $T_{\alpha}$ of 
$T_{min}$, indexed by $\alpha \in [0, \pi)$. (In this case item $(iii)$ in Theorem \ref{t2.8} is vacuous.) In the case where $\tau$ is in the limit point case at both endpoints, all boundary values and boundary conditions become superfluous as in this case $T_{min} = T_{max}$ is self-adjoint. \\[1mm] 
$(ii)$ In the special case where $\tau$ is regular on the finite interval $[a,b]$, choose $v_j \in \dom(T_{max})$, $j=1,2$, 
such that 
\begin{align}
v_1(x) = \begin{cases} \theta_0(\lambda,x,a), & \text{for $x$ near $a$}, \\
\theta_0(\lambda,x,b), & \text{for $x$ near $b$},  \end{cases}   \quad 
v_2(x) = \begin{cases} \phi_0(\lambda,x,a), & \text{for $x$ near $a$}, \\
\phi_0(\lambda,x,b), & \text{for $x$ near $b$},  \end{cases}   \lb{2.13}
\end{align} 
where $\phi_0(\lambda,\, \cdot \,,d)$, $\theta_0(\lambda,\, \cdot \,,d)$, $d \in \{a,b\}$, are real-valued solutions of $(\tau - \lambda) u = 0$, $\lambda \in \bbR$, satisfying the boundary conditions 
\begin{align}
\begin{split} 
& \phi_0(\lambda,a,a) = \theta_0^{[1]}(\lambda,a,a) = 0, \quad \theta_0(\lambda,a,a) = \phi_0^{[1]}(\lambda,a,a) = 1, \\ 
& \phi_0(\lambda,b,b) = \theta_0^{[1]}(\lambda,b,b) = 0, \quad \; \theta_0(\lambda,b,b) = \phi_0^{[1]}(\lambda,b,b) = 1. 
\lb{2.14}
\end{split} 
\end{align} 
Then one verifies that
\begin{align}
\wti g_1 (a) = g(a), \quad \wti g_1 (b) = g(b), \quad \wti g_2 (a) = g^{[1]}(a), \quad \wti g_2 (b) = g^{[1]}(b),   \lb{2.15} 
\end{align}
and hence Theorem \ref{t2.8} recovers the well-known special regular case. \\[1mm]
$(iii)$ In connection with \eqref{2.10}, an explicit calculation demonstrates that for 
$g, h \in \dom(T_{max})$,
\begin{equation}
\wti g_1(d) \wti h_2(d) - \wti g_2(d) \wti h_1(d) = W(g,h)(d), \quad d \in \{a,b\},   \lb{2.16}
\end{equation} 
interpreted in the sense that either side in \eqref{2.16} has a finite limit as $d \downarrow a$ and $d \uparrow b$. 
Of course, for \eqref{2.16} to hold at $d \in \{a,b\}$, it suffices that $g$ and $h$ lie locally in $\dom(T_{max})$ near $x=d$. \\[1mm]
$(iv)$ Clearly, $\wti g_1, \wti g_2$ depend on the choice of $v_j$, $j=1,2$, and a more precise notation would indicate this as $\wti g_{1,v_2}, \wti g_{2,v_1}$, etc. \\[1mm]
$(v)$ One can supplement the characterization \eqref{2.8} of $\dom(T_{min})$ by
\begin{align}
\begin{split} 
&T_{min} f = \tau f,      \lb{2.17} \\
&f \in \dom(T_{min}) = \big\{g\in\dom(T_{max})  \, \big| \, \wti g_1 (a) = \wti g_2 (a) = \wti g_1 (b) = \wti g_2 (b) = 0\big\}.   \end{split} 
\end{align} 
\hfill $\diamond$
\end{remark} 

Next, we determine when two self-adjoint extensions of $T_{min}$ are {\it relatively prime} with respect to $T_{min}$.

\begin{definition}
If $T$ and $T'$ are self-adjoint extensions of a symmetric operator $S$, then the {\it maximal common part} of $T$ and $T'$ is the operator $C_{T,T'}$ defined by
\begin{equation}
C_{T,T'}u = Tu,\quad u\in \dom(C_{T,T'})=\{f\in \dom(T)\cap\dom(T')\,|\, Tf=T'f\}.
\end{equation}
Moreover, $T$ and $T'$ are said to be relatively prime with respect to $S$ if $C_{T,T'}=S$.
\end{definition}

\begin{theorem}
Assume Hypothesis \ref{h2.1}.\\[1mm]
$(i)$ If $\alpha, \alpha', \beta, \beta'\in [0,\pi)$ with $\alpha\neq \alpha'$ and $\beta\neq \beta'$, then $T_{\alpha,\beta}$ and $T_{\alpha',\beta'}$ are relatively prime with respect to $T_{min}$.\\[1mm]
$(ii)$  If $\alpha,\beta,\beta'\in [0,\pi)$ with $\beta\neq \beta'$, then the maximal common part of $T_{\alpha,\beta}$ and $T_{\alpha,\beta'}$ is the restriction of $T_{max}$ to the subspace
\begin{equation}
\big\{g\in \dom(T_{max})\,\big|\, \cos(\alpha)\wti g_1(a) + \sin(\alpha)\wti g_2(a)=0,\, \wti g_1(b)=\wti g_2(b)=0\big\}.
\end{equation}
$(iii)$  If $\alpha,\alpha',\beta\in [0,\pi)$ with $\alpha\neq \alpha'$, then the maximal common part of $T_{\alpha,\beta}$ and $T_{\alpha',\beta}$ is the restriction of $T_{max}$ to the subspace
\begin{equation}\lb{2.24y}
\big\{g\in \dom(T_{max})\,\big|\, \wti g_1(a)=\wti g_2(a)=0,\, \cos(\beta)\wti g_1(b) + \sin(\beta)\wti g_2(b)=0\big\}.
\end{equation}
$(iv)$  Let $\alpha,\beta\in [0,\pi)$, $\varphi\in [0,2\pi)$, $R=(R_{j,k})_{j,k=1}^2\in \SL(2,\bbR)$, and define
\begin{equation}
\begin{split}
d(\alpha,\beta,R) &= \cos(\alpha)\cos(\beta)R_{1,2}+\cos(\alpha)\sin(\beta)R_{2,2}\\
&\quad -\sin(\alpha)\cos(\beta)R_{1,1}-\sin(\alpha)\sin(\beta)R_{2,1}.
\end{split}
\end{equation}
If $d(\alpha,\beta,R)\neq 0$, then $T_{\alpha,\beta}$ and $T_{\varphi,R}$ are relatively prime with respect to $T_{min}$.  If $d(\alpha,\beta,R)=0$, then the maximal common part of $T_{\alpha,\beta}$ and $T_{\varphi,R}$ is the restriction of $T_{max}$ to the subspace
\begin{equation}\lb{2.26y}
\big\{g\in \dom(T_{\varphi,R})\,\big|\, \cos(\alpha)\wti g_1(a)+\sin(\alpha)\wti g_2(a)=0\big\}.
\end{equation}
$(v)$ Let $\varphi,\eta\in[0,2\pi)$ and $R,S\in \SL(2,\bbR)$.  If $\det\big(e^{i(\eta-\varphi)}SR^{-1}-I_{\bbC^2}\big)\neq 0$, then $T_{\varphi,R}$ and $T_{\eta,S}$ are relatively prime with respect to $T_{min}$.  If 
$\det\big(e^{i(\eta-\varphi)}SR^{-1}-I_{\bbC^2}\big)=0$, so that $1$ is an eigenvalue of $e^{i(\eta-\varphi)}SR^{-1}$ with corresponding eigenspace $\cV_1\subset\bbC^2$, then the maximal common part of $T_{\varphi,R}$ and $T_{\eta,S}$ is the restriction of $T_{max}$ to the subspace
\begin{equation}\lb{2.27y}
\big\{g\in \dom(T_{\varphi,R})\,\big|\, (\wti g_1(b), \wti g_2(b))^{\top}\in \cV_1\big\}.
\end{equation}
\end{theorem}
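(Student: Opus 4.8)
The plan is to reduce the entire statement to linear algebra on the four generalized boundary values $\wti g_1(a), \wti g_2(a), \wti g_1(b), \wti g_2(b)$. The crucial preliminary observation is that every self-adjoint extension in Theorem \ref{t2.8} is a restriction of $T_{max}$, so on $\dom(T)\cap\dom(T')$ one automatically has $Tf=\tau f=T'f$. Hence the compatibility requirement $Tf=T'f$ in the definition of the maximal common part $C_{T,T'}$ is vacuous, and $C_{T,T'}$ is simply the restriction of $T_{max}$ to $\dom(T)\cap\dom(T')$. Since $T_{min}\subseteq T$ and $T_{min}\subseteq T'$, the inclusion $\dom(T_{min})\subseteq\dom(T)\cap\dom(T')$ always holds; by Remark \ref{r2.9}\,(v) (see \eqref{2.17}), $\dom(T_{min})$ consists of those $g\in\dom(T_{max})$ with $\wti g_1(a)=\wti g_2(a)=\wti g_1(b)=\wti g_2(b)=0$. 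Thus relative primeness is equivalent to the two sets of boundary conditions together forcing all four boundary values to vanish, while in the non-prime cases $C_{T,T'}$ is obtained by intersecting the two sets of conditions.

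For items $(i)$--$(iii)$ (separated versus separated) the conditions at $a$ and at $b$ decouple. At $a$ the pair $(\wti g_1(a),\wti g_2(a))$ must satisfy both rows of $\left(\begin{smallmatrix}\cos\al & \sin\al\\ \cos\al' & \sin\al'\end{smallmatrix}\right)$, whose determinant equals $\sin(\al'-\al)$; for $\al,\al'\in[0,\pi)$ this vanishes exactly when $\al=\al'$. Hence $\al\neq\al'$ forces $\wti g_1(a)=\wti g_2(a)=0$, while $\al=\al'$ leaves the single condition $\cos(\al)\wti g_1(a)+\sin(\al)\wti g_2(a)=0$, and likewise at $b$. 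Assembling the two endpoints yields $(i)$ (both pairs vanish, hence relative primeness), the subspace in $(ii)$ (vanishing at $b$, single condition at $a$), and \eqref{2.24y} in $(iii)$ (vanishing at $a$, single condition at $b$).

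For item $(iv)$ (separated versus coupled) I would set $B_a=(\wti g_1(a),\wti g_2(a))^{\top}$ and use the coupling $B_b=e^{i\vp}RB_a$. The separated condition at $a$ confines $B_a$ to the complex line spanned by $(-\sin\al,\cos\al)^{\top}$, so $B_a=c\,(-\sin\al,\cos\al)^{\top}$ for some $c\in\bbC$; substituting into the separated condition $(\cos\be,\sin\be)B_b=0$ at $b$ gives $c\,e^{i\vp}(\cos\be,\sin\be)R(-\sin\al,\cos\al)^{\top}=0$. A direct expansion identifies the scalar $(\cos\be,\sin\be)R(-\sin\al,\cos\al)^{\top}$ with $d(\al,\be,R)$. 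Therefore $d(\al,\be,R)\neq 0$ forces $c=0$, whence $B_a=B_b=0$ and $C_{T_{\al,\be},T_{\vp,R}}=T_{min}$; if $d(\al,\be,R)=0$ the condition at $b$ is implied by the coupling together with the condition at $a$, leaving exactly the subspace \eqref{2.26y}.

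For item $(v)$ (coupled versus coupled), the two couplings $B_b=e^{i\vp}RB_a$ and $B_b=e^{i\eta}SB_a$ combine: eliminating $B_a=e^{-i\vp}R^{-1}B_b$ (legitimate since $R\in\SL(2,\bbR)$ is invertible) produces $\big(e^{i(\eta-\vp)}SR^{-1}-I_{\bbC^2}\big)B_b=0$. If this matrix is nonsingular then $B_b=0$ and hence $B_a=0$, giving relative primeness; otherwise $B_b$ ranges over the eigenspace $\cV_1$ of $e^{i(\eta-\vp)}SR^{-1}$ for the eigenvalue $1$, which together with the $(\vp,R)$-coupling yields \eqref{2.27y}. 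All computations are elementary; the step requiring the most care is the determinant bookkeeping in $(iv)$, namely verifying that the scalar extracted from pushing the separated datum at $a$ through $R$ reproduces precisely the expression $d(\al,\be,R)$, and in $(v)$ keeping track of the order of the factors so that the relevant matrix is $e^{i(\eta-\vp)}SR^{-1}$ rather than its inverse.
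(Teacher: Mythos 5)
Your argument is correct and follows essentially the same route as the paper's proof: reduce everything to linear algebra on the four generalized boundary values via the characterization \eqref{2.17} of $\dom(T_{min})$ (the compatibility $Tf=T'f$ being automatic since both operators restrict $T_{max}$), with the determinants $\sin(\alpha-\alpha')$, $\sin(\beta-\beta')$, $d(\alpha,\beta,R)$, and $\det\big(e^{i(\eta-\varphi)}SR^{-1}-I_{\bbC^2}\big)$ deciding whether the combined conditions force all boundary data to vanish. The only (harmless) deviation is in item $(iv)$, where your parametrization $B_a=c\,(-\sin(\alpha),\cos(\alpha))^{\top}$ of the kernel of the condition at $a$ handles the containment of the set \eqref{2.26y} in $\dom(T_{\alpha,\beta})\cap\dom(T_{\varphi,R})$ uniformly, whereas the paper's proof distinguishes the cases $\alpha\neq 0$ and $\alpha=0$.
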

\begin{proof}
To prove $(i)$, it suffices to show that $f\in \dom(T_{\alpha,\beta})\cap\dom(T_{\alpha',\beta'})$ implies $f\in \dom(T_{min})$.  If $f\in \dom(T_{\alpha,\beta})\cap\dom(T_{\alpha',\beta'})$, then
\begin{align}
\begin{pmatrix}
\cos(\alpha) & \sin(\alpha)\\
\cos(\alpha') & \sin(\alpha')
\end{pmatrix}
\begin{pmatrix}
\wti f_1(a)\\[1mm]
\wti f_2(a)
\end{pmatrix}&=
\begin{pmatrix}
0\\
0
\end{pmatrix},\lb{2.23yy}\\
\begin{pmatrix}
\cos(\beta) & \sin(\beta)\\
\cos(\beta') & \sin(\beta')
\end{pmatrix}
\begin{pmatrix}
\wti f_1(b)\\[1mm]
\wti f_2(b)
\end{pmatrix}&=
\begin{pmatrix}
0\\
0
\end{pmatrix}.\lb{2.23y}
\end{align}
The determinants of the $2\times 2$ coefficient matrices in \eqref{2.23yy} and \eqref{2.23y} are $\sin(\alpha-\alpha')$ and $\sin(\beta-\beta')$, respectively.  Since the assumptions on $\alpha$, $\alpha'$, $\beta$, and $\beta'$ imply $\alpha-\alpha',\beta-\beta'\in (-\pi,\pi)\backslash\{0\}$, it follows that the coefficient matrices in \eqref{2.23yy} and \eqref{2.23y} are invertible.  Hence, $\wti f_1(a)=\wti f_2(a)=\wti f_1(b)=\wti f_2(b)=0$, and the characterization of $\dom(T_{min})$ in \eqref{2.17} implies $f\in \dom(T_{min})$.

The proofs of $(ii)$ and $(iii)$ are similar, so we only provide the proof of $(ii)$ here.  Let $D$ denote the set in \eqref{2.24y}.  To prove $(ii)$, it suffices to show $\dom(T_{\alpha,\beta})\cap\dom(T_{\alpha,\beta'})=D$.  If $f\in \dom(T_{\alpha,\beta})\cap\dom(T_{\alpha,\beta'})$, then $\cos(\alpha)\wti f_1(a)+\sin(\alpha)\wti f_2(a)=0$ and \eqref{2.23y} holds.  As in the proof of $(i)$, the determinant of the $2\times 2$ coefficient matrix in \eqref{2.23y} is nonzero.  Therefore, $\wti f_1(b)=\wti f_2(b)=0$, and it follows that $f\in D$.  Conversely, if $f\in D$, then it is clear that $f$ simultaneously belongs to $\dom(T_{\alpha,\beta})$ and $\dom(T_{\alpha,\beta'})$.

The proof of $(iv)$ begins with a general observation about functions in the intersection $\dom(T_{\alpha,\beta})\cap\dom(T_{\varphi,R})$.  If $f\in\dom(T_{\alpha,\beta})\cap\dom(T_{\varphi,R})$, then
\begin{equation}\lb{2.29y}
\begin{split}
\cos(\alpha)\wti f_1(a) + \sin(\alpha)\wti f_2(a)&=0,\\
\cos(\beta)\wti f_1(b) + \sin(\beta)\wti f_2(b)&=0,
\end{split}
\end{equation}
and
\begin{equation}\lb{2.30y}
\begin{split}
\wti f_1(b)&=e^{i\varphi}R_{1,1}\wti f_1(a) + e^{i\varphi}R_{1,2}\wti f_2(a),\\
\wti f_2(b)&= e^{i\varphi}R_{2,1}\wti f_1(a) + e^{i\varphi}R_{2,2}\wti f_2(a).
\end{split}
\end{equation}
Applying \eqref{2.30y} in \eqref{2.29y} yields a set of boundary conditions that may be recast in matrix form as
\begin{equation}\lb{2.31y}
\begin{pmatrix}
\cos(\alpha) & \sin(\alpha)\\
\cos(\beta)R_{1,1}+\sin(\beta)R_{2,1} & \cos(\beta)R_{1,2}+\sin(\beta)R_{2,2}
\end{pmatrix}
\begin{pmatrix}
\wti f_1(a)\\[1mm]
\wti f_2(a)
\end{pmatrix}
=
\begin{pmatrix}
0\\
0
\end{pmatrix}.
\end{equation}
The determinant of the $2\times 2$ coefficient matrix in \eqref{2.31y} is $d(\alpha,\beta,R)$.

If $d(\alpha,\beta,R)\neq 0$, then \eqref{2.31y} implies $\wti f_1(a)=\wti f_2(a)=0$.  In turn, \eqref{2.30y} implies $\wti f_1(b)=\wti f_2(b)=0$.  Hence, $f\in \dom(T_{min})$, and it follows that $T_{\alpha,\beta}$ and $T_{\varphi,R}$ are relatively prime with respect to $T_{min}$.

To complete the proof of $(iv)$, it remains to show that the set in \eqref{2.26y}, call it $D$, coincides with $\dom(T_{\alpha,\beta})\cap\dom(T_{\varphi,R})$ when $d(\alpha,\beta,R)=0$.  The containment $\dom(T_{\alpha,\beta})\cap\dom(T_{\varphi,R})\subset D$ follows immediately from the definitions of $T_{\alpha,\beta}$, $T_{\varphi,R}$, and $D$.  To prove the reverse containment, let $f\in D$, so that $f\in \dom(H_{\varphi,R})$ and $f$ satisfies the boundary condition at $a$ in \eqref{2.29y}.  The proof is then reduced to showing $f$ satisfies the boundary condition at $b$ in \eqref{2.29y}.  In order to do this, one distinguishes the cases $\alpha\neq 0$ and $\alpha=0$.  If $\alpha\neq 0$, one uses $d(\alpha,\beta,R)=0$, the conditions in \eqref{2.30y}, and $\sin(\alpha)\wti f_2(a)=-\cos(\alpha)\wti f_1(a)$ to compute
\begin{align}
&e^{-i\varphi}\sin(\alpha) \big[\cos(\beta)\wti f_1(b) + \sin(\beta)\wti f_2(b)\big]\lb{2.32y}\\
&\quad = [\cos(\beta)R_{1,1}+\sin(\beta)R_{2,1}]\sin(\alpha)\wti f_1(a)\no\\
&\qquad -[\cos(\beta)R_{1,2}+\sin(\beta)R_{2,2}]\cos(\alpha)\wti f_1(a)\no\\
&\quad = -d(\alpha,\beta,R)\wti f_1(a)\no\\
&\quad= 0.\no
\end{align}
Since $e^{-i\varphi}\sin(\alpha)\neq 0$ when $\alpha\neq 0$, \eqref{2.32y} implies $f$ satisfies the boundary condition at $b$ in \eqref{2.29y}.  If $\alpha=0$, then $\wti f_1(a)=0$, and \eqref{2.30y} simplifies.  One then computes
\begin{align}
\cos(\beta) \wti f_1(b) + \sin(\beta)\wti f_2(b) &= e^{i\varphi}[\cos(\beta)R_{1,2}+\sin(\beta)R_{2,2}]\wti f_2(a)\\
&=e^{i\varphi}d(0,\beta,R)\wti f_2(a)\no\\
&=0,\no
\end{align}
so $f$ satisfies the boundary condition at $b$ in \eqref{2.29y}.

To prove $(v)$, let $f\in \dom(T_{\varphi,R})\cap\dom(T_{\eta,S})$, so that
\begin{equation}\lb{2.35y}
\begin{pmatrix}
\wti f_1(b)\\[1mm]
\wti f_2(b)
\end{pmatrix}=
e^{i\eta}S
\begin{pmatrix}
\wti f_1(a)\\[1mm]
\wti f_2(a)
\end{pmatrix}\quad \text{and}\quad
\begin{pmatrix}
\wti f_1(b)\\[1mm]
\wti f_2(b)
\end{pmatrix}=
e^{i\varphi}R
\begin{pmatrix}
\wti f_1(a)\\[1mm]
\wti f_2(a)
\end{pmatrix}.
\end{equation}
Using the invertibility of $e^{i\varphi}R$ to solve the second equation in \eqref{2.35y} for the vector 
$\big(\wti f_1(a), \wti f_2(a)\big)^{\top}$ and substituting into the first equation in \eqref{2.35y} yields
\begin{equation}\lb{2.36y}
\big[e^{i(\eta-\varphi)}SR^{-1}-I_{\bbC^2}\big]
\begin{pmatrix}
\wti f_1(b)\\[1mm]
\wti f_2(b)
\end{pmatrix}=
\begin{pmatrix}
0\\
0
\end{pmatrix}.
\end{equation}
If $\det\big(e^{i(\eta-\varphi)}SR^{-1}-I_{\bbC^2}\big)\neq 0$, then \eqref{2.36y} implies $\wti f_1(b)=\wti f_2(b)=0$.  In turn, the invertibility of $e^{i\varphi}R$ and the second equation in \eqref{2.35y} yields $\wti f_1(a)=\wti f_2(a)=0$.  Hence, $f\in \dom(T_{min})$, and it follows that $T_{\varphi,R}$ and $T_{\eta,S}$ are relatively prime with respect to $T_{min}$.

Now, suppose that $\det\big(e^{i(\eta-\varphi)}SR^{-1}-I_{\bbC^2}\big)=0$, so that $1$ is an eigenvalue of $e^{i(\eta-\varphi)}SR^{-1}$ with corresponding eigenspace $\cV_1$.  Let $D$ denote the subspace in \eqref{2.27y}.  To complete the proof of $(v)$, it suffices to show the subspace $D$ coincides with $\dom(T_{\varphi,R})\cap\dom(T_{\eta,S})$.  To this end, let $f\in \dom(T_{\varphi,R})\cap\dom(T_{\eta,S})$, so that both equalities in \eqref{2.35y} hold.  In particular, \eqref{2.36y} holds due to the invertibility of $e^{i\varphi}R$, and one concludes that 
$\big(\wti f_1(b), \wti f_2(b)\big)^{\top}\in \cV_1$.  Therefore, $f\in D$.  Conversely, if $f\in D$, then $f\in \dom(T_{\varphi,R})$, and one only needs to show $f\in \dom(T_{\eta,S})$ to complete the proof.  Using the boundary conditions implied by the inclusion $f\in \dom(T_{\varphi,R})$ (i.e., the second equality in \eqref{2.35y}), one computes
\begin{equation}\lb{2.37y}
e^{i\eta}S
\begin{pmatrix}
\wti f_1(a)\\[1mm]
\wti f_2(a)
\end{pmatrix}
=
e^{i(\eta-\varphi)}SR^{-1}
\begin{pmatrix}
\wti f_1(b)\\[1mm]
\wti f_2(b)
\end{pmatrix}=
\begin{pmatrix}
\wti f_1(b)\\[1mm]
\wti f_2(b)
\end{pmatrix},
\end{equation}
where the last equality in \eqref{2.37y} follows from the fact that $\big(\wti f_1(b),\wti f_2(b)\big)^{\top}\in \cV_1$ by the assumption $f\in D$.  The equality in \eqref{2.37y} implies $f\in \dom(T_{\eta,S})$.
\end{proof}

Finally, we turn to the characterization of generalized boundary values in the case where $T_{min}$ is bounded from below following \cite{GLN20} and \cite{NZ92}.

We recall the basics of oscillation theory with particular emphasis on principal and nonprincipal solutions, a notion originally due to Leighton and Morse \cite{LM36} (see also Rellich \cite{Re43}, \cite{Re51} and Hartman and Wintner \cite[Appendix]{HW55}). Our outline below follows \cite{CGN16}, 
\cite[Sects.~13.6, 13.9, 13.10]{DS88}, \cite[Ch.~7]{GZ21}, \cite[Ch.~XI]{Ha02}, \cite{NZ92}, \cite[Chs.~4, 6--8]{Ze05}. 

\begin{definition} \lb{d2.10}
Assume Hypothesis \ref{h2.1}. \\[1mm] 
$(i)$ Fix $c\in (a,b)$ and $\lambda\in\bbR$. Then $\tau - \lam$ is
called {\it nonoscillatory} at $a$ $($resp., $b$$)$, 
if every real-valued solution $u(\lambda,\dott)$ of 
$\tau u = \lambda u$ has finitely many
zeros in $(a,c)$ $($resp., $(c,b)$$)$. Otherwise, $\tau - \lam$ is called {\it oscillatory}
at $a$ $($resp., $b$$)$. \\[1mm] 
$(ii)$ Let $\lambda_0 \in \bbR$. Then $T_{min}$ is called bounded from below by $\lambda_0$, 
and one writes $T_{min} \geq \lambda_0 I_{L_r^2((a,b))}$, if 
\begin{equation} 
\big(u, [T_{min} - \lambda_0 I_{L_r^2((a,b))}]u\big)_{L^2((a,b);rdx)}\geq 0, \quad u \in \dom(T_{min}).
\end{equation}
\end{definition}

The following is a key result. 

\begin{theorem} \lb{t2.11} 
Assume Hypothesis \ref{h2.1}. Then the following items $(i)$--$(iii)$ are
equivalent: \\[1mm] 
$(i)$ $T_{min}$ $($and hence any symmetric extension of $T_{min})$
is bounded from below. \\[1mm] 
$(ii)$ There exists a $\nu_0\in\bbR$ such that for all $\lambda < \nu_0$, $\tau - \lam$ is
nonoscillatory at $a$ and $b$. \\[1mm]
$(iii)$ For fixed $c, d \in (a,b)$, $c \leq d$, there exists a $\nu_0\in\bbR$ such that for all
$\lambda<\nu_0$, $\tau u = \lambda u$ has $($real-valued\,$)$ nonvanishing solutions
$u_a(\lambda,\dott) \neq 0$,
$\hatt u_a(\lambda,\dott) \neq 0$ in the neighborhood $(a,c]$ of $a$, and $($real-valued\,$)$ nonvanishing solutions
$u_b(\lambda,\dott) \neq 0$, $\hatt u_b(\lambda,\dott) \neq 0$ in the neighborhood $[d,b)$ of
$b$, such that 
\begin{align}
&W(\hatt u_a (\lambda,\dott),u_a (\lambda,\dott)) = 1,
\quad u_a (\lambda,x)=\oh(\hatt u_a (\lambda,x))
\text{ as $x\downarrow a$,} \lb{2.18} \\
&W(\hatt u_b (\lambda,\dott),u_b (\lambda,\dott))\, = 1,
\quad u_b (\lambda,x)\,=\oh(\hatt u_b (\lambda,x))
\text{ as $x\uparrow b$,} \lb{2.19} \\
&\int_a^c dx \, p(x)^{-1}u_a(\lambda,x)^{-2}=\int_d^b dx \, 
p(x)^{-1}u_b(\lambda,x)^{-2}=\infty,  \lb{2.20} \\
&\int_a^c dx \, p(x)^{-1}{\hatt u_a(\lambda,x)}^{-2}<\infty, \quad 
\int_d^b dx \, p(x)^{-1}{\hatt u_b(\lambda,x)}^{-2}<\infty. \lb{2.21}
\end{align}
\end{theorem}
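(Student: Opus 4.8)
The plan is to establish $(i)\Leftrightarrow(ii)$ and $(ii)\Leftrightarrow(iii)$, organizing everything around the equivalence between semiboundedness of $T_{min}$ and disconjugacy (hence nonoscillation) of $\tau-\lam$ near the endpoints. The basic link is that, since $\dom\big(\dot T_{min}\big)$ consists of compactly supported functions and $T_{min}=\ol{\dot T_{min}}$, for such (real-valued) $u$ an integration by parts gives $\big(u,(T_{min}-\lam)u\big)_{L^2_r((a,b))}=\int_a^b dx\,\big[p|u'|^2+(q-\lam r)|u|^2\big]$, with no boundary contributions. Hence $T_{min}\ge\lam_0 I_{L^2_r((a,b))}$ is equivalent to the positivity of this quadratic form, for every $\lam\le\lam_0$, on compactly supported $u$.

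For $(i)\Leftrightarrow(ii)$ I would invoke the classical equivalence, for fixed real $\lam$, between (a) positivity of the above form on compactly supported functions, (b) disconjugacy of $\tau-\lam$ on $(a,b)$, and (c) the existence of a solution of $\tau u=\lam u$ that is positive on $(a,b)$; the passage (b)$\Leftrightarrow$(c) is the Riccati substitution $\phi=pu'/u$, which turns the completed square $\int_a^b dx\,p\,|v'-(\phi/p)v|^2\ge0$ into form positivity. Disconjugacy on $(a,b)$ trivially forces nonoscillation at $a$ and $b$, giving $(i)\Rightarrow(ii)$. For the converse, nonoscillation at both endpoints for all $\lam<\nu_0$ yields a solution nonvanishing on some $(a,c']$ and one nonvanishing on some $[d',b)$; on the compact middle interval $[c',d']$ the regular equation $\tau-\lam$ is disconjugate once $\lam$ is taken sufficiently negative (Sturm comparison), and splicing the resulting nowhere-vanishing solutions produces a positive solution on all of $(a,b)$ for such $\lam$, hence form positivity and $(ii)\Rightarrow(i)$.

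For $(ii)\Rightarrow(iii)$, fix $\lam<\nu_0$ and work near $a$ (the endpoint $b$ is symmetric). Nonoscillation supplies a real solution $w$ of $\tau u=\lam u$ with no zeros on some $(a,c]$, and a second, linearly independent solution is produced by reduction of order as a multiple of $w(x)\int p(t)^{-1}w(t)^{-2}\,dt$. The dichotomy $\int_a^c p^{-1}w^{-2}\,dx=\infty$ versus $<\infty$ then singles out which solution is principal: the principal solution $u_a$ is the recessive one satisfying \eqref{2.20}, while the nonprincipal solution $\hatt u_a$ satisfies \eqref{2.21}, the two being related by $\hatt u_a=u_a\int p^{-1}u_a^{-2}$ with the integral taken away from $a$. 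The asymptotic dominance $u_a=\oh(\hatt u_a)$ as $x\downarrow a$ in \eqref{2.18} follows because $u_a/\hatt u_a$ equals the reciprocal of a tail integral that diverges as $x\downarrow a$, and a direct Wronskian computation shows $W(\hatt u_a,u_a)=1$ after the customary normalization. Finally, $(iii)\Rightarrow(ii)$ is immediate: a solution nonvanishing near an endpoint is, by Sturm separation, incompatible with oscillation there, so $\tau-\lam$ is nonoscillatory at $a$ and $b$ for all $\lam<\nu_0$, closing the cycle.

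The main obstacle is the $(ii)\Rightarrow(iii)$ step, specifically the correct identification of the principal versus nonprincipal solution and the verification of the integral conditions \eqref{2.20}--\eqref{2.21} together with the asymptotic relation $u_a=\oh(\hatt u_a)$: these hinge on analyzing the reduction-of-order integral $\int p^{-1}w^{-2}$ in each branch of the dichotomy and controlling its tail behavior as $x\downarrow a$. A secondary point requiring care is the splicing argument in $(ii)\Rightarrow(i)$, where $\lam$ must be chosen negative enough to guarantee disconjugacy on the compact middle interval so that the endpoint solutions extend to a globally positive solution.
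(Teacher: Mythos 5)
The paper itself offers no proof of Theorem \ref{t2.11}: it is recalled as a standard result of oscillation theory with pointers to \cite{CGN16}, \cite[Sects.~13.6, 13.9, 13.10]{DS88}, \cite[Ch.~XI]{Ha02}, \cite{NZ92}, \cite[Chs.~4, 6--8]{Ze05}, so your proposal can only be measured against the classical arguments in those sources. Your overall architecture is the right one --- the Jacobi/Riccati equivalence of form positivity on compactly supported functions with disconjugacy and with the existence of a positive solution, and the reduction-of-order construction $\hatt u_a = u_a \int p^{-1} u_a^{-2}$ (integrated away from $a$) that produces the principal/nonprincipal dichotomy \eqref{2.18}--\eqref{2.21}. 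The $(ii)\Leftrightarrow(iii)$ discussion and the identification of the principal solution as the one with divergent tail integral are essentially correct.

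The genuine gap is the splicing step in $(ii)\Rightarrow(i)$. Having a solution positive on $(a,c']$, disconjugacy of $\tau-\lam$ on a compact middle interval, and a solution positive on $[d',b)$ does \emph{not} produce a single solution positive on all of $(a,b)$: disconjugacy does not patch across overlapping subintervals. For $-u''=u$ on $(0,3\pi/2)$, the equation is disconjugate on $(0,\pi)$ and on $(\pi/2,3\pi/2)$ (each of length $\pi$, so no solution $\sin(x-\phi)$ has two zeros in either), yet every solution has a zero in $(0,3\pi/2)$, so there is no globally positive solution. Two different solutions, each positive on its own subinterval, cannot be ``spliced'' into one solution, and the one positive near $a$ may well vanish when continued toward $b$ even when $\lam$ is very negative (its quasi-derivative at $c'$ is not under your control). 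The same difficulty recurs, unacknowledged, in $(ii)\Rightarrow(iii)$: the statement fixes $c$ and $d$ \emph{in advance}, so you must produce solutions nonvanishing on all of $(a,c]$ and $[d,b)$, not merely on some smaller neighborhoods supplied by nonoscillation. The standard repair is Glazman's decomposition method: nonoscillation of $\tau - \lam$ at $a$ (resp.\ $b$) yields, via Sturm separation, disconjugacy on some $(a,c_0)$ (resp.\ $(d_0,b)$), hence semiboundedness of the minimal operators on the two outer subintervals; the middle piece is regular, hence semibounded; and the minimal operator on $(a,b)$ and the direct sum of the three pieces differ by a finite-rank resolvent perturbation (equivalently, share a common symmetric restriction of finite codimension), so semiboundedness transfers. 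Once $(i)$ is in hand, $(i)\Rightarrow(iii)$ for the prescribed $c,d$ follows from disconjugacy on all of $(a,b)$ for $\lam$ below the lower bound, and your reduction-of-order analysis then applies verbatim; the natural cycle is therefore $(i)\Rightarrow(iii)\Rightarrow(ii)\Rightarrow(i)$ rather than the two separate equivalences you propose.
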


\begin{definition} \lb{d2.12}
Assume Hypothesis \ref{h2.1}, suppose that $T_{min}$ is bounded from below, and let 
$\lambda\in\bbR$. Then $u_a(\lambda,\dott)$ $($resp., $u_b(\lambda,\dott)$$)$ in Theorem
\ref{t2.11}\,$(iii)$ is called a {\it principal} $($or {\it minimal}\,$)$
solution of $\tau u=\lambda u$ at $a$ $($resp., $b$$)$. A real-valued solution 
$\wti{\wti u}_a(\lambda,\dott)$ $($resp., $\wti{\wti u}_b(\lambda,\dott)$$)$ of $\tau
u=\lambda u$ linearly independent of $u_a(\lambda,\dott)$ $($resp.,
$u_b(\lambda,\dott)$$)$ is called {\it nonprincipal} at $a$ $($resp., $b$$)$. In particular, $\hatt u_a (\lambda,\dott)$ 
$($resp., $\hatt u_b(\lambda,\dott)$$)$ in \eqref{2.18}--\eqref{2.21} are nonprincipal solutions at $a$ $($resp., $b$$)$.  
\end{definition}

Next, we revisit  in Theorem \ref{t2.8} how the generalized boundary values are utilized in the description of all self-adjoint extensions of $T_{min}$ in the case where $T_{min}$ is bounded from below. 

\begin{theorem} [{\cite[Theorem~4.5]{GLN20}}]\lb{t2.13}
Assume Hypothesis \ref{h2.1} and that $\tau$ is in the limit circle case at $a$ and $b$ $($i.e., $\tau$ is quasi-regular 
on $(a,b)$$)$. In addition, assume that $T_{min} \geq \lambda_0 I$ for some $\lambda_0 \in \bbR$, and denote by 
$u_a(\lambda_0, \dott)$ and $\hatt u_a(\lambda_0, \dott)$ $($resp., $u_b(\lambda_0, \dott)$ and 
$\hatt u_b(\lambda_0, \dott)$$)$ principal and nonprincipal solutions of $\tau u = \lambda_0 u$ at $a$ 
$($resp., $b$$)$, satisfying
\begin{equation}
W(\hatt u_a(\lambda_0,\dott), u_a(\lambda_0,\dott)) = W(\hatt u_b(\lambda_0,\dott), u_b(\lambda_0,\dott)) = 1.  
\lb{2.22} 
\end{equation}
Introducing $v_j \in \dom(T_{max})$, $j=1,2$, via 
\begin{align}
v_1(x) = \begin{cases} \hatt u_a(\lambda_0,x), & \text{for $x$ near a}, \\
\hatt u_b(\lambda_0,x), & \text{for $x$ near b},  \end{cases}   \quad 
v_2(x) = \begin{cases} u_a(\lambda_0,x), & \text{for $x$ near a}, \\
u_b(\lambda_0,x), & \text{for $x$ near b},  \end{cases}   \lb{2.23}
\end{align} 
one obtains for all $g \in \dom(T_{max})$, 
\begin{align}
\begin{split} 
\wti g(a) &= - W(v_2, g)(a) = \wti g_1(a) =  - W(u_a(\lambda_0,\dott), g)(a) = \lim_{x \downarrow a} \f{g(x)}{\hatt u_a(\lambda_0,x)},    \\
\wti g(b) &= - W(v_2, g)(b) = \wti g_1(b) =  - W(u_b(\lambda_0,\dott), g)(b) 
= \lim_{x \uparrow b} \f{g(x)}{\hatt u_b(\lambda_0,x)},    \lb{2.24} 
\end{split} \\
\begin{split} 
{\wti g}^{\, \prime}(a) &= W(v_1, g)(a) = \wti g_2(a) = W(\hatt u_a(\lambda_0,\dott), g)(a) 
= \lim_{x \downarrow a} \f{g(x) - \wti g(a) \hatt u_a(\lambda_0,x)}{u_a(\lambda_0,x)},    \\ 
{\wti g}^{\, \prime}(b) &= W(v_1, g)(b) = \wti g_2(b) = W(\hatt u_b(\lambda_0,\dott), g)(b)  
= \lim_{x \uparrow b} \f{g(x) - \wti g(b) \hatt u_b(\lambda_0,x)}{u_b(\lambda_0,x)}.    \lb{2.25}
\end{split} 
\end{align}
In particular, the limits on the right-hand sides in \eqref{2.24}, \eqref{2.25} exist. 
\end{theorem}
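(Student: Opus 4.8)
The plan is to reduce the statement to the general boundary-value calculus of Theorem~\ref{t2.8} and then to extract the two pointwise limit formulas from an exact Wronskian inversion identity, the principal/nonprincipal asymptotics, and a single delicate $\infty\cdot0$ estimate at the endpoint. First I would check that the functions $v_1,v_2$ introduced in \eqref{2.23} are admissible in Theorem~\ref{t2.8}: near $a$ (and symmetrically near $b$) they are real-valued solutions of $(\tau-\lambda_0)u=0$, hence lie in $\Lr$ near the endpoints by the limit circle hypothesis, so they define elements of $\dom(T_{max})$; the normalization \eqref{2.22} gives $W(\hatt u_a(\lambda_0,\dott),u_a(\lambda_0,\dott))(a)=1$ (and likewise at $b$), while $W(v_j,v_j)\equiv0$ since the $v_j$ are real. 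Thus \eqref{2.10} applies verbatim and yields, by inspection, the first equalities in \eqref{2.24} and \eqref{2.25}, namely $\wti g_1(a)=-W(v_2,g)(a)=-W(u_a(\lambda_0,\dott),g)(a)$ and $\wti g_2(a)=W(v_1,g)(a)=W(\hatt u_a(\lambda_0,\dott),g)(a)$. What remains, and what constitutes the real content, are the limit representations together with the existence of those limits.

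Next I would assemble the two analytic ingredients. The Lagrange identity in differential form,
\[
\frac{d}{dx}W(u_a(\lambda_0,\dott),g)(x)=-r(x)\,u_a(\lambda_0,x)\,[(\tau-\lambda_0)g](x),
\]
together with $u_a(\lambda_0,\dott),\hatt u_a(\lambda_0,\dott)\in\Lr$ near $a$ and $(\tau-\lambda_0)g\in\Lr$ (as $g\in\dom(T_{max})$), shows via Cauchy--Schwarz that the relevant integrals converge, so $W(u_a,g)(a)$ and $W(\hatt u_a,g)(a)$ exist. The second ingredient is the purely algebraic identity, valid pointwise for every $g$ because $W(\hatt u_a,u_a)=1$,
\[
g(x)=u_a(\lambda_0,x)\,W(\hatt u_a(\lambda_0,\dott),g)(x)-\hatt u_a(\lambda_0,x)\,W(u_a(\lambda_0,\dott),g)(x).
\]
Dividing by $\hatt u_a(\lambda_0,x)$ and letting $x\downarrow a$, the first term vanishes since $u_a=\oh(\hatt u_a)$ by \eqref{2.18} while $W(\hatt u_a,g)$ stays bounded; this proves $\lim_{x\downarrow a}g(x)/\hatt u_a(\lambda_0,x)=-W(u_a,g)(a)=\wti g_1(a)$, i.e.\ \eqref{2.24}.

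For \eqref{2.25} I would set $w=g-\wti g(a)\hatt u_a(\lambda_0,\dott)$, so that $(\tau-\lambda_0)w=(\tau-\lambda_0)g$, $W(\hatt u_a,w)=W(\hatt u_a,g)$, and $W(u_a,w)(x)=W(u_a,g)(x)-W(u_a,g)(a)\to0$. Feeding $w$ into the inversion identity and dividing by $u_a$ gives
\[
\frac{w(x)}{u_a(\lambda_0,x)}=W(\hatt u_a(\lambda_0,\dott),g)(x)-\frac{\hatt u_a(\lambda_0,x)}{u_a(\lambda_0,x)}\,W(u_a(\lambda_0,\dott),w)(x),
\]
whose first term tends to $\wti g_2(a)$, so the theorem reduces to showing that the second term tends to $0$. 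This is the hard part: it is an $\infty\cdot0$ indeterminacy, because, writing $I(x)=\int_x^{c}(p\,u_a(\lambda_0,\dott)^2)^{-1}\,dt$ for a fixed $c\in(a,b)$ near $a$, the normalization forces $\hatt u_a(\lambda_0,x)=u_a(\lambda_0,x)\,[\gamma+I(x)]$ for some constant $\gamma$, so $\hatt u_a/u_a\to+\infty$ by \eqref{2.20} while $W(u_a,w)(x)\to0$. Since $\gamma\,W(u_a,w)(x)\to0$ trivially, it suffices to control $I(x)\,W(u_a,w)(x)$.

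The resolution, which I expect to be the crux, trades the blow-up of $I$ for the $\Lr$-integrability of $\hatt u_a$ guaranteed by the limit circle hypothesis. Because $I$ is decreasing one has $I(x)\le I(t)$ for $a<t<x$, and $I(t)\,u_a(\lambda_0,t)=\hatt u_a(\lambda_0,t)-\gamma\,u_a(\lambda_0,t)\in\Lr$ near $a$; using $W(u_a,w)(x)=-\int_a^x r\,u_a\,(\tau-\lambda_0)g\,dt$ and pulling $I(x)$ inside the integral,
\[
\begin{aligned}
I(x)\,\big|W(u_a,w)(x)\big|
&\le \int_a^x I(t)\,r(t)\,|u_a(\lambda_0,t)|\,\big|[(\tau-\lambda_0)g](t)\big|\,dt\\
&\le \Big(\int_a^x r\,\big|\hatt u_a(\lambda_0,\dott)-\gamma\,u_a(\lambda_0,\dott)\big|^2\,dt\Big)^{1/2}\Big(\int_a^x r\,\big|(\tau-\lambda_0)g\big|^2\,dt\Big)^{1/2},
\end{aligned}
\]
and both factors tend to $0$ as $x\downarrow a$ by $\Lr$-integrability near $a$. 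This yields $\lim_{x\downarrow a}w(x)/u_a(\lambda_0,x)=\wti g_2(a)$, proving \eqref{2.25}; the endpoint $b$ is handled identically with $x\uparrow b$ and the solutions at $b$. It is exactly in this final estimate that the limit circle assumption enters decisively, through $\hatt u_a(\lambda_0,\dott)\in\Lr$ near the endpoint.
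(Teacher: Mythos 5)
Your proof is correct. The paper itself gives no proof of Theorem \ref{t2.13} (it is quoted from \cite[Theorem~4.5]{GLN20}), and your argument supplies exactly the standard one used there: the Wronskian inversion identity $g=u_a\,W(\hatt u_a,g)-\hatt u_a\,W(u_a,g)$, the existence of the endpoint Wronskians via the Lagrange identity and Cauchy--Schwarz, the reduction-of-order representation $\hatt u_a=u_a[\gamma+I]$, and the key estimate bounding $I(x)W(u_a,w)(x)$ by $L^2_r$-tails of $\hatt u_a-\gamma u_a$ and $(\tau-\lambda_0)g$, which is precisely where the limit circle hypothesis is used.
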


\begin{remark} \lb{r2.14}
The notion of ``generalized boundary values'' in \eqref{2.10} and \eqref{2.24}, \eqref{2.25} corresponds to ``boundary values for $\tau$'' in the sense of \cite[p.~1297, 1304--1307]{DS88}, see also 
\cite[Sect.~3]{Fu73}, \cite[p.~57]{Fu77}. \hfill $\diamond$
\end{remark}

The Friedrichs extension $T_F$ of $T_{min}$ now permits a particularly simple characterization in terms of the generalized boundary values $\wti g(a), \wti g(b)$ as derived by Niessen and Zettl \cite{NZ92}(see also \cite{GP79}, \cite{Ka72}, \cite{Ka78}, \cite{KKZ86}, \cite{MZ00}, \cite{Re51}, \cite{Ro85}, \cite{YSZ15}):

\begin{theorem} \lb{t2.15}
Assume Hypothesis \ref{h2.1} and that $\tau$ is in the limit circle case at $a$ and $b$ $($i.e., $\tau$ is quasi-regular 
on $(a,b)$$)$. In addition, assume that $T_{min} \geq \lambda_0 I$ for some $\lambda_0 \in \bbR$. Then the Friedrichs extension $T_F=T_{0,0}$ of $T_{min}$ is characterized by
\begin{align}
T_F f = \tau f, \quad f \in \dom(T_F)= \big\{g\in\dom(T_{max})  \, \big| \, \wti g(a) = \wti g(b) = 0\big\}.    \lb{2.26}
\end{align}
\end{theorem}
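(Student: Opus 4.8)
The plan is to identify $T_{0,0}$ with the Friedrichs extension by means of the quadratic-form characterization of $T_F$, combined with the principal/nonprincipal solution asymptotics recorded in Theorems \ref{t2.11} and \ref{t2.13}. After replacing $\tau$ by $\tau-\lambda_0$ (equivalently $q$ by $q-\lambda_0 r$), which leaves $\dom(T_{max})$, the generalized boundary values $\wti g(a),\wti g(b)$, and the domain of the Friedrichs extension unchanged, I may assume $T_{min}\geq 0$ and work with the principal solutions $u_a=u_a(0,\dott)$, $u_b=u_b(0,\dott)$ and nonprincipal solutions $\hatt u_a,\hatt u_b$ of $\tau u=0$. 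Let $\mathfrak{a}$ denote the closure of the nonnegative form $g\mapsto\int_a^b[p|g'|^2+q|g|^2]\,dx$, initially defined on $\dom\big(\dot T_{min}\big)$, with form domain $\mathcal{Q}$. I will invoke the standard abstract fact that $T_F$ is the \emph{unique} self-adjoint extension of $T_{min}$ whose operator domain is contained in $\mathcal{Q}$. Since $T_{0,0}$ is already a self-adjoint extension of $T_{min}$ by Theorem \ref{t2.8} (with $\alpha=\beta=0$ the separated conditions read exactly $\wti g(a)=\wti g_1(a)=0$ and $\wti g(b)=\wti g_1(b)=0$), it then suffices to prove the single inclusion $\dom(T_{0,0})\subseteq\mathcal{Q}$.

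To establish this inclusion, fix $g\in\dom(T_{max})$ with $\wti g(a)=\wti g(b)=0$. The first step is to show $g$ has a finite Dirichlet integral. Integrating by parts on $[\alpha,\beta]\subset(a,b)$ yields
\begin{equation}
\int_\alpha^\beta\big[p|g'|^2+q|g|^2\big]\,dx=\int_\alpha^\beta r\,\overline{g}\,(\tau g)\,dx+\big[\overline{g}\,g^{[1]}\big]_\alpha^\beta ,
\end{equation}
and, since $g,\tau g\in\Lr$, the first term on the right converges absolutely as $\alpha\downarrow a$, $\beta\uparrow b$ by Cauchy--Schwarz. Thus finiteness of the Dirichlet integral reduces to the existence of the limits of the boundary term $\overline{g}\,g^{[1]}$ at $a$ and $b$. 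The crux is the dichotomy that the principal solution has a finite Dirichlet integral near the endpoint while the nonprincipal solution does not: using the representation $\hatt u_a=u_a\int_{\dott}^{c}(p u_a^2)^{-1}\,dt$ (legitimate because $\int_a^c(p u_a^2)^{-1}\,dt=\infty$ by \eqref{2.20}) one computes that $\hatt u_a\,\hatt u_a^{[1]}\to\infty$ as $x\downarrow a$, whereas $u_a u_a^{[1]}$ tends to a finite limit. Feeding in the asymptotic decomposition $g=\wti g(a)\,\hatt u_a+\wti g'(a)\,u_a+o(u_a)$ extracted from Theorem \ref{t2.13} and the hypothesis $\wti g(a)=0$, the $\hatt u_a$-component drops out and $\overline{g}\,g^{[1]}$ is seen to have a finite limit at $a$; the identical reasoning applies at $b$.

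The second step is to show that such a $g$ actually lies in $\mathcal{Q}$, i.e.\ that it is approximable in the form norm $\|\dott\|_{\mathfrak{a}}^2=\mathfrak{a}(\dott)+\|\dott\|_{\Lr}^2$ by functions in $\dom\big(\dot T_{min}\big)$. This I would carry out by a cutoff argument, multiplying $g$ by smooth functions equal to $1$ on a compact subinterval and vanishing near the endpoints, with the cutoffs chosen so that, thanks to the principal-solution behaviour $g=O(u_a)$ near $a$ (and $g=O(u_b)$ near $b$) together with the finiteness just established, the resulting errors in the Dirichlet integral tend to $0$. Once $\dom(T_{0,0})\subseteq\mathcal{Q}$ is secured, the abstract uniqueness of the form-associated operator forces $T_{0,0}=T_F$, and since both act as $\tau$ this yields the characterization \eqref{2.26}.

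I expect the main obstacle to be the rigorous justification of the Dirichlet-integral dichotomy and the attendant control of the cross terms in the decomposition of $g$: one must confirm, purely from the defining integral conditions \eqref{2.20}--\eqref{2.21}, that the principal solution contributes a convergent boundary term while any nonzero multiple of the nonprincipal solution forces divergence, and that the $o(u_a)$ remainder does not spoil convergence. The cutoff approximation is routine in spirit but likewise demands care near the singular endpoints to guarantee that the form-norm errors vanish.
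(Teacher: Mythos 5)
The paper does not actually prove Theorem \ref{t2.15}: it is quoted from Niessen--Zettl \cite{NZ92} (with further references to \cite{Ka78}, \cite{MZ00}, \cite{Ro85}, etc.), so there is no in-paper argument to compare against. Your strategy --- shift so that $T_{min}\geq 0$, form the closure $\mathfrak a$ of the minimal quadratic form, invoke the standard fact that $T_F$ is the \emph{only} self-adjoint extension whose operator domain lies in the form domain $\cQ$, and then verify $\dom(T_{0,0})\subseteq\cQ$ via the principal/nonprincipal dichotomy --- is exactly the classical route taken in that literature, and the reduction steps (invariance of $\dom(T_{max})$ and of $\wti g(a),\wti g(b)$ under the shift, self-adjointness of $T_{0,0}$ from Theorem \ref{t2.8}, the abstract uniqueness statement) are all correct.

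However, as written the proposal has genuine gaps, precisely at the points you flag as ``obstacles,'' and these are not routine. First, the dichotomy is asserted rather than proved, and one of its two halves is misstated: from $\hatt u_a=u_a\int_x^c(pu_a^2)^{-1}dt$ one gets $\hatt u_a\hatt u_a^{[1]}=u_au_a^{[1]}\,I(x)^2-I(x)$ with $I(x)=\int_x^c(pu_a^2)^{-1}dt\to\infty$, so the conclusion $\hatt u_a\hatt u_a^{[1]}\to\infty$ (and even the existence of a limit) hinges on first knowing that $u_au_a^{[1]}$ has a finite limit at $a$. That latter claim is equivalent to convergence of $\int_a^c\big[p(u_a')^2+qu_a^2\big]dx$ and is itself a nontrivial theorem; it does \emph{not} follow ``purely from \eqref{2.20}--\eqref{2.21}'' but requires the semiboundedness of $T_{min}$ in an essential way (e.g.\ via a Riccati/Jacobi factorization of $\tau-\lambda_0$ near $a$). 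Second, the control of the boundary term $\overline{g}\,g^{[1]}$ cannot be extracted from the decomposition $g=\wti g(a)\hatt u_a+\wti g^{\,\prime}(a)u_a+o(u_a)$ alone: Theorem \ref{t2.13} gives pointwise asymptotics for $g$, but says nothing about $g^{[1]}$, and $\overline{g}\,g^{[1]}$ involves both. One needs the companion asymptotics for the quasi-derivative (obtainable, e.g., from the variation-of-parameters representation of $g$ in terms of $u_a,\hatt u_a$, and $\tau g$), and in the converse direction one must show that $\wti g(a)\neq 0$ forces divergence, which requires controlling the cross term between the $\hatt u_a$-component and the rest --- this is where the actual work of \cite{NZ92} lies. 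Finally, note that $\int p|g'|^2$ and $\int q|g|^2$ need not converge separately (already for Bessel-type $q$), so both the meaning of ``finite Dirichlet integral'' and the cross terms $2\Re\int p\chi\chi' g\overline{g'}$ in your cutoff approximation must be handled with conditionally convergent integrals; the cutoffs have to be adapted to $u_a$ (e.g.\ built from $\int(pu_a^2)^{-1}$) for the form-norm errors to vanish. In short: right skeleton, but the analytic core of the theorem is exactly what remains unproved.
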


\begin{remark} \lb{r2.16}
$(i)$ As in \eqref{2.16}, one readily verifies for $g, h \in \dom(T_{max})$,
\begin{equation}
\wti g(d) {\wti h}^{\, \prime}(d) - {\wti g}^{\, \prime}(d) \wti h(d) = W(g,h)(d), \quad d \in \{a,b\},    \lb{2.27} 
\end{equation} 
again interpreted in the sense that either side in \eqref{2.27} has a finite limit as $d \downarrow a$ and 
$d \uparrow b$. \\[1mm] 
$(ii)$ As always in this context (cf.\ Remark \ref{r2.9}\,$(i)$), if $\tau$ is in the limit point case at one (or both) interval endpoints, the corresponding boundary conditions at that endpoint are dropped in Theorems \ref{t2.13} and \ref{t2.15}.  
${}$ \hfill $\diamond$
\end{remark}

\section{Krein Resolvent Identities: One Limit Circle Endpoint} \lb{s3}

Assuming that $\tau$ is in the limit circle case at $a$ and in the limit point case at $b$, we derive in this section the Krein resolvent formulas for all self-adjoint extensions of $T_{min}$ using the Friedrichs extension as the reference operator. 

\begin{hypothesis}\lb{h3.1}
In addition to Hypothesis \ref{h2.1} assume that $\tau$ is in the limit circle case at $a$ and in the limit point case at $b$. 
Moreover, for $z\in \rho(T_0)$, let $\psi(z,\dott)$ denote the unique solution to $(\tau -z)y=0$ that satisfies $\psi(z,\dott)\in L_r^2((a,b))$ and $\wti \psi(z,a)=1$.
\end{hypothesis}

Assume Hypothesis \ref{h3.1}. By Theorem \ref{t2.8} or Theorem \ref{t2.13}, the following statements $(i)$ and $(ii)$ hold.\\[1mm]
$(i)$  If $\alpha\in [0,\pi)$, then the operator $T_{\alpha}$ defined by
\begin{equation}\lb{3.1}
\begin{split}
&T_{\alpha}f=T_{max}f,\\
&f\in \dom(T_{\alpha})=\{g\in \dom(T_{max})\,|\, \cos(\alpha)\wti g(a) + \sin(\alpha)\wti g^{\, \prime}(a)=0\}
\end{split}
\end{equation}
is a self-adjoint extension of $T_{min}$.\\[1mm]
$(ii)$  If $T$ is a self-adjoint extension of $T_{min}$, then $T=T_{\alpha}$ for some $\alpha\in[0,\pi)$.\\[2mm]
Statements analogous to $(i)$ and $(ii)$ hold if $\tau$ is in the limit point case at $a$ and in the limit circle case at $b$; for brevity we omit the details.

Choosing $\alpha=0$ in \eqref{3.1} yields the self-adjoint extension $T_0$ with a Dirichlet-type boundary condition at $a$:
\begin{equation}\lb{3.2}
\dom(T_0)=\{g\in\dom(T_{max})\, |\, \wti g(a) = 0\}.
\end{equation}

Since the coefficients $p$, $q$, and $r$ are real-valued, the solution $\psi(z,\dott)$ has the following conjugation property:
\begin{equation}\lb{3.3}
\overline{\psi(z,\dott)} = \psi(\overline{z},\dott),\quad z\in \rho(T_0).
\end{equation}

\begin{theorem}\lb{t3.3}
Assume Hypothesis \ref{h3.1}.  If $\alpha\in (0,\pi)$, then $T_0$ and $T_{\alpha}$ are relatively prime with respect to $T_{\min}$.  Moreover, for each $z\in \rho(T_0)\cap\rho(T_{\alpha})$, the scalar
\begin{equation}\lb{3.4a}
k_{\alpha}(z) = -\cot(\alpha) - \wti \psi^{\, \prime}(z,a)
\end{equation}
is nonzero and
\begin{equation}\lb{3.6}
(T_{\alpha}-zI_{L^2_r((a,b))})^{-1} = (T_0-zI_{L^2_r((a,b))})^{-1} + k_{\alpha}(z)^{-1} (\psi(\overline{z},\dott),\dott)_{L^2_r((a,b))}\psi(z,\dott).
\end{equation}
\end{theorem}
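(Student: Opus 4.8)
My plan is to establish the three assertions in turn: the relatively prime property, the non-vanishing of $k_\alpha(z)$, and finally the resolvent identity \eqref{3.6}, with the last resting on a single Lagrange-type identity that I regard as the crux of the argument.

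For the relatively prime claim, I would show that $\dom(T_0) \cap \dom(T_\alpha) \subseteq \dom(T_{min})$, the reverse inclusion being automatic since $T_0$ and $T_\alpha$ both extend $T_{min}$ and act as $\tau$ (so that the maximal common part is $\tau$ restricted to $\dom(T_0) \cap \dom(T_\alpha)$). If $f$ lies in the intersection, then $\wti f(a) = 0$ from \eqref{3.2} and $\cos(\alpha)\wti f(a) + \sin(\alpha)\wti f^{\, \prime}(a) = 0$ from \eqref{3.1}; since $\alpha \in (0,\pi)$ forces $\sin(\alpha) \neq 0$, these give $\wti f(a) = \wti f^{\, \prime}(a) = 0$. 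Because $\tau$ is in the limit point case at $b$, Theorem \ref{t2.7}\,(i) makes the behavior at $b$ immaterial, and the one-endpoint analogue of the characterization \eqref{2.17} places $f$ in $\dom(T_{min})$.

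Next, fix $z \in \rho(T_0) \cap \rho(T_\alpha)$. From \eqref{3.4a} and $\wti\psi(z,a)=1$ one records $\sin(\alpha)\,k_\alpha(z) = -\big[\cos(\alpha)\wti\psi(z,a) + \sin(\alpha)\wti\psi^{\, \prime}(z,a)\big]$, so $k_\alpha(z)=0$ would force $\psi(z,\dott)$ to satisfy the boundary condition defining $T_\alpha$ at $a$. Since $\psi(z,\dott) \in L^2_r((a,b))$ is a nonzero solution of $(\tau - z)\psi = 0$ lying in $L^2$ near the limit point endpoint $b$, it would then be an eigenfunction of $T_\alpha$ with eigenvalue $z$, contradicting $z \in \rho(T_\alpha)$; hence $k_\alpha(z) \neq 0$. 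To verify \eqref{3.6}, I would fix $w \in L^2_r((a,b))$, set $u_0 = (T_0 - zI)^{-1}w$, and let $u = u_0 + k_\alpha(z)^{-1}(\psi(\bar z,\dott), w)_{L^2_r((a,b))}\,\psi(z,\dott)$ be the right-hand side applied to $w$. As $(\tau - z)\psi(z,\dott) = 0$, we get $(\tau-z)u = (\tau-z)u_0 = w$ and $u \in L^2_r((a,b))$, so only the boundary condition at $a$ remains. Computing $\wti u(a)$ and $\wti u^{\, \prime}(a)$ from the linearity of the generalized boundary values \eqref{2.24}, \eqref{2.25}, using $\wti\psi(z,a)=1$, $\wti u_0(a)=0$, and substituting the identity for $\sin(\alpha)k_\alpha(z)$ above, a short computation collapses $\cos(\alpha)\wti u(a) + \sin(\alpha)\wti u^{\, \prime}(a)$ to $\sin(\alpha)\big[\wti u_0^{\, \prime}(a) - (\psi(\bar z,\dott), w)_{L^2_r((a,b))}\big]$. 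Since $\tau$ is limit point at $b$, this places $u \in \dom(T_\alpha)$, whence $u = (T_\alpha - zI)^{-1}w$, precisely when the bracket vanishes.

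The step I expect to require the most care is therefore the identity $\wti u_0^{\, \prime}(a) = (\psi(\bar z,\dott), w)_{L^2_r((a,b))}$. I would prove it from the Lagrange identity $\int_a^b r\big[(\tau f)g - f(\tau g)\big]\,dx = W(f,g)\big|_a^b$ applied to $f = \psi(z,\dott)$ and $g = u_0$: using $\tau\psi(z,\dott) = z\psi(z,\dott)$ and $\tau u_0 = z u_0 + w$, the left-hand side reduces to $-\int_a^b \psi(z,x)w(x)r(x)\,dx = -(\psi(\bar z,\dott), w)_{L^2_r((a,b))}$ via the conjugation relation \eqref{3.3}. On the right, $W(\psi(z,\dott), u_0)(b) = 0$ by Theorem \ref{t2.7}\,(i), since both $\psi(z,\dott)$ and $u_0$ lie in $\dom(T_{max})$ and $b$ is a limit point endpoint, while the Wronskian relation \eqref{2.27} together with $\wti\psi(z,a)=1$ and $\wti u_0(a)=0$ gives $W(\psi(z,\dott), u_0)(a) = \wti u_0^{\, \prime}(a)$. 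Equating the two sides yields the identity and completes the proof.
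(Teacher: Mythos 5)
Your proof is correct, but it takes a genuinely different route from the paper. The paper disposes of Theorem \ref{t3.3} by translating the setup into the language of boundary condition bases and Lagrange brackets (via \eqref{3.8}, \eqref{3.9}) and then invoking the general Krein resolvent identity of \cite[Theorem 3.4]{AKMNR19} wholesale; no direct verification is given in the text. You instead give a self-contained argument: the relatively-prime claim from $\sin(\alpha)\neq 0$ together with the limit point property at $b$ and the characterization \eqref{2.17} of $\dom(T_{min})$; the nonvanishing of $k_\alpha(z)$ from the observation that $k_\alpha(z)=0$ would make $\psi(z,\dott)$ an eigenfunction of $T_\alpha$; and the resolvent formula by directly checking that the candidate $u=u_0+k_\alpha(z)^{-1}(\psi(\ol{z},\dott),w)_{L^2_r((a,b))}\psi(z,\dott)$ solves $(\tau-z)u=w$ and satisfies the $\alpha$-boundary condition at $a$, the latter reducing to the Lagrange/Green identity $\wti u_0^{\,\prime}(a)=(\psi(\ol{z},\dott),w)_{L^2_r((a,b))}$, which you prove correctly using $W(\psi(z,\dott),u_0)(b)=0$ (Theorem \ref{t2.7}\,$(i)$), the Wronskian relation \eqref{2.27}, and the conjugation property \eqref{3.3}. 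All steps check out, including the algebra collapsing $\cos(\alpha)\wti u(a)+\sin(\alpha)\wti u^{\,\prime}(a)$ to $\sin(\alpha)\big[\wti u_0^{\,\prime}(a)-(\psi(\ol{z},\dott),w)_{L^2_r((a,b))}\big]$. What the paper's approach buys is brevity and a uniform framework that simultaneously covers the two-endpoint cases of Section \ref{s4}; what yours buys is transparency about exactly where each hypothesis enters (limit point at $b$, $\sin(\alpha)\neq 0$, the normalization $\wti\psi(z,a)=1$) and independence from the external reference. Your Lagrange-identity computation is in essence the mechanism hidden inside the cited theorem.
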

\begin{proof}
The claims follow as a direct application of \cite[Theorem 3.4]{AKMNR19} which is stated in terms of boundary conditions bases and the Lagrange bracket.  The condition
\begin{equation}
W(\hatt u_a(\lambda_0,\dott), u_a(\lambda_0,\dott)) = 1
\end{equation}
implies
\begin{equation}\lb{3.8}
\text{$\{u_a(\lambda_0,\dott),\hatt u_a(\lambda_0,\dott)\}$ is a boundary condition basis at $x=a$}
\end{equation}
in the sense of \cite[Definition 2.15]{AKMNR19} and \cite[Definition 10.4.3]{Ze05}.  The generalized boundary values take the form
\begin{equation}\lb{3.9}
[g,u_a(\lambda_0,\dott)](a) = \wti g(a),\quad [g,\hatt u_a(\lambda_0,\dott)](a) = -{\wti g}^{\, \prime}(a),\\
\end{equation}
where $[\dott,\dott]$ denotes the Lagrange bracket:
\begin{equation}\lb{3.9b}
[f,g](x) = f(x)\overline{(pg')(x)} - (pf')(x)\overline{g(x)},\quad x\in (a,b).
\end{equation}
Using the boundary condition basis in \eqref{3.8} and the identities in \eqref{3.9}, the claims now follow from \cite[Theorem 3.4]{AKMNR19} after a standard reparametrizion of the self-adjoint extensions \eqref{3.1} to fit the parametrization used in \cite[Theorem 2.19]{AKMNR19}.
\end{proof}

\section{Krein Resolvent Identities: Two Limit Circle Endpoints} \lb{s4}

Assuming that $\tau$ is in the limit circle case at $a$ and $b$, we now derive the Krein resolvent formulas for all self-adjoint extensions of $T_{min}$ using once more the Friedrichs extension as the reference operator (in this context we also refer to \cite{CGNZ14}). 

\begin{hypothesis}\lb{h4.1}
In addition to Hypothesis \ref{h2.1} assume that $\tau$ is in the limit circle case at $a$ and $b$. Moreover, for $z\in \rho(T_{0,0})$, let $\{u_j(z,\dott)\}_{j=1,2}$ denote solutions to $\tau u = zu$ which satisfy the boundary conditions
\begin{equation}\lb{4.52}
\begin{split}
\wti u_1(z,a)=0, &\quad \wti u_1(z,b)=1,\\
\wti u_2(z,a)=1, &\quad \wti u_2(z,b)=0.
\end{split}
\end{equation}
\end{hypothesis}

Assume Hypotheses \ref{h4.1}. By Theorem \ref{t2.8} or Theorem \ref{t2.13}, the following statements $(i)$--$(iii)$ hold.\\[1mm]
$(i)$  If $\alpha,\beta\in [0,\pi)$, then the operator $T_{\alpha,\beta}$ defined by
\begin{align}
&T_{\alpha,\beta}f = T_{max}f,\lb{4.49}\\
&f\in \dom(T_{\alpha,\beta}) = \bigg\{g\in \dom(T_{max})\,\bigg|\,
\begin{aligned}
\cos(\alpha)\wti g(a)+\sin(\alpha)\wti g^{\, \prime}(a)&=0,\no\\
\cos(\beta)\wti g(b)+\sin(\beta)\wti g^{\, \prime}(b)&=0
\end{aligned}
\bigg\},\no
\end{align}
is a self-adjoint extension of $T_{min}$.\\[1mm]
$(ii)$  If $\varphi\in [0,2\pi)$ and $R\in \SL(2,\bbR)$, then the operator $T_{\varphi,R}$ defined by
\begin{align}
&T_{\varphi,R}f = T_{max}f,\lb{4.50}\\
&f\in \dom(T_{\varphi,R}) = \bigg\{g\in \dom(T_{max})\,\bigg|\,
\begin{pmatrix}
\wti g(b)\\
\wti g^{\, \prime}(b)
\end{pmatrix}
= e^{i\varphi}R
\begin{pmatrix}
\wti g(a)\\
\wti g^{\, \prime}(a)
\end{pmatrix}
\bigg\},\no
\end{align}
is a self-adjoint extension of $T_{min}$.\\[1mm]
$(iii)$  If $T$ is a self-adjoint extension of $T_{min}$, then $T=T_{\alpha,\beta}$ for some $\alpha,\beta\in[0,\pi)$ or $T=T_{\varphi,R}$ for some $\varphi\in [0,2\pi)$ and some $R\in \SL(2,\bbR)$. \\[2mm] 
\noindent 
{\bf Notational Convention.} {\it To describe all possible self-adjoint boundary conditions associated with self-adjoint extensions of $T_{min}$ effectively, we will frequently employ the notation $T_{A,B}$, $M_{A,B}^{Do}(\dott)$, etc., where $A,B$ represents $\alpha,\beta$ in the case of separated boundary conditions and $\varphi,R$ in the context of coupled boundary conditions.}

\smallskip

Choosing $\alpha=\beta=0$ in \eqref{4.49} yields the self-adjoint extension with Dirichlet-type boundary conditions at $a$ and $b$:
\begin{equation}\lb{4.51}
\dom(T_{0,0})=\{g\in\dom(T_{max})\, |\, \wti g(a) = \wti g(b)=0\}.
\end{equation}

Since the coefficients of the Sturm--Liouville differential expression are real, the following conjugation property holds:
\begin{equation}\lb{4.5}
\ol{u_j(z,\dott)} = u_j(\ol{z},\dott),\quad z\in \rho(T_{0,0}),\, j\in\{1,2\}.
\end{equation}
Applying \eqref{4.52}, one computes
\begin{equation}\lb{4.6}
\begin{split}
W(u_1(z,\dott),u_2(z,\dott)(a) &= -\wti u_1^{\, \prime}(z,a),\\
W(u_1(z,\dott),u_2(z,\dott)(b) &= \wti u_2^{\, \prime}(z,b),\quad z\in \rho(T_{0,0}).
\end{split}
\end{equation}
In particular, since the Wronskian of two solutions is constant,
\begin{equation}\lb{4.7}
\wti u_2^{\, \prime}(z,b) = -\wti u_1^{\, \prime}(z,a),\quad z\in \rho(T_{0,0}).
\end{equation}

\begin{theorem}\lb{t4.3}
Assume Hypothesis \ref{h4.1}. Then the following statements $(i)$--$(v)$ hold.\\[1mm]
$(i)$ If $\alpha,\beta\in(0,\pi)$, then $T_{0,0}$ and $T_{\alpha,\beta}$ are relatively prime with respect to $T_{min}$.  Moreover, for each $z\in \rho(T_{0,0})\cap\rho(T_{\alpha,\beta})$ the matrix
\begin{equation}\lb{4.53}
K_{\alpha,\beta}(z) =
\begin{pmatrix}
\cot(\beta)+\wti u_1^{\, \prime}(z,b) & -\wti u_1^{\, \prime}(z,a)\\[2mm]
\wti u_2^{\, \prime}(z,b) & -\cot(\alpha)-\wti u_2^{\, \prime}(z,a)
\end{pmatrix}
\end{equation}
is invertible and
\begin{align}
(T_{\alpha,\beta}-zI_{L^2_r((a,b))})^{-1} &= (T_{0,0}-zI_{L^2_r((a,b))})^{-1}\no\\
&\quad + \sum_{j,k=1}^2\big[K_{\alpha,\beta}(z)^{-1}\big]_{j,k} ( u_j(\ol{z},\dott),\,\cdot\,)_{L^2_r((a,b))} u_k(z,\dott).\lb{4.54}
\end{align}
$(ii)$  If $\beta\in (0,\pi)$, then the maximal common part of $T_{0,0}$ and $T_{0,\beta}$ is the restriction of $T_{max}$ to the set
\begin{equation}\lb{4.55}
\cS_1=\{y\in \dom(T_{max})\,|\, \wti y(a) = \wti y(b) = \wti y^{\, \prime}(b)=0\}.
\end{equation}
Moreover, for each $z\in \rho(T_{0,0})\cap\rho(T_{0,\beta})$ the scalar
\begin{equation}\lb{4.56}
K_{0,\beta}(z)=\cot(\beta)+\wti u_1^{\, \prime}(z,b)
\end{equation}
is nonzero and
\begin{align}
&(T_{0,\beta}-zI_{L^2_r((a,b))})^{-1}\lb{4.57}\\
&\quad= (T_{0,0}-zI_{L^2_r((a,b))})^{-1} + K_{0,\beta}(z)^{-1} ( u_1(\ol{z},\dott),\,\cdot\,)_{L^2_r((a,b))} u_1(z,\dott).\no
\end{align}
$(iii)$  If $\alpha\in (0,\pi)$, then the maximal common part of $T_{0,0}$ and $T_{\alpha,0}$ is the restriction of $T_{max}$ to the set
\begin{equation}\lb{4.58}
\cS_2=\{y\in \dom(T_{max})\,|\, \wti y(a)=\wti y(b)=\wti y^{\, \prime}(a)=0\}.
\end{equation}
Moreover, for each $z\in \rho(T_{0,0})\cap\rho(T_{\alpha,0})$ the scalar
\begin{equation}\lb{4.59}
K_{\alpha,0}(z)=-\cot(\alpha)-\wti u_2^{\, \prime}(z,a)
\end{equation}
is nonzero and
\begin{align}
&(T_{\alpha,0}-zI_{L^2_r((a,b))})^{-1}\lb{4.60}\\
&\quad = (T_{0,0}-zI_{L^2_r((a,b))})^{-1} + K_{\alpha,0}(z)^{-1} ( u_2(\ol{z},\dott),\,\cdot\,)_{L^2_r((a,b))} u_2(z,\dott).\no
\end{align}
$(iv)$ If $R_{1,2}\neq 0$, then $T_{0,0}$ and $T_{\varphi,R}$ are relatively prime with respect to $T_{min}$.  Moreover, for each $z\in \rho(T_{0,0})\cap\rho(T_{\varphi,R})$ the matrix
\begin{equation}\lb{4.61}
K_{\varphi,R}(z) = \begin{pmatrix}
-\dfrac{R_{2,2}}{R_{1,2}}+\wti u_1^{\, \prime}(z,b) & \dfrac{e^{-i\varphi}}{R_{1,2}}-\wti u_1^{\, \prime}(z,a)\\[4mm]
\dfrac{e^{i\varphi}}{R_{1,2}}+\wti u_2^{\, \prime}(z,b) & -\dfrac{R_{1,1}}{R_{1,2}}-\wti u_2^{\, \prime}(z,a)
\end{pmatrix}
\end{equation}
is invertible and
\begin{align}
(T_{\varphi,R}-zI_{L^2_r((a,b))})^{-1} &= (T_{0,0}-zI_{L^2_r((a,b))})^{-1}\lb{4.62}\\
&\quad + \sum_{j,k=1}^2\big[K_{\varphi,R}(z)^{-1}\big]_{j,k} ( u_j(\ol{z},\dott),\,\cdot\,)_{L^2_r((a,b))} u_k(z,\dott).\no
\end{align}
$(v)$ If $R_{1,2}=0$, then the maximal common part of $T_{\varphi,R}$ and $T_{0,0}$ is the restriction of $T_{max}$ to the set
\begin{align}\lb{4.63}
\cS_{\varphi,R}=\{y\in\dom(T_{max})\,|\,\wti y(a)=\wti y(b)=0,\,\wti y^{\, \prime}(b)=e^{i\varphi}R_{2,2}\wti y^{\, \prime}(a)\}.
\end{align}
Moreover, for each $z\in\rho(T_{0,0})\cap\rho(T_{\varphi,R})$, the scalar
\begin{align}\lb{4.64}
k_{\varphi,R}(z)=-R_{2,1}R_{2,2}-e^{i\varphi}R_{2,2}\wti u_{\varphi,R}^{\, \prime}(z,a)+\wti u_{\varphi,R}^{\, \prime}(z,b)
\end{align}
is nonzero, and
\begin{equation}\lb{4.65}
\begin{split}
(T_{\varphi,R}-zI_{L^2_r((a,b))})^{-1}&=(T_{0,0}-zI_{L^2_r((a,b))})^{-1}\\
&\quad+k_{\varphi,R}(z)^{-1} (u_{\varphi,R}(\ol{z},\dott),\dott)_{L^2_r((a,b))}u_{\varphi,R}(z,\dott),
\end{split}
\end{equation}
where
\begin{align}\lb{4.66}
u_{\varphi,R}(\zeta,\dott) = e^{-i\varphi}R_{2,2}u_2(\zeta,\dott)+u_1(\zeta,\dott),\quad \zeta\in \rho(T_{0,0}).
\end{align}
\end{theorem}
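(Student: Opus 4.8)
The plan is to establish all five parts by directly verifying the proposed resolvent identities; the relatively-prime assertions and the descriptions of the maximal common parts in $(i)$--$(v)$ are already furnished by the earlier characterization of maximal common parts (taking $(\alpha',\beta')=(0,0)$ there, and noting that $d(0,0,R)=R_{1,2}$ governs the coupled case), so the genuinely new content is the resolvent formulas together with the invertibility of the matrices $K_{A,B}(z)$ (resp. the non-vanishing of the scalars $k_{A,B}(z)$). Fix $f\in L^2_r((a,b))$, put $w=(T_{0,0}-zI_{L^2_r((a,b))})^{-1}f$, let $R(z)$ denote the operator defined by the right-hand side of the relevant formula, and set $h=R(z)f$. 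Since every $u_k(z,\dott)$ solves $(\tau-z)u=0$ while $w\in\dom(T_{0,0})$ satisfies $(\tau-z)w=f$, one has $(\tau-z)h=f$ at once; hence the whole problem reduces to checking that $h$ satisfies the boundary conditions defining the extension in question.

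The crux --- and the step I expect to be the main obstacle --- is to express the derivative boundary values $\wti w^{\, \prime}(a)$ and $\wti w^{\, \prime}(b)$ of the reference resolvent through the inner products $(u_j(\ol z,\dott),f)_{L^2_r((a,b))}$. For this I would apply the Lagrange identity to the pair $(u_j(z,\dott),w)$, namely $\frac{d}{dx}W(u_j(z,\dott),w)=-r\,u_j(z,\dott)\,f$; integrating over $(a,b)$ and invoking the conjugation property \eqref{4.5} gives $(u_j(\ol z,\dott),f)_{L^2_r((a,b))}=W(u_j(z,\dott),w)(a)-W(u_j(z,\dott),w)(b)$. Expanding each Wronskian by the bilinear identity \eqref{2.27}, and using $\wti w(a)=\wti w(b)=0$ together with the normalizations \eqref{4.52}, this collapses to the two scalar relations $\wti w^{\, \prime}(a)=(u_2(\ol z,\dott),f)_{L^2_r((a,b))}$ and $\wti w^{\, \prime}(b)=-(u_1(\ol z,\dott),f)_{L^2_r((a,b))}$. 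These are the linchpin of the argument.

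With these relations available, I would compute the four generalized boundary values of $h$. Writing the correction term as $\sum_k m_k u_k(z,\dott)$, the normalizations \eqref{4.52} give $\wti h(a)=m_2$ and $\wti h(b)=m_1$, while $\wti h^{\, \prime}(a)$ and $\wti h^{\, \prime}(b)$ are affine in $(m_1,m_2)$ with inhomogeneous terms $(u_2(\ol z,\dott),f)$ and $-(u_1(\ol z,\dott),f)$. For part $(i)$, imposing the separated conditions of $T_{\alpha,\beta}$ and dividing by $\sin(\alpha),\sin(\beta)\neq0$ yields a $2\times2$ system for $(m_1,m_2)$; after substituting the Wronskian-constancy relation \eqref{4.7} this system reads $K_{\alpha,\beta}(z)(m_1,m_2)^\top=\big((u_1(\ol z,\dott),f),(u_2(\ol z,\dott),f)\big)^\top$, and because $K_{\alpha,\beta}(z)$ is then symmetric its solution reproduces the coefficients $\sum_j[K_{\alpha,\beta}(z)^{-1}]_{j,k}(u_j(\ol z,\dott),f)$ of \eqref{4.54}. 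Parts $(ii)$, $(iii)$ are the degenerate specializations $\alpha=0$ or $\beta=0$, in which the correction drops to rank one; part $(v)$ proceeds identically once one observes that for $R_{1,2}=0$ the first coupled condition $\wti h(b)=e^{i\varphi}R_{1,1}\wti h(a)$ holds automatically (since $R_{1,1}R_{2,2}=1$), leaving a single scalar equation whose coefficient is exactly $k_{\varphi,R}(z)$ and whose right-hand side equals $(u_{\varphi,R}(\ol z,\dott),f)$. Part $(iv)$ is the same bookkeeping for the full coupled conditions, dividing through by $R_{1,2}\neq0$ to produce $K_{\varphi,R}(z)$ in \eqref{4.61}; it is the most computation-heavy case but introduces no new idea.

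It remains to secure invertibility of $K_{A,B}(z)$ (resp. $k_{A,B}(z)\neq0$) and then to identify $h$ with the resolvent. Invertibility is precisely the homogeneous instance $f=0$, $w=0$ of the systems above: a nontrivial solution $(m_1,m_2)$ of $K_{A,B}(z)(m_1,m_2)^\top=0$ would make $m_1 u_1(z,\dott)+m_2 u_2(z,\dott)$ a nonzero element of $L^2_r((a,b))$ (all solutions are square integrable since both endpoints are limit circle) lying in $\ker(T_{A,B}-zI_{L^2_r((a,b))})$, contradicting $z\in\rho(T_{A,B})$; the same reasoning handles the rank-one cases. Granting invertibility, the boundary-value computation shows $h=R(z)f\in\dom(T_{A,B})$ with $(T_{A,B}-zI_{L^2_r((a,b))})R(z)=I_{L^2_r((a,b))}$, and composing on the left with the bounded operator $(T_{A,B}-zI_{L^2_r((a,b))})^{-1}$ yields $R(z)=(T_{A,B}-zI_{L^2_r((a,b))})^{-1}$, completing every part.
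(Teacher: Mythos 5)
Your proposal is correct, but it takes a genuinely different route from the paper. The paper's proof of Theorem \ref{t4.3} is essentially a citation: it observes that \eqref{2.22} makes $\{u_c(\lambda_0,\dott),\hatt u_c(\lambda_0,\dott)\}$ a boundary condition basis at $c\in\{a,b\}$, identifies the generalized boundary values $\wti g(c)$, $-\wti g^{\,\prime}(c)$ with Lagrange brackets as in \eqref{4.69}, and then imports the Krein identities wholesale from \cite[Theorems 4.4--4.7]{AKMNR19} after a reparametrization of the extensions. You instead verify the resolvent formulas directly: the Lagrange-identity computation $(u_j(\ol z,\dott),f)_{L^2_r((a,b))}=W(u_j(z,\dott),w)(a)-W(u_j(z,\dott),w)(b)$, combined with $\wti w(a)=\wti w(b)=0$ and the normalizations \eqref{4.52}, correctly yields $\wti w^{\,\prime}(a)=(u_2(\ol z,\dott),f)$ and $\wti w^{\,\prime}(b)=-(u_1(\ol z,\dott),f)$, and from there the boundary conditions reduce to the linear systems governed by $K_{A,B}(z)$ (your appeal to the symmetry of $K_{\alpha,\beta}(z)$, via \eqref{4.7}, to reconcile the index order $[K^{-1}]_{j,k}$ in \eqref{4.54} is the right observation, and your invertibility argument via a would-be eigenfunction $m_1u_1(z,\dott)+m_2u_2(z,\dott)\in\ker(T_{A,B}-zI)$ is sound since both endpoints are limit circle). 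Your reduction of the relatively-prime and maximal-common-part claims to the unnumbered theorem in Section \ref{s2} with $(\alpha',\beta')=(0,0)$ and $d(0,0,R)=R_{1,2}$ is also legitimate and is arguably cleaner than re-deriving them. What the paper's approach buys is brevity and uniformity with \cite{AKMNR19}; what yours buys is a self-contained argument that makes the provenance of each entry of $K_{A,B}(z)$ transparent, at the cost of the (routine but lengthy) bookkeeping in cases $(iv)$ and $(v)$. One phrasing to tighten: in case $(v)$ the first coupled condition is not automatic for an arbitrary correction $m_1u_1+m_2u_2$; it becomes automatic (via $R_{1,1}R_{2,2}=1$) only after the correction is constrained to be a multiple of $u_{\varphi,R}(z,\dott)$ as in \eqref{4.65}--\eqref{4.66}, which is how you are in fact using it.
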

\begin{proof}
Statements $(i)$--$(v)$ are direct applications of the Krein identities for singular Sturm--Liouville operators obtained in \cite{AKMNR19} which are stated in terms of boundary conditions bases and the Lagrange bracket.  The conditions
\begin{equation}
W(\hatt u_a(\lambda_0,\dott), u_a(\lambda_0,\dott)) = W(\hatt u_b(\lambda_0,\dott), u_b(\lambda_0,\dott)) = 1
\lb{4.67}
\end{equation}
imply that
\begin{equation}\lb{4.68}
\text{$\{u_c(\lambda_0,\dott),\hatt u_c(\lambda_0,\dott)\}$ is a boundary condition basis at $x=c$ for $c\in \{a,b\}$}
\end{equation}
in the sense of \cite[Definition 2.15]{AKMNR19} and \cite[Definition 10.4.3]{Ze05}.  The generalized boundary values take the form
\begin{equation}\lb{4.69}
\begin{split}
&[g,u_a(\lambda_0,\dott)](a) = \wti g(a),\quad [g,u_b(\lambda_0,\dott)](b) = \wti g(b),\\
&[g,\hatt u_a(\lambda_0,\dott)](a) = -{\wti g}^{\, \prime}(a),\quad [g,\hatt u_b(\lambda_0,\dott)](b) = -{\wti g}^{\, \prime}(b),
\end{split}
\end{equation}
where $[\dott,\dott]$ denotes the Lagrange bracket (see \eqref{3.9b}).  Using the boundary condition bases in \eqref{4.68} and the identities in \eqref{4.69}, statements $(i)$--$(v)$ now follow from \cite[Theorems 4.4, 4.5, 4.6, and 4.7]{AKMNR19} after a standard reparametrizion of the self-adjoint extensions \eqref{4.49} and \eqref{4.50} to fit the parametrization used in \cite[Theorem 2.20]{AKMNR19}.
\end{proof}

\begin{remark}
As an illustration of Theorem \ref{t4.3}, we consider the Krein extension, $T_{0,R_K}$, under the additional assumption that $T_{min} \geq \varepsilon I_{_{L^2_r((a,b))}}$ for some 
$\varepsilon > 0$. Then applying \cite[Thm. 3.5 $(ii)$]{FGKLNS21} and Theorem \ref{t4.3} $(iv)$, one computes for the matrix $K_{0,R_K}$ in \eqref{4.61}, 
\begin{equation}\lb{4.25a}
K_{0,R_K}(z) = \begin{pmatrix}
\wti u_1^{\, \prime}(z,b)-\wti u_1^{\, \prime}(0,b) & \wti u_1^{\, \prime}(0,a)-\wti u_1^{\, \prime}(z,a)\\[4mm]
\wti u_2^{\, \prime}(z,b)-\wti u_2^{\, \prime}(0,b) & \wti u_2^{\, \prime}(0,a)-\wti u_2^{\, \prime}(z,a)
\end{pmatrix}, \quad z\in \rho(T_{0,0})\cap\rho(T_{0,R_K}), 
\end{equation}
where we note that $0 \in \sigma(T_{0,R_K})$. \hfill$\diamond$
\end{remark}

\section{Donoghue $m$-functions: One Limit Circle Endpoint} \lb{s5}

In this section we construct the Donoghue $m$-functions in the case where $\tau$ is in the limit circle case at precisely one endpoint (which we choose to be $a$ without loss of generality). We first focus on the Friedrichs extension of 
$T_{min}$ and then use the Krein resolvent formulas from Section \ref{s3} to treat all remaining self-adjoint extensions of $T_{min}$.

Throughout this section we shall assume that Hypothesis \ref{h3.1} holds so that $\tau$ is in the limit circle case at $a$ and in the limit point case at $b$.  We begin by obtaining a general expression for the Donoghue $m$-function of an arbitrary self-adjoint extension $T_\a$ of $T_{min}$ in terms of a unit vector $\phi(i,\dott)\in \cN_i$.  This general expression will then be made more explicit in terms of $\psi(i,\dott)$ (cf.~Hypothesis \ref{h3.1}) in the analysis below.  The Donoghue $m$-function for $T_\a$ is given by (see, e.g., \cite[Eq.~(5.5)]{GNWZ19})
\begin{align}
M_{T_\a,\, \cN_i}^{Do}(z)&=P_{\cN_i}\big(zT_\a+I_{L^2_r((a,b))}\big)(T_\a-zI_{L^2_r((a,b))})^{-1}P_{\cN_i}\big|_{\cN_i}\no\\
&=zI_{\cN_i}+\big(z^2+1\big)P_{\cN_i}(T_\a-zI_{L^2_r((a,b))})^{-1}P_{\cN_i}\big|_{\cN_i},\quad z\in \bbC\backslash\bbR,\lb{6.1}
\end{align}
where $P_{\cN_i}$ denotes the orthogonal projection onto $\cN_i$.  According to \eqref{6.1},
\begin{equation}\lb{6.2}
M_{T_\a,\, \cN_i}^{Do}(z)\in \cB(\cN_i),\ z\in\bbC\backslash\bbR, \text{ and } M_{T_\a,\, \cN_i}^{Do}(\pm i) = \pm i I_{\cN_i}.
\end{equation}
The unit vector $\phi(i,\dott)$ spans the one-dimensional subspace $\cN_i$, so the orthogonal projection onto $\cN_i$ is
\begin{equation}
P_{\cN_i} = (\phi(i,\dott),\dott)_{L^2_r((a,b))}\phi(i,\dott).
\end{equation}
Thus, the action of $M_{T_\a,\, \cN_i}^{Do}(\dott)$ may be computed directly in terms of $\phi(i,\dott)$ as follows:
\begin{align}
&M_{T_\a,\, \cN_i}^{Do}(z)f\lb{6.4}\\
&\quad = \big[zI_{\cN_i} + \big(z^2+1\big)P_{\cN_i}(T_\a-zI_{L^2_r((a,b))})^{-1}P_{\cN_i}\big|_{\cN_i}\big]f\no\\
&\quad = zf + \big(z^2+1\big)P_{\cN_i}(T_\a-zI_{L^2_r((a,b))})^{-1}f\no\\
&\quad = \big(\phi(i,\dott),\big[zI_{\cN_i} + \big(z^2+1\big) (T_\a-zI_{L^2_r((a,b))})^{-1}\big]f\big)_{L_r^2((a,b))}\phi(i,\dott)\no\\
&\quad = \big(\phi(i,\dott),\big[zI_{\cN_i} + \big(z^2+1\big) (T_\a-zI_{L^2_r((a,b))})^{-1}\big]\phi(i,\dott)\big)_{L_r^2((a,b))}\no\\
&\qquad \times(\phi(i,\dott),f)_{L_r^2((a,b))}\phi(i,\dott)\no\\
&\quad = \big[z + \big(z^2+1\big)\big(\phi(i,\dott), (T_\a-zI_{L^2_r((a,b))})^{-1}\phi(i,\dott)\big)_{L_r^2((a,b))}\big]\no\\
&\qquad \times(\phi(i,\dott),f)_{L_r^2((a,b))}\phi(i,\dott),\quad f\in \cN_i,\, z\in \bbC\backslash\bbR,\no
\end{align}
where one uses $f=(\phi(i,\dott),f)_{L^2_r((a,b))}\phi(i,\dott)$ to obtain the fourth equality in \eqref{6.4}.  Hence,
\begin{align}
M_{T_\a,\, \cN_i}^{Do}(z)&= \big[z + \big(z^2+1\big)\big(\phi(i,\dott), (T_\a-zI_{L^2_r((a,b))})^{-1}\phi(i,\dott)\big)_{L_r^2((a,b))}\big]\lb{6.5}\\
&\quad \times(\phi(i,\dott),\dott)_{L_r^2((a,b))}\phi(i,\dott)\big|_{\cN_i},\quad z\in \bbC\backslash\bbR.\no
\end{align}
In order to determine $M_{T_\a,\, \cN_i}^{Do}(\dott)$ in terms of $\psi(i,\dott)$, one must compute the fixed inner product in \eqref{6.5}.  That is, one must compute
\begin{equation}\lb{6.6}
\big(\phi(i,\dott), (T_\a-zI_{L^2_r((a,b))})^{-1}\phi(i,\dott)\big)_{L_r^2((a,b))},\quad z\in \bbC\backslash\bbR.
\end{equation}
In light of \eqref{6.2}, it suffices to compute \eqref{6.6} under the additional assumption that $z\neq \pm i$.  We will first do this for the Dirichlet-type extension $T_0$ (cf.~\eqref{3.2}).

\subsection{The Donoghue \textit{m}-function \textit{M}$\mathstrut_{T_0,\, \cN_i}^{Do}(\dott)$ for \textit{T}$\mathstrut_0$}

Here we shall consider the Dirichlet-type self-adjoint extension $T_0$ of $T_{min}$.  Assuming Hypothesis \ref{h3.1} and taking
\begin{equation}
T_\a=T_0\quad \text{and}\quad \phi(i,\dott):=\|\psi(i,\dott)\|_{L_r^2((a,b))}^{-1}\psi(i,\dott),
\end{equation}
we shall compute the inner product \eqref{6.6} and use \eqref{6.5} to obtain an explicit expression for the Donoghue $m$-function $M_{T_0,\, \cN_i}^{Do}(\dott)$ for $T_0$ in terms of $\psi(i,\dott)$.

For the purposes of evaluating the inner product \eqref{6.6}, we introduce the generalized Cayley transform of $T_{0}$,
\begin{align}
U_{0,z,z'} &= (T_0-z'I_{L^2_r((a,b))})(T_0-zI_{L^2_r((a,b))})^{-1}\lb{6.8}\\
&= I_{L^2_r((a,b))} + (z-z')(T_0-zI_{L^2_r((a,b))})^{-1},\quad z,z'\in \rho(T_0),\no
\end{align}
which forms a bijection from $\cN_{z'}$ to $\cN_z$.  One verifies that
\begin{equation}\lb{6.9}
U_{0,z,z'}\psi(z',\dott) = \psi(z,\dott),\quad z,z'\in \rho(T_0).
\end{equation}
In fact, for fixed $z,z'\in \rho(T_0)$, one uses the fact that $U_{0,z,z'}$ maps into $\cN_z$ to write
\begin{equation}\lb{6.10}
U_{0,z,z'}\psi(z',\dott) = c_0 \psi(z,\dott)
\end{equation}
for some scalar $c_0\in \bbC$.  The second equality in \eqref{6.8} then implies
\begin{equation}
U_{0,z,z'}\psi(z',\dott) = \psi(z',\dott) + (z-z')(T_0-zI_{L^2_r((a,b))})^{-1}\psi(z',\dott),
\end{equation}
so that
\begin{equation}\lb{6.12}
[U_{0,z,z'}\psi(z',\dott)]\;\wti{}\, (a) = \wti \psi(z',a) = 1.
\end{equation}
Taking the generalized boundary value at $a$ throughout \eqref{6.10} and using \eqref{6.12} yields $c_0=1$ in \eqref{6.10}, and \eqref{6.9} follows.

Let $z\in \bbC\backslash\bbR$ with $z\neq \pm i$ be fixed.  Applying \eqref{6.8}, one computes:
\begin{align}
&\big(\phi(i,\dott), (T_0-zI_{L^2_r((a,b))})^{-1}\phi(i,\dott)\big)_{L_r^2((a,b))}\lb{6.13}\\
&\quad = \frac{\big(\psi(i,\dott), (T_0-zI_{L^2_r((a,b))})^{-1}\psi(i,\dott)\big)_{L_r^2((a,b))}}{\|\psi(i,\dott)\|_{L_r^2((a,b))}^{2}}\no\\
&\quad = \frac{(\psi(i,\dott), [U_{0,z,i}-I_{L_r^2((a,b))}]\psi(i,\dott))_{L_r^2((a,b))}}{(z-i)\|\psi(i,\dott)\|_{L_r^2((a,b))}^{2}}\no\\
&\quad = \frac{1}{i-z} + \frac{(\psi(i,\dott), \psi(z,\dott))_{L_r^2((a,b))}}{(z-i)\|\psi(i,\dott)\|_{L_r^2((a,b))}^{2}}.\no
\end{align}
Furthermore, by \eqref{2.1b}  and Theorem \ref{t2.7} $(i)$,
\begin{align}
& (\psi(i,\dott),\psi(z,\dott))_{L_r^2((a,b))} = \int_a^b r(x)dx\, \psi(-i,x)\psi(z,x)\lb{6.22}\\
& \quad =-\frac{W(\psi(-i,\dott),\psi(z,\dott))|_a^b}{z+i} 
= \frac{\wti \psi^{\, \prime}(z,a)-\wti \psi^{\, \prime}(-i,a)}{z+i},\no
\end{align}
where we have used that since $\tau$ is in the limit point case at $b$ and $\psi(-i,\dott),\psi(z,\dott)\in \dom(T_{max})$, an application of Theorem \ref{t2.7} $(i)$ yields
\begin{equation}\lb{6.15}
W(\psi(-i,\dott),\psi(z,\dott))(b) = 0,
\end{equation}
and by Hypothesis \ref{h3.1}, $\wti \psi(-i,a)=\wti \psi(z,a)=1$, so that
\begin{equation}\lb{6.16}
W(\psi(-i,\dott),\psi(z,\dott))(a) = \wti \psi^{\, \prime}(z,a)-\wti \psi^{\, \prime}(-i,a).
\end{equation}
Therefore, \eqref{6.13}--\eqref{6.16} yield
\begin{equation}\lb{6.17}
\big(\phi(i,\dott), (T_0-zI_{L^2_r((a,b))})^{-1}\phi(i,\dott))_{L_r^2((a,b))} 
= \frac{1}{i-z} + \frac{\wti \psi^{\, \prime}(z,a)-\wti \psi^{\, \prime}(-i,a)}{\big(z^2+1\big)\|\psi(i,\dott)\|_{L^2_r((a,b))}^2}.
\end{equation}
By \eqref{2.1b}, Hypothesis \ref{h3.1}, and the limit point assumption at $b$,
\begin{align}
& \|\psi(i,\dott)\|_{L_r^2((a,b))}^2 = \int_a^br(x)dx\, \psi(-i,x)\psi(i,x) = -\frac{W(\psi(-i,\dott),\psi(i,\dott))|_a^b}{2i}\no\\
& \quad = \frac{1}{2i}\big[\wti \psi^{\, \prime}(i,a) - \wti \psi^{\, \prime}(-i,a)\big] 
= \Im\big(\wti \psi^{\, \prime}(i,a) \big).   \lb{6.18}
\end{align}

Applying \eqref{6.17} in \eqref{6.4} and taking simplifications and \eqref{6.18} into account, one obtains the following fact. 

\begin{theorem}\lb{t6.1}
Assume Hypothesis \ref{h3.1}.  The Donoghue $m$-function $M_{T_0,\, \cN_i}^{Do}(\dott):\bbC\backslash\bbR\to \cB(\cN_i)$
for $T_0$ satisfies 
\begin{align}
\begin{split} 
M_{T_0,\, \cN_i}^{Do}(\pm i) &= \pm i I_{\cN_i},     \\
M_{T_0,\, \cN_i}^{Do}(z) &= \Bigg[-i +  \frac{\wti \psi^{\, \prime}(z,a)-\wti \psi^{\, \prime}(-i,a)}{\Im\big(\wti \psi^{\, \prime}(i,a)\big)}\Bigg]I_{\cN_i},\quad z\in \bbC\backslash\bbR,\, z\neq \pm i.\lb{6.20}
\end{split} 
\end{align}
\end{theorem}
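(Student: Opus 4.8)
The plan is to observe that every ingredient needed has already been assembled in \eqref{6.5}, \eqref{6.17}, and \eqref{6.18}, so that the proof reduces to a single substitution followed by one algebraic simplification. First I would note that since $\cN_i$ is one-dimensional and $\phi(i,\dott)$ is a unit vector spanning it, the rank-one operator $(\phi(i,\dott),\dott)_{L^2_r((a,b))}\phi(i,\dott)\big|_{\cN_i}$ appearing in \eqref{6.5} is simply the identity $I_{\cN_i}$. Consequently, for $z\in\bbC\backslash\bbR$ with $z\neq\pm i$, formula \eqref{6.5} collapses to
\begin{equation*}
M_{T_0,\,\cN_i}^{Do}(z)=\Big[z+\big(z^2+1\big)\big(\phi(i,\dott),(T_0-zI_{L^2_r((a,b))})^{-1}\phi(i,\dott)\big)_{L^2_r((a,b))}\Big]I_{\cN_i}.
\end{equation*}

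Next I would substitute the value of the inner product from \eqref{6.17} into this scalar coefficient. The factor $\big(z^2+1\big)$ cancels the denominator of the second summand in \eqref{6.17}, leaving the coefficient
\begin{equation*}
z+\frac{z^2+1}{i-z}+\frac{\wti \psi^{\, \prime}(z,a)-\wti \psi^{\, \prime}(-i,a)}{\|\psi(i,\dott)\|_{L^2_r((a,b))}^2}.
\end{equation*}
The key algebraic step, and essentially the only computational content of the proof, is the identity
\begin{equation*}
z+\frac{z^2+1}{i-z}=z+\frac{(z-i)(z+i)}{i-z}=z-(z+i)=-i,
\end{equation*}
which uses the factorization $z^2+1=(z-i)(z+i)$ to produce a constant (rather than $z$-dependent) term. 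This reduces the coefficient to $-i+[\wti \psi^{\, \prime}(z,a)-\wti \psi^{\, \prime}(-i,a)]/\|\psi(i,\dott)\|_{L^2_r((a,b))}^2$. Finally I would insert the norm computation \eqref{6.18}, namely $\|\psi(i,\dott)\|_{L^2_r((a,b))}^2=\Im\big(\wti \psi^{\, \prime}(i,a)\big)$, to arrive at the claimed formula \eqref{6.20} in the regime $z\neq\pm i$.

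For the two exceptional points $z=\pm i$, I would not attempt to take limits in the expression above; instead the values $M_{T_0,\,\cN_i}^{Do}(\pm i)=\pm i\,I_{\cN_i}$ follow at once from the normalization \eqref{6.2}, which is valid for every self-adjoint extension and in particular for $T_0$. Since no genuine difficulty arises at any stage, there is no real obstacle here; the only point demanding care is bookkeeping the cancellation of $\big(z^2+1\big)$ and confirming that the resulting constant is exactly $-i$, which is precisely what the factorization above guarantees.
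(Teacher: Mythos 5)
Your proposal is correct and follows exactly the route the paper takes: the paper's own (one-line) proof consists of substituting \eqref{6.17} into \eqref{6.4}/\eqref{6.5}, invoking \eqref{6.18}, and performing the same cancellation $z+\big(z^2+1\big)/(i-z)=-i$, with the values at $z=\pm i$ supplied by \eqref{6.2}. Nothing is missing and no further comment is needed.
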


\subsection{The Donoghue \textit{m}-function for Self-Adjoint Extensions Other Than \textit{T}$\mathstrut_0$}
The Donoghue $m$-function for $T_0$ was computed explicitly in Theorem \ref{t6.1}.  If $T_{\alpha}$, $\alpha\in (0,\pi)$, is any other self-adjoint extension of $T_{min}$, then the resolvent identity in Theorem \ref{t3.3} may be used to obtain an explicit representation of the Donoghue $m$-function $M_{T_\a,\, \cN_i}^{Do}(\dott)$ for $T_\a$.

\begin{theorem}\lb{t6.2}
Assume Hypothesis \ref{h3.1} and let $\alpha\in (0,\pi)$.  The Donoghue $m$-function $M_{T_\a,\, \cN_i}^{Do}(\dott):\bbC\backslash\bbR\to \cB(\cN_i)$
for $T_{\alpha}$ satisfies 
\begin{align}
M_{T_\a,\, \cN_i}^{Do}(\pm i) &= \pm i I_{\cN_i},   \no \\
M_{T_\a,\, \cN_i}^{Do}(z) &= M_{T_0,\, \cN_i}^{Do}(z)   \lb{6.25} \\
&\quad + (i-z)\frac{\wti \psi^{\, \prime}(z,a)-\wti \psi^{\, \prime}(-i,a)}{\cot(\alpha) + \wti \psi^{\, \prime}(z,a)} (\psi(\overline{z},\dott),\dott)_{L_r^2((a,b))}\psi(i,\dott)\bigg|_{\cN_i},\no\\
&\hspace*{6.8cm}z\in\bbC\backslash\bbR,\, z\neq \pm i.  \no
\end{align}
\end{theorem}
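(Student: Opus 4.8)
\emph{The plan} is to read off $M^{Do}_{T_\a,\,\cN_i}(\dott)$ from the one-dimensional master formula \eqref{6.5}, in which the entire operator is reduced to the single scalar inner product \eqref{6.6}, and then to evaluate that inner product by inserting the Krein resolvent identity \eqref{3.6} of Theorem~\ref{t3.3}. Since \eqref{6.2} already records $M^{Do}_{T_\a,\,\cN_i}(\pm i)=\pm i\,I_{\cN_i}$, I would dispose of the points $z=\pm i$ immediately and carry out the main computation for fixed $z\in\bbC\backslash\bbR$ with $z\neq\pm i$.

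Substituting \eqref{3.6} into \eqref{6.6} splits the inner product into two pieces. The piece coming from $(T_0-zI_{L^2_r((a,b))})^{-1}$ is precisely the quantity evaluated en route to Theorem~\ref{t6.1}, so routing it back through \eqref{6.5} reproduces $M^{Do}_{T_0,\,\cN_i}(z)$ as in \eqref{6.20}; this is the first summand of \eqref{6.25}. The remaining piece is the rank-one term $k_\a(z)^{-1}(\psi(\ol z,\dott),\dott)_{L^2_r((a,b))}\psi(z,\dott)$ compressed between the projections $P_{\cN_i}$, which factors as $k_\a(z)^{-1}$ times the product of the two scalars $(\psi(\ol z,\dott),\phi(i,\dott))_{L^2_r((a,b))}$ and $(\phi(i,\dott),\psi(z,\dott))_{L^2_r((a,b))}$, all multiplied by the prefactor $(z^2+1)$ of \eqref{6.5}.

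The heart of the argument is the evaluation of these inner products, which I would carry out exactly as in \eqref{6.22}: use the conjugation property \eqref{3.3} to rewrite $\psi(\ol z,\dott)$ as $\ol{\psi(z,\dott)}$, apply the Wronskian integral identity \eqref{2.1b}, invoke the limit point hypothesis at $b$ via Theorem~\ref{t2.7}\,$(i)$ to annihilate the Wronskian contribution at $b$, and use the boundary-value/Wronskian relation \eqref{2.27} together with the normalization $\wti\psi(z,a)=\wti\psi(\pm i,a)=1$ from Hypothesis~\ref{h3.1}. This yields $(\psi(i,\dott),\psi(z,\dott))_{L^2_r((a,b))}=[\wti\psi^{\,\prime}(z,a)-\wti\psi^{\,\prime}(-i,a)]/(z+i)$ and, symmetrically, $(\psi(\ol z,\dott),\psi(i,\dott))_{L^2_r((a,b))}=[\wti\psi^{\,\prime}(z,a)-\wti\psi^{\,\prime}(i,a)]/(z-i)$. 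I would then collect the factors, using $k_\a(z)=-\cot(\a)-\wti\psi^{\,\prime}(z,a)$ from \eqref{3.4a} and the factorization $z^2+1=(z-i)(z+i)$ to cancel the denominators against the prefactor, arriving at the rank-one shape displayed in \eqref{6.25}.

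The main obstacle I anticipate is the normalization bookkeeping rather than any single hard estimate. One must move consistently between the unit vector $\phi(i,\dott)=\|\psi(i,\dott)\|^{-1}_{L^2_r((a,b))}\psi(i,\dott)$ spanning $\cN_i$ and the solution $\psi(i,\dott)$ canonically normalized by $\wti\psi(i,a)=1$, tracking the constant $\|\psi(i,\dott)\|^2_{L^2_r((a,b))}=\Im(\wti\psi^{\,\prime}(i,a))$ from \eqref{6.18} throughout; this same constant already sits in the denominator of $M^{Do}_{T_0,\,\cN_i}(z)$ in \eqref{6.20}, and projecting the range vector $\psi(z,\dott)$ of the Krein perturbation back onto $\cN_i$ reintroduces it. Getting this bookkeeping right is exactly what pins down the scalar coefficient multiplying the rank-one operator $(\psi(\ol z,\dott),\dott)_{L^2_r((a,b))}\psi(i,\dott)\big|_{\cN_i}$ in \eqref{6.25}. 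As a final consistency check, since $\cN_i$ is one-dimensional I would confirm that the correction term vanishes at $z=\pm i$, recovering $M^{Do}_{T_\a,\,\cN_i}(\pm i)=\pm i\,I_{\cN_i}$ in accordance with \eqref{6.2}.
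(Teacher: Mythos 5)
Your proposal matches the paper's proof essentially step for step: fix $z\in\bbC\backslash\bbR$ with $z\neq\pm i$, insert the Krein resolvent identity \eqref{3.6} into \eqref{6.1}/\eqref{6.5}, evaluate the resulting cross inner product $(\psi(i,\dott),\psi(z,\dott))_{L^2_r((a,b))}$ via \eqref{2.1b}, the limit-point property at $b$ (Theorem \ref{t2.7}\,$(i)$), and the normalization $\wti\psi(\dott,a)=1$ exactly as in \eqref{6.22}, then substitute $k_\alpha(z)$ from \eqref{3.4a}. The only cosmetic difference is that the paper leaves the perturbation in the rank-one form $(\psi(\ol z,\dott),\dott)_{L^2_r((a,b))}\psi(i,\dott)\big|_{\cN_i}$ rather than reducing it to a scalar multiple of $I_{\cN_i}$, so only the single inner product arising from $P_{\cN_i}\psi(z,\dott)$ is actually needed.
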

\begin{proof}
Let $\alpha\in (0,\pi)$ be fixed.  By \eqref{6.2}, $M_{T_\a,\, \cN_i}^{Do}(\pm i)=\pm iI_{\cN_i}$.  In order to establish \eqref{6.25}, let $z\in \bbC\backslash\bbR$, $z\neq \pm i$, be fixed.  Considering \eqref{6.1} and invoking \eqref{3.6}, one obtains
\begin{align}
M_{T_\a,\, \cN_i}^{Do}(z) &= M_{T_0,\, \cN_i}^{Do}(z) + \big(z^2+1\big)k_{\alpha}(z)^{-1} (\psi(\overline{z},\dott),\dott)_{L_r^2((a,b))}P_{\cN_i}\psi(z,\dott)\big|_{\cN_i}\lb{6.21}\\
&= M_{T_0,\, \cN_i}^{Do}(z)\no\\
& + \big(z^2+1\big)k_{\alpha}(z)^{-1} (\psi(i,\dott),\psi(z,\dott))_{L_r^2((a,b))} (\psi(\overline{z},\dott),\dott)_{L_r^2((a,b))}\psi(i,\dott)\big|_{\cN_i}.\no
\end{align}
Using \eqref{6.22} in \eqref{6.21}, one obtains
\begin{align}
M_{T_\a,\, \cN_i}^{Do}(z) &= M_{T_0,\, \cN_i}^{Do}(z)\lb{6.23}\\
&\quad + (z-i)\frac{\wti \psi^{\, \prime}(z,a)-\wti \psi^{\, \prime}(-i,a)}{k_{\alpha}(z)} (\psi(\overline{z},\dott),\dott)_{L_r^2((a,b))}\psi(i,\dott)\bigg|_{\cN_i}.\no
\end{align}
Finally, \eqref{6.25} follows from \eqref{6.23} after using the precise form for $k_{\alpha}(z)$ in \eqref{3.4a}.
\end{proof}

\section{Donoghue $m$-functions: Two Limit Circle Endpoints} \lb{s6}

The construction of Donoghue $m$-functions in the case where $\tau$ is in the limit circle case at $a$ and $b$ is the primary aim of this section. Once more we first focus on the Friedrichs extension of $T_{min}$ and then use the Krein resolvent formulas from Section \ref{s4} to treat all remaining self-adjoint extensions of $T_{min}$.

Throughout this section, we shall assume that Hypothesis \ref{h4.1} holds so that $\tau$ is in the limit circle case at $a$ and $b$. We begin by obtaining a general expression for the Donoghue $m$-function of an arbitrary self-adjoint extension $T_{A,B}$ of $T_{min}$ in terms of an orthonormal basis for $\cN_i$.  Recall that the Donoghue $m$-function for $T_{A,B}$ is given by (see, e.g., \cite[Eq.~(5.5)]{GNWZ19})
\begin{align}\lb{5.2}
M_{T_{A,B},\, \cN_i}^{Do}(z)&=P_{\cN_i} (zT_{A,B}+I_{L^2_r((a,b))})(T_{A,B}-zI_{L^2_r((a,b))})^{-1}P_{\cN_i}\big|_{\cN_i}\\
&=zI_{\cN_i}+\big(z^2+1\big)P_{\cN_i}(T_{A,B}-zI_{L^2_r((a,b))})^{-1}P_{\cN_i}\big|_{\cN_i},\quad z\in \bbC\backslash\bbR,  \no
\end{align}
where $P_{\cN_i}$ denotes the orthogonal projection onto $\cN_i$ with $M_{T_{A,B},\, \cN_i}^{Do}(z)\in \cB(\cN_i)$, $z\in \bbC\backslash\bbR$, and
\begin{equation}\lb{5.3}
M_{T_{A,B},\, \cN_i}^{Do}(\pm i) = \pm i I_{\cN_i}.
\end{equation}

Let $\{v_j\}_{j=1,2}$ be an orthonormal basis for the subspace $\cN_i$.  The orthogonal projection onto $\cN_i$ is
\begin{equation}\lb{5.4}
P_{\cN_i} = \sum_{k=1}^2( v_k,\dott)_{L^2_r((a,b))}v_k.
\end{equation}
Therefore, the action of $M_{T_{A,B},\, \cN_i}^{Do}(\dott)$ may be computed directly in terms of \linebreak $\{v_j\}_{j=1,2}$ as follows:
\begin{align}
&M_{T_{A,B},\, \cN_i}^{Do}(z)f 
= \big[zI_{\cN_i} + \big(z^2+1\big)P_{\cN_i}(T_{A,B}-zI_{L^2_r((a,b))})^{-1}P_{\cN_i}\big|_{\cN_i}\big]f  \lb{5.5}   \\
&\quad = zf + \big(z^2+1\big)P_{\cN_i}(T_{A,B}-zI_{L^2_r((a,b))})^{-1}f\no\\
&\quad = \sum_{j=1}^2\big( v_j,\big[zI_{\cN_i} + \big(z^2+1\big) (T_{A,B}-zI_{L^2_r((a,b))})^{-1}\big]f\big)_{L_r^2((a,b))}v_j\no\\
&\quad = \sum_{j,k=1}^2\big(v_j,\big[zI_{\cN_i} + \big(z^2+1\big) (T_{A,B}-zI_{L^2_r((a,b))})^{-1}\big]v_k\big)_{L_r^2((a,b))}(v_k,f)_{L_r^2((a,b))}v_j\no\\
&\quad = \sum_{j,k=1}^2\big[z \delta_{j,k} + \big(z^2+1\big)\big(v_j, (T_{A,B}-zI_{L^2_r((a,b))})^{-1}v_k\big)_{L_r^2((a,b))}\big](v_k,f)_{L_r^2((a,b))}v_j,\no\\
&\hspace*{9.3cm}f\in \cN_i,\, z\in \bbC\backslash\bbR,\no
\end{align}
where one uses $f=\sum_{j=1}^2(v_j,f)_{L^2_r((a,b))}v_j$ to obtain the fourth equality in \eqref{5.5}.  Hence,
\begin{align}
&M_{T_{A,B},\, \cN_i}^{Do}(z)\lb{5.6}\\
&\ = \sum_{j,k=1}^2\big[z \delta_{j,k} + \big(z^2+1\big)\big(v_j, (T_{A,B}-zI_{L^2_r((a,b))})^{-1}v_k\big)_{L_r^2((a,b))}\big](v_k,\dott)_{L_r^2((a,b))}v_j\big|_{\cN_i},\no\\
&\hspace*{11.2cm}z\in \bbC\backslash\bbR.\no
\end{align}
In order to determine $M_{T_{A,B},\, \cN_i}^{Do}(\dott)$ in terms of the orthonormal basis $\{v_j\}_{j=1,2}$, one must compute the fixed inner products in \eqref{5.6}.  That is, one must compute
\begin{equation}\lb{5.7}
\big(v_j, (T_{A,B}-zI_{L^2_r((a,b))})^{-1}v_k\big)_{L_r^2((a,b))},\quad j,k\in\{1,2\},\, z\in \bbC\backslash\bbR.
\end{equation}
In light of \eqref{5.3}, it suffices to compute \eqref{5.7} under the additional assumption that $z\neq \pm i$.  We will first do this for the Dirichlet-type extension $T_{0,0}$ (cf.~\eqref{4.51}).

\subsection{The Donoghue \textit{m}-function \textit{M}$\mathstrut_{T_{0,0},\, \cN_i}^{Do}(\dott)$ for \textit{T}$\mathstrut_{0,0}$}

Here we shall consider the Dirichlet-type self-adjoint extension $T_{0,0}$ of $T_{min}$.  Assuming Hypothesis \ref{h4.1} and taking the orthonormal basis for $\cN_i$ obtained by applying the Gram--Schmidt process to $\{u_j(i,\dott)\}_{j=1,2}$, we shall compute the inner products \eqref{5.7} and use \eqref{5.6} to obtain an explicit expression for the Donoghue $m$-function $M_{T_{0,0},\, \cN_i}^{Do}(\dott)$ for $T_{0,0}$.

In the analysis below, it will be convenient to also introduce an orthonormal basis for $\cN_{-i}$.  To set the stage for applying Gram--Schmidt to $\{u_j(\pm i,\dott)\}_{j=1,2}$, one applies \eqref{2.1b}  and \eqref{4.52}, to compute
\begin{align}
&(u_j(\pm i,\dott),u_k(\pm i,\dott))_{L^2_r((a,b))}  
= \int_a^b r(x)dx\, \overline{u_j(\pm i,x)}u_k(\pm i, x)\no\\
&\quad = \int_a^b r(x)dx\, u_j(\mp i,x)u_k(\pm i,x)   
= \frac{W\big(u_j(\mp i,\dott),u_k(\pm i,\dott)\big)\big|_a^b}{\mp i - (\pm i)}\no\\
&\quad = \mp \frac{1}{2i}W\big(u_j(\mp i,\dott),u_k(\pm i,\dott)\big)\big|_a^b\no\\
&\quad = \mp \frac{1}{2i}\big\{ \wti u_j(\mp i,b)\wti u_k^{\, \prime}(\pm i,b) - \wti u_j^{\, \prime}(\mp i,b)\wti u_k(\pm i,b)\no\\
&\hspace*{1.6cm} -\big[\wti u_j(\mp i,a)\wti u_k^{\, \prime}(\pm i,a) - \wti u_j^{\, \prime}(\mp i,a)\wti u_k(\mp i,a)\big]\big\}\no\\
&\quad = \mp \frac{1}{2i}\big\{ \wti u_k^{\, \prime}(\pm i,b)\delta_{j,1} - \wti u_j^{\, \prime}(\mp i,b)\delta_{k,1}\no\\
&\hspace*{1.6cm} -\big[\wti u_k^{\, \prime}(\pm i,a)\delta_{j,2} - \wti u_j^{\, \prime}(\mp i,a)\delta_{k,2}\big]\big\},\quad j,k\in\{1,2\}.     \lb{5.8}
\end{align}
In particular, \eqref{5.8} implies
\begin{align}
& (u_1(\pm i,\dott),u_2(\pm i,\dott))_{L^2_r((a,b))} = \mp \frac{1}{2i}\big[\wti u_2^{\, \prime}(\pm i,b) + \wti u_1^{\, \prime}(\mp i,a)\big]   \no \\
& \quad =\mp \frac{1}{2i}\big[\wti u_2^{\, \prime}(\pm i,b) - \wti u_2^{\, \prime}(\mp i,b)\big] 
=\mp \frac{1}{2i}\big[\wti u_2^{\, \prime}(\pm i,b) - \overline{\wti u_2^{\, \prime}(\pm i,b)}\big]\no\\
&\quad =\mp \Im\big(\wti u_2^{\, \prime}(\pm i,b)\big) 
= (u_2(\pm i,\dott),u_1(\pm i,\dott))_{L^2_r((a,b))},     \lb{5.9}
\end{align}
and
\begin{align}
& \big\|u_1(\pm i,\dott)\big\|_{L^2_r((a,b))}^2 = \mp \frac{1}{2i}\big[\wti u_1^{\, \prime}(\pm i,b)-\wti u_1^{\, \prime}(\mp i,b)\big]     \no \\
&\quad = \mp \frac{1}{2i}\big[\wti u_1^{\, \prime}(\pm i,b) - \overline{\wti u_1^{\, \prime}(\pm i,b)}\big] 
= \mp \Im\big(\wti u_1^{\, \prime}(\pm i,b) \big),     \lb{5.10} \\
& \big\|u_2(\pm i,\dott)\big\|_{L^2_r((a,b))}^2 = \pm \frac{1}{2i}\big[\wti u_2^{\, \prime}(\pm i,a)-\wti u_2^{\, \prime}(\mp i,a)\big]     \no \\
& \quad = \pm \frac{1}{2i}\big[\wti u_2^{\, \prime}(\pm i,a) - \overline{\wti u_2^{\, \prime}(\pm i,a)}\big] 
= \pm \Im\big(\wti u_2^{\, \prime}(\pm i,a) \big).     \lb{5.10a}
\end{align}
Applying the Gram--Schmidt process to $\{u_j(\pm i,\dott)\}_{j=1,2}$ then yields an orthonormal basis $\{v_j(\pm i,\dott)\}_{j=1,2}$ for $\cN_{\pm i}$ as follows:
\begin{align}
v_1(\pm i,\dott)& =c_1(\pm i)u_1(\pm i,\dott),\lb{5.11}\\
v_2(\pm i,\dott)& = c_2(\pm i)\bigg[u_2(\pm i,\dott)-\frac{(u_1(\pm i,\dott),u_2(\pm i,\dott))_{L^2_r((a,b))}}{\|u_1(\pm i,\dott)\|_{L^2_r((a,b))}^{2}}u_1(\pm i,\dott)\bigg]\lb{5.12}\\
&\;= c_2(\pm i)\bigg[u_2(\pm i,\dott)-\frac{\Im\big(\wti u_2^{\, \prime}(i,b)\big)}{\Im\big(\wti u_1^{\, \prime}(i,b) \big)}u_1(\pm i,\dott)\bigg],\no
\end{align}
where
\begin{align}
c_1(\pm i)& = \|u_1(\pm i,\dott)\|_{L^2_r((a,b))}^{-1}=\big[\mp \Im\big(\wti u_1^{\, \prime}(\pm i,b) \big)\big]^{-1/2},\lb{5.13}\\
c_2(\pm i)& = \bigg\|u_2(\pm i,\dott)-\frac{\Im\big(\wti u_2^{\, \prime}(i,b)\big)}{\Im\big(\wti u_1^{\, \prime}(i,b) \big)}u_1(\pm i,\dott)\bigg\|_{L^2_r((a,b))}^{-1}\lb{5.14}\\
&\;= \bigg[\pm \Im\big(\wti u_2^{\, \prime}(\pm i,a) \big)\pm\frac{\big[\Im\big(\wti u_2^{\, \prime}(\pm i,b)\big)\big]^2}{\Im\big(\wti u_1^{\, \prime}(\pm i,b) \big)}  \bigg]^{-1/2}, \no
\end{align}
and the equality $\Im\big(\wti u_2^{\, \prime}(-i,b)\big)\big/\Im\big(\wti u_1^{\, \prime}(-i,b) \big) = \Im\big(\wti u_2^{\, \prime}(i,b)\big)\big/\Im\big(\wti u_1^{\, \prime}(i,b) \big)$ has been applied.  Based on \eqref{4.5}, one infers that
\begin{equation}\lb{5.15}
c_j(i)=c_j(-i),\quad j\in \{1,2\}.
\end{equation}
In addition, by taking conjugates throughout \eqref{5.11}--\eqref{5.14} and applying \eqref{4.5}, one obtains
\begin{equation}\lb{5.16}
\overline{v_j(\pm i,\dott)} = v_j(\mp i,\dott),\quad j\in \{1,2\}.
\end{equation}
Taking the orthonormal basis $\{v_j(i,\dott)\}_{j=1,2}$ for $\cN_i$ in \eqref{5.6} then yields the following expression for the Donoghue $m$-function $M_{T_{0,0},\, \cN_i}^{Do}(\dott)$ for $T_{0,0}$:
\begin{align}
&M_{T_{0,0},\, \cN_i}^{Do}(z)\lb{5.17}\\
&\quad = \sum_{j,k=1}^2\bigg[z \delta_{j,k} + \big(z^2+1\big)\big(v_j(i,\dott), (T_{0,0}-zI_{L_r^2((a,b))})^{-1}v_k(i,\dott)\big)_{L_r^2((a,b))}\big]\no\\
&\hspace*{2cm} \times (v_k(i,\dott),\dott)_{L_r^2((a,b))}v_j(i,\dott)\big|_{\cN_i},\quad z\in \bbC\backslash\bbR.\no
\end{align}
In the special cases $z=\pm i$, one obtains (cf.~\eqref{5.3})
\begin{equation}\lb{5.18}
M_{T_{0,0},\, \cN_i}^{Do}(\pm i) = \pm i I_{\cN_i}.
\end{equation}
Thus, to obtain an explicit representation for $M_{T_{0,0},\, \cN_i}^{Do}(\dott)$, it remains to evaluate the inner products
\begin{equation}\lb{5.19}
\begin{split}
\big(v_j(i,\dott), (T_{0,0}-zI_{L_r^2((a,b))})^{-1}v_k(i,\dott)\big)_{L_r^2((a,b))},\quad j,k\in\{1,2\},&\\
z\in \bbC\backslash\bbR,\, z\neq \pm i.&
\end{split}
\end{equation}
For the purposes of evaluating the inner products \eqref{5.19}, we introduce the generalized Cayley transform of $T_{0,0}$,
\begin{align}
U_{0,0,z,z'} &= (T_{0,0}-z'I_{L^2_r((a,b))})(T_{0,0}-zI_{L^2_r((a,b))})^{-1}\lb{5.20}\\
&= I_{L^2_r((a,b))} + (z-z')(T_{0,0}-zI_{L^2_r((a,b))})^{-1},\quad z,z'\in \rho(T_{0,0}),\no
\end{align}
which forms a bijection from $\cN_{z'}$ to $\cN_z$.  One verifies that
\begin{equation}\lb{5.21}
U_{0,0,z,z'}u_j(z',\dott) = u_j(z,\dott),\quad j\in \{1,2\},\, z,z'\in \rho(T_{0,0}).
\end{equation}
In fact, for fixed $z,z'\in \rho(T_{0,0})$, one uses the fact that $U_{0,0,z,z'}$ maps into $\cN_z$ to write
\begin{equation}\lb{5.22}
U_{0,0,z,z'}u_j(z',\dott) = \alpha_{j,1}u_1(z,\dott) + \alpha_{j,2}u_2(z,\dott),\quad j\in \{1,2\},
\end{equation}
for some scalars $\alpha_{j,k}\in \bbC$, $j,k\in \{1,2\}$.  The second equality in \eqref{5.20} then implies
\begin{equation}\lb{5.23}
\begin{split}
U_{0,0,z,z'}u_j(z',\dott) = u_j(z',\dott) + (z-z')(T_{0,0}-zI_{L_r^2((a,b))})^{-1}u_j(z',\dott),&\\
j\in \{1,2\},&
\end{split}
\end{equation}
so that
\begin{equation}\lb{5.24}
[U_{0,0,z,z'}u_j(z',\dott)]\; \wti{}\, (x) = \wti u_j(z',x),\quad x\in \{a,b\},\, j\in \{1,2\}.
\end{equation}
Evaluating \eqref{5.22} and \eqref{5.24} at $a$ yields $\alpha_{1,2}=0$ and $\alpha_{2,2}=1$.  Similarly, evaluating \eqref{5.22} and \eqref{5.24} at $b$ yields $\alpha_{1,1}=1$ and $\alpha_{2,1}=0$.  Hence, \eqref{5.21} follows.

We will now calculate the inner products \eqref{5.19}.  Let
\begin{equation}\lb{5.25}
\text{$z\in \bbC\backslash \bbR$ be fixed with $z\neq \pm i$}.
\end{equation}
The system $\{v_j(z,\dott)\}_{j=1,2}$ defined by 
\begin{equation}\lb{5.26}
v_j(z,\dott) = U_{0,0,z,i}v_j(i,\dott),\quad j\in\{1,2\},
\end{equation}
is a basis for the subspace $\cN_z$.  Applying \eqref{5.11}--\eqref{5.12} and \eqref{5.21} in \eqref{5.26}, one obtains
\begin{equation}\lb{5.27}
\begin{split}
v_1(z,\dott)&=c_1(i)u_1(z,\dott),\\
v_2(z,\dott)&= c_2(i)\Bigg[u_2(z,\dott)-\frac{\Im\big(\wti u_2^{\, \prime}( i,b)\big)}{\Im\big(\wti u_1^{\, \prime}( i,b) \big)}u_1(z,\dott)\Bigg].
\end{split}
\end{equation}
The inner products \eqref{5.19} can be recast in terms of $\{v_j(z,\dott)\}_{j=1,2}$ as follows:
\begin{align}
&\big(v_j(i,\dott),(T_{0,0}-zI_{L^2_r((a,b))})^{-1}v_k(i,\dott) \big)_{L^2_r((a,b))}\no\\
&\quad =\frac{1}{z-i} (v_j(i,\dott),[U_{0,0,z,i}-I_{L^2_r((a,b))}]v_k(i,\dott))_{L^2_r((a,b))}\lb{5.28}\\
&\quad = \frac{1}{i-z}\delta_{j,k} + \frac{1}{z-i}(v_j(i,\dott),v_k(z,\dott))_{L^2_r((a,b))},\quad j,k\in\{1,2\}.\no
\end{align}
In turn, by \eqref{2.1b}  and \eqref{5.16}, one obtains
\begin{align}
(v_j(i,\dott),v_k(z,\dott))_{L^2_r((a,b))}&= \int_a^b r(x)dx\, v_j(-i,x)v_k(z,x)\no\\
&=-\frac{W\big(v_j(-i,\dott),v_k(z,\dott)\big)\big|_a^b}{z+i},\quad j,k\in \{1,2\}.\lb{5.29}
\end{align}
Using \eqref{5.29}, one recasts \eqref{5.28} as
\begin{equation}\lb{5.30}
\begin{split}
&\big(v_j(i,\dott),(T_{0,0}-zI_{L^2_r((a,b))})^{-1}v_k(i,\dott) \big)_{L^2_r((a,b))}\\
&\quad = \frac{1}{i-z}\delta_{j,k} - \frac{W\big(v_j(-i,\dott),v_k(z,\dott)\big)\big|_a^b}{1+z^2},\quad j,k\in\{1,2\}.
\end{split}
\end{equation}
After substituting \eqref{5.30} in \eqref{5.17} and taking cancellations into account, one obtains
\begin{align}
&M_{T_{0,0},\, \cN_i}^{Do}(z)\lb{5.31}\\
&\quad = \sum_{j,k=1}^2 \big[-i \delta_{j,k} - W\big(v_j(-i,\dott),v_k(z,\dott)\big)\big|_a^b\big](v_k(i,\dott),\dott)_{L_r^2((a,b))}
v_j(i,\dott)\big|_{\cN_i},\no\\
&\hspace*{9cm} z\in \bbC\backslash\bbR,\, z\neq \pm i.\no
\end{align}
The Wronskians
\begin{equation}\lb{5.32}
W_{j,k}(z):=W(v_j(-i,\dott),v_k(z,\dott))\big|_a^b,\quad z\in \bbC\backslash\bbR,\, z\neq \pm i,
\end{equation}
that appear in \eqref{5.31} can be computed by applying \eqref{4.52} and \eqref{5.27}.  One obtains for $z\in \bbC\backslash\bbR$, $z\neq \pm i$:
\begin{align}
&W_{1,1}(z)= [c_1(i)]^2\big[\wti u_1^{\, \prime}(z,b)-\wti u_1^{\, \prime}(-i,b)\big],\lb{5.33}\\
&W_{1,2}(z)= c_1(i)c_2(i)\Bigg\{\frac{\Im\big(\wti u_2^{\, \prime}( i,b)\big)}{\Im\big(\wti u_1^{\, \prime}( i,b) \big)}\big[\wti u_1^{\, \prime}(-i,b)-\wti u_1^{\, \prime}(z,b)\big]\lb{5.34}\\
&\hspace*{3.3cm}+\wti u_2^{\, \prime}(z,b)+\wti u_1^{\, \prime}(-i,a)\Bigg\}, \no\\
&W_{2,1}(z)= -c_1(i)c_2(i)\Bigg\{\frac{\Im\big(\wti u_2^{\, \prime}( i,b)\big)}{\Im\big(\wti u_1^{\, \prime}(i,b) \big)}\big[\wti u_1^{\, \prime}(z,b) - \wti u_1^{\, \prime}(-i,b)\big]\lb{5.35}\\
&\hspace*{3.5cm} + \wti u_2^{\, \prime}(-i,b)+\wti u_1^{\, \prime}(z,a)\Bigg\},\no\\
&W_{2,2}(z) = [c_2(i)]^2\Bigg\{\Bigg[\wti u_2^{\, \prime}(-i,b)-\wti u_2^{\, \prime}(z,b)\lb{5.36}\\
&\hspace*{3.1cm}+\frac{\Im\big(\wti u_2^{\, \prime}(i,b)\big)}{\Im\big(\wti u_1^{\, \prime}(i,b) \big)}\big[\wti u_1^{\, \prime}(z,b)-\wti u_1^{\, \prime}(-i,b)\big]\Bigg]\frac{\Im\big(\wti u_2^{\, \prime}(i,b)\big)}{\Im\big(\wti u_1^{\, \prime}(i,b) \big)}\no\\
&\hspace*{3cm} +\wti u_2^{\, \prime}(-i,a) - \wti u_2^{\, \prime}(z,a)+\frac{\Im\big(\wti u_2^{\, \prime}(i,b)\big)}{\Im\big(\wti u_1^{\, \prime}(i,b) \big)}\big[\wti u_1^{\, \prime}(z,a)-\wti u_1^{\, \prime}(-i,a)\big]\Bigg\}.\no
\end{align}
The relations \eqref{5.18} and \eqref{5.31}--\eqref{5.36} now yield an explicit representation for the Donoghue $m$-function $M_{T_{0,0},\, \cN_i}^{Do}(\dott)$ for $T_{0,0}$.

\begin{theorem}\lb{t5.1}
Assume Hypothesis \ref{h4.1} and let $\{v_j(i,\dott)\}_{j=1,2}$ be the orthonormal basis for $\cN_i$ defined in \eqref{5.11}--\eqref{5.14}.  The Donoghue $m$-function $M_{T_{0,0},\, \cN_i}^{Do}(\dott):\bbC\backslash\bbR\to \cB(\cN_i)$ for $T_{0,0}$ satisfies 
\begin{align}
M_{T_{0,0},\, \cN_i}^{Do}(\pm i) &= \pm i I_{\cN_i},      \no \\
M_{T_{0,0},\, \cN_i}^{Do}(z) &= -\sum_{j,k=1}^2 [i \delta_{j,k} + W_{j,k}(z)] (v_k(i,\dott),\dott)_{L_r^2((a,b))}v_j(i,\dott)\big|_{\cN_i},   \lb{5.37}   \\
&= -iI_{\cN_i}-\sum_{j,k=1}^2W_{j,k}(z)\big(v_k(i,\dott),\dott\big)_{L_r^2((a,b))}v_j(i,\dott)\big|_{\cN_i},  \no  \\
& \hspace*{5.7cm} z\in \bbC\backslash\bbR,\, z\neq \pm i,\no
\end{align}
where the matrix $\big(W_{j,k}(\dott)\big)_{j,k=1}^2$ is given by \eqref{5.33}--\eqref{5.36}.
\end{theorem}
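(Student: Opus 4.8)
The plan is to treat the two cases $z = \pm i$ and $z \in \bbC\backslash\bbR$ with $z \neq \pm i$ separately, since the former is forced by the general structure of the Donoghue $m$-function while the latter requires the explicit resolvent computation. For $z = \pm i$ one simply invokes \eqref{5.18} (equivalently \eqref{5.3}), which gives $M_{T_{0,0},\, \cN_i}^{Do}(\pm i) = \pm i I_{\cN_i}$ with no further work. For $z \neq \pm i$ I would start from the general representation \eqref{5.17} of $M_{T_{0,0},\, \cN_i}^{Do}(z)$ relative to the Gram--Schmidt basis $\{v_j(i,\dott)\}_{j=1,2}$, so that the entire problem reduces to evaluating the four resolvent matrix elements in \eqref{5.19}.

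The key device for these matrix elements is the generalized Cayley transform $U_{0,0,z,i}$ from \eqref{5.20}. First I would establish \eqref{5.21}, namely that $U_{0,0,z,z'}$ sends $u_j(z',\dott)$ to $u_j(z,\dott)$; this follows because $U_{0,0,z,z'}$ maps $\cN_{z'}$ bijectively onto $\cN_z$, so the image is a linear combination \eqref{5.22} of $u_1(z,\dott)$ and $u_2(z,\dott)$, and the coefficients are pinned down by taking the generalized boundary values at $a$ and $b$ via \eqref{5.24} together with the normalization \eqref{4.52}. Using \eqref{5.21} and the definition \eqref{5.26} one transports the basis to $\cN_z$ as in \eqref{5.27}, rewrites each resolvent inner product through the Cayley transform as in \eqref{5.28}, and then converts the resulting inner product $(v_j(i,\dott),v_k(z,\dott))$ into a Wronskian using the product--integral identity \eqref{2.1b} together with the conjugation property \eqref{5.16}, arriving at \eqref{5.30}. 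Substituting \eqref{5.30} into \eqref{5.17}, the $\delta_{j,k}$ terms collapse, since $(z^2+1)/(i-z) = -(z+i)$ cancels against the leading $z\delta_{j,k}$ to leave $-i\delta_{j,k}$, yielding the compact form \eqref{5.31}.

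The remaining and most laborious step is the explicit evaluation of the four Wronskians $W_{j,k}(z) = W(v_j(-i,\dott),v_k(z,\dott))\big|_a^b$ in \eqref{5.33}--\eqref{5.36}. Here I would expand each endpoint contribution by the bilinear formula \eqref{2.27} (equivalently \eqref{2.16}), insert the expressions \eqref{5.27} for $v_1, v_2$ in terms of $u_1, u_2$, and then apply the boundary normalizations \eqref{4.52} so that only the surviving quasi-derivative boundary values $\wti u_j^{\, \prime}(\dott)$ remain. The entry $W_{2,2}$ is the most delicate because $v_2$ carries the Gram--Schmidt correction proportional to $\Im(\wti u_2^{\, \prime}(i,b))/\Im(\wti u_1^{\, \prime}(i,b))$, so its expansion produces several cross terms that must be tracked carefully; I expect this bookkeeping to be the main obstacle, though it is entirely mechanical once \eqref{4.52} is used systematically.

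Finally, assembling \eqref{5.18} with \eqref{5.31} and the explicit matrix $(W_{j,k}(\dott))_{j,k=1}^2$ from \eqref{5.33}--\eqref{5.36} gives the first displayed form in \eqref{5.37}. The second form follows immediately by isolating the diagonal part, since $-\sum_{j,k=1}^2 i\delta_{j,k}(v_k(i,\dott),\dott)_{L_r^2((a,b))}v_j(i,\dott)\big|_{\cN_i} = -i I_{\cN_i}$, because $\{v_j(i,\dott)\}_{j=1,2}$ is orthonormal and hence $\sum_{j=1}^2 (v_j(i,\dott),\dott)_{L_r^2((a,b))} v_j(i,\dott)\big|_{\cN_i} = I_{\cN_i}$.
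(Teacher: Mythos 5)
Your proposal is correct and follows essentially the same route as the paper: the values at $z=\pm i$ from \eqref{5.3}/\eqref{5.18}, the reduction via \eqref{5.17} to the resolvent matrix elements \eqref{5.19}, the generalized Cayley transform \eqref{5.20}--\eqref{5.21} to transport the basis as in \eqref{5.26}--\eqref{5.27}, the conversion to Wronskians via \eqref{2.1b} and \eqref{5.16} leading to \eqref{5.30}--\eqref{5.31}, and the explicit evaluation of $W_{j,k}(z)$ from \eqref{4.52} and \eqref{5.27}. The cancellation $z\delta_{j,k}+(z^2+1)(i-z)^{-1}\delta_{j,k}=-i\delta_{j,k}$ and the identification of the diagonal sum with $-iI_{\cN_i}$ by orthonormality are exactly as in the paper's derivation preceding the theorem.
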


\subsection{The Donoghue \textit{m}-function for Self-Adjoint Extensions Other Than \textit{T}$\mathstrut_{0,0}$}

The Donoghue $m$-function $M_{T_{0,0},\, \cN_i}^{Do}(\dott)$ for $T_{0,0}$ was computed explicitly in Theorem \ref{t5.1}.  If $T_{A,B}$ is any other self-adjoint extension of $T_{min}$, then the resolvent identities in Theorem \ref{t4.3} may be used to obtain an explicit representation of the Donoghue $m$-function for $T_{A,B}$.

We begin with the case when either $T_{A,B}=T_{\alpha,\beta}$ for $\alpha,\beta\in (0,\pi)$ or $T_{A,B}=T_{\varphi,R}$ for some $\varphi\in [0,2\pi)$, $R\in \SL(2,\bbR)$, with $R_{1,2}\neq 0$.  In this case, items $(i)$ and $(iv)$ in Theorem \ref{t4.3} imply
\begin{align}
(T_{A,B}-zI_{L_r^2((a,b))})^{-1} &= (T_{0,0}-zI_{L_r^2((a,b))})^{-1}\lb{5.38}\\
&\quad + \sum_{j,k=1}^2 \big[K_{A,B}(z)^{-1}\big]_{j,k} ( u_j(\ol{z},\dott),\,\cdot\,)_{L^2_r((a,b))} u_k(z,\dott),\no\\
&\hspace*{4.9cm} z\in \rho(T_{0,0})\cap\rho(T_{A,B}),\no
\end{align}
where $K_{A,B}(\dott)=K_{\alpha,\beta}(\dott)$ or $K_{A,B}(\dott)=K_{\varphi,R}(\dott)$ (cf.~\eqref{4.53} and \eqref{4.61}) according to whether $T_{A,B}=T_{\alpha,\beta}$ or $T_{A,B}=T_{\varphi,R}$, respectively.  Employing \eqref{5.38} in \eqref{5.2}, one obtains the following representation for the Donoghue $m$-function $M_{T_{A,B},\, \cN_i}^{Do}(\dott)$ of $T_{A,B}$:
\begin{align}
&M_{T_{A,B},\, \cN_i}^{Do}(z)\lb{5.39}\\
&\  = zI_{\cN_i} + \big(z^2+1\big)P_{\cN_i}(T_{0,0}-zI_{L_r^2((a,b))})^{-1}P_{\cN_i}\big|_{\cN_i}\no\\
&\qquad +\big(z^2+1\big)\bigg[\sum_{j,k=1}^2 \big[K_{A,B}(z)^{-1}\big]_{j,k} ( u_j(\ol{z},\dott),\,\cdot\,)_{L^2_r((a,b))} P_{\cN_i} u_k(z,\dott)\bigg]\bigg|_{\cN_i}\no\\
&\ = M_{T_{0,0},\, \cN_i}^{Do}(z) +\big(z^2+1\big)\sum_{j,k=1}^2 \big[K_{A,B}(z)^{-1}\big]_{j,k} ( u_j(\ol{z},\dott),\,\cdot\,)_{L^2_r((a,b))} P_{\cN_i} u_k(z,\dott)\big|_{\cN_i},\no\\
&\hspace*{10.9cm} z\in \bbC\backslash\bbR.\no
\end{align}
In light of \eqref{5.3}, to obtain a final expression for $M_{T_{A,B},\, \cN_i}^{Do}(\dott)$, one must compute $P_{\cN_i}u_k(z,\dott)$, $k\in \{1,2\}$, for $z\in \bbC\backslash\bbR$, $z\neq \pm i$.  Let $z\in \bbC\backslash\bbR$, $z\neq \pm i$.  Invoking the orthonormal basis $\{v_j(i,\dott)\}_{j=1,2}$ for $\cN_i$ defined in \eqref{5.11}--\eqref{5.14}, one obtains
\begin{equation}\lb{5.40}
P_{\cN_i}u_k(z,\dott) = \sum_{\ell=1}^2 (v_\ell(i,\dott),u_k(z,\dott))_{L_r^2((a,b))}v_{\ell}(i,\dott),\quad k\in \{1,2\}.
\end{equation}
By \eqref{2.1b} ,
\begin{align}
\big(v_\ell(i,\dott),u_k(z,\dott)\big)_{L_r^2((a,b))}&=\int_a^br(x)dx\, v_{\ell}(-i,x)u_k(z,x)\lb{5.41}\\
&= -\frac{W(v_{\ell}(-i,\dott),u_k(z,\dott))|_a^b}{z+i},\quad \ell,k\in \{1,2\}.\no
\end{align}
The Wronskians
\begin{equation}\lb{5.42}
W_{\ell,k}^{Kr}(z):=W(v_{\ell}(-i,\dott),u_k(z,\dott))|_a^b,\quad \ell,k\in \{1,2\},
\end{equation}
that appear in \eqref{5.41} can be computed by applying \eqref{4.52} and \eqref{5.11}--\eqref{5.12}.  One obtains:
\begin{align}
W_{1,1}^{Kr}(z)&=c_1(i)\big[\wti u_1^{\, \prime}(z,b) - \wti u_1^{\, \prime}(-i,b) \big],\lb{5.43}\\
W_{1,2}^{Kr}(z)&=c_1(i)\big[\wti u_2^{\, \prime}(z,b) + \wti u_1^{\, \prime}(-i,a) \big],\lb{5.44}\\
W_{2,1}^{Kr}(z)&=\wti v_2(-i,b)\wti u_1^{\, \prime}(z,b) - \wti v_2^{\, \prime}(-i,b) - \wti v_2(-i,a)\wti u_1^{\,\prime}(z,a)\lb{5.45}\\
&=-c_2(i)\bigg\{\frac{\Im\big(\wti u_2^{\, \prime}(i,b)\big)}{\Im\big(\wti u_1^{\, \prime}(i,b) \big)}\big[\wti u_1^{\, \prime}(z,b) - \wti u_1^{\, \prime}(-i,b)\big]+\wti u_2^{\, \prime}(-i,b)+\wti u_1^{\,\prime}(z,a)\bigg\},\no\\
W_{2,2}^{Kr}(z)&=\wti v_2(-i,b)\wti u_2^{\, \prime}(z,b) - \wti v_2(-i,a)\wti u_2^{\, \prime}(z,a) + \wti v_2^{\, \prime}(-i,a)\lb{5.46}\\
&=-c_2(i)\bigg\{\frac{\Im\big(\wti u_2^{\, \prime}(i,b)\big)}{\Im\big(\wti u_1^{\, \prime}(i,b) \big)}\big[\wti u_2^{\, \prime}(z,b) + \wti u_1^{\, \prime}(-i,a)\big] + \wti u_2^{\, \prime}(z,a) - \wti u_2^{\, \prime}(-i,a)\bigg\}.\no
\end{align}
Therefore, \eqref{5.40} may be recast as
\begin{equation}\lb{5.47}
P_{\cN_i}u_k(z,\dott) = -\frac{1}{z+i}\sum_{\ell=1}^2 W_{\ell,k}^{Kr}(z)v_{\ell}(i,\dott),\quad k\in \{1,2\}.
\end{equation}
By combining \eqref{5.39} and \eqref{5.47}, one obtains
\begin{align}
M_{T_{A,B},\, \cN_i}^{Do}(z)&= M_{T_{0,0},\, \cN_i}^{Do}(z)\\
&\ + (i-z)\sum_{j,k,\ell=1}^2 \big[K_{A,B}(z)^{-1}\big]_{j,k}W_{\ell,k}^{Kr}(z) ( u_j(\ol{z},\dott),\,\cdot\,)_{L^2_r((a,b))} v_{\ell}(i,\dott)\big|_{\cN_i}.\no
\end{align}
These considerations are summarized next.

\begin{theorem}\lb{t5.2}
Assume Hypothesis \ref{h4.1} and let $\{v_j(i,\dott)\}_{j=1,2}$ be the orthonormal basis for $\cN_i$ defined in \eqref{5.11}--\eqref{5.14}.  The following items $(i)$ and $(ii)$ hold.\\[2mm]
$(i)$ If $\alpha,\beta\in (0,\pi)$, then the Donoghue $m$-function $M_{T_{\a,\b},\, \cN_i}^{Do}(\dott):\bbC\backslash\bbR\to \cB(\cN_i)$ for $T_{\alpha,\beta}$ satisfies 
\begin{align}
M_{T_{\a,\b},\, \cN_i}^{Do}(\pm i) &= \pm i I_{\cN_i},    \no \\ 
M_{T_{\a,\b},\, \cN_i}^{Do}(z)&= M_{T_{0,0},\, \cN_i}^{Do}(z)\\
&+ (i-z)\sum_{j,k,\ell=1}^2 \big[K_{\alpha,\beta}(z)^{-1}\big]_{j,k}W_{\ell,k}^{Kr}(z) ( u_j(\ol{z},\dott),\,\cdot\,)_{L^2_r((a,b))} v_{\ell}(i,\dott)\big|_{\cN_i},\no\\
&\hspace*{7.9cm} z\in\bbC\backslash\bbR,\, z\neq\pm i,\no
\end{align}
where the matrices $K_{\alpha,\beta}(\dott)$ and $\big(W_{\ell,k}^{Kr}(\dott)\big)_{\ell,k=1}^2$ are given by \eqref{4.53} and \eqref{5.43}--\eqref{5.46}, respectively.\\[2mm]
$(ii)$ If $\varphi\in[0,2\pi)$ and $R\in \SL(2,\bbR)$ with $R_{1,2}\neq 0$, then the Donoghue $m$-function $M_{T_{\varphi,R},\, \cN_i}^{Do}(\dott):\bbC\backslash\bbR\to \cB(\cN_i)$ for $T_{\varphi,R}$ satisfies
\begin{align}
M_{T_{\varphi,R},\, \cN_i}^{Do}(\pm i) &= \pm i I_{\cN_i},    \no \\ 
M_{T_{\varphi, R},\, \cN_i}^{Do}(z)&= M_{T_{0,0},\, \cN_i}^{Do}(z)\\
&\hspace{-.1cm} + (i-z)\sum_{j,k,\ell=1}^2 \big[K_{\varphi,R}(z)^{-1}\big]_{j,k}W_{\ell,k}^{Kr}(z) ( u_j(\ol{z},\dott),\,\cdot\,)_{L^2_r((a,b))} v_{\ell}(i,\dott)\big|_{\cN_i},\no\\
&\hspace*{7.9cm} z\in\bbC\backslash\bbR,\, z\neq\pm i,\no
\end{align}
where the matrices $K_{\varphi,R}(\dott)$ and $\big(W_{\ell,k}^{Kr}(\dott)\big)_{\ell,k=1}^2$ are given by \eqref{4.61} and \eqref{5.43}--\eqref{5.46}, respectively.
\end{theorem}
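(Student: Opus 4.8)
The plan is to assemble the theorem from the abstract Donoghue formula \eqref{5.2}, the Krein resolvent identities of Theorem \ref{t4.3}, and the explicit expression for $M_{T_{0,0},\, \cN_i}^{Do}(\dott)$ already recorded in Theorem \ref{t5.1}. Both items $(i)$ and $(ii)$ can be treated uniformly under the common notation $T_{A,B}$: item $(i)$ corresponds to $T_{A,B}=T_{\alpha,\beta}$ with $\alpha,\beta\in(0,\pi)$ and $K_{A,B}=K_{\alpha,\beta}$ from \eqref{4.53}, while item $(ii)$ corresponds to $T_{A,B}=T_{\varphi,R}$ with $R_{1,2}\neq 0$ and $K_{A,B}=K_{\varphi,R}$ from \eqref{4.61}; in either case the hypotheses guarantee, via Theorem \ref{t4.3}$(i)$ and $(iv)$, that $K_{A,B}(z)$ is invertible and that the rank-two resolvent identity \eqref{5.38} holds. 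The normalization $M_{T_{A,B},\, \cN_i}^{Do}(\pm i)=\pm iI_{\cN_i}$ is immediate from \eqref{5.3}, so the substance lies in the formula for $z\in\bbC\backslash\bbR$, $z\neq\pm i$.

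First I would substitute \eqref{5.38} into \eqref{5.2}. The unperturbed term reproduces $M_{T_{0,0},\, \cN_i}^{Do}(z)$ exactly, and the rank-two correction yields the double sum in \eqref{5.39}, leaving $P_{\cN_i}u_k(z,\dott)$, $k\in\{1,2\}$, as the only object still to be computed. Expanding $P_{\cN_i}u_k(z,\dott)$ in the orthonormal basis $\{v_\ell(i,\dott)\}_{\ell=1,2}$ via \eqref{5.40} reduces the task to the inner products $(v_\ell(i,\dott),u_k(z,\dott))_{L^2_r((a,b))}$, which by the weighted-integral identity \eqref{2.1b} together with the conjugation property \eqref{5.16} become the boundary Wronskians $W_{\ell,k}^{Kr}(z)=W(v_\ell(-i,\dott),u_k(z,\dott))\big|_a^b$ of \eqref{5.42}, divided by $-(z+i)$.

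The computational heart is the explicit evaluation of these four Wronskians. For $\ell=1$ this is short, since $v_1(i,\dott)$ is a scalar multiple of $u_1(i,\dott)$ by \eqref{5.11}, so the normalization \eqref{4.52} collapses the boundary terms to \eqref{5.43}, \eqref{5.44}. For $\ell=2$ the function $v_2(i,\dott)$ is the Gram--Schmidt combination \eqref{5.12}, and I would first write $W_{2,k}^{Kr}(z)$ in terms of the generalized boundary values $\wti v_2(-i,\dott)$ and $\wti v_2^{\, \prime}(-i,\dott)$ at $a$ and $b$, then substitute the coefficients from \eqref{5.11}--\eqref{5.14} to obtain \eqref{5.45}, \eqref{5.46}. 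This bookkeeping---tracking which Kronecker terms survive at each endpoint under \eqref{4.52}, and correctly propagating the ratio $\Im(\wti u_2^{\, \prime}(i,b))/\Im(\wti u_1^{\, \prime}(i,b))$---is the main obstacle, being purely careful algebra with no conceptual difficulty. Having obtained \eqref{5.47}, I would insert it into \eqref{5.39}; the factor $(z^2+1)$ combines with $-1/(z+i)$ to produce the prefactor $(i-z)$, yielding exactly the stated triple sums with $K_{A,B}(\dott)$ specialized to $K_{\alpha,\beta}(\dott)$ or $K_{\varphi,R}(\dott)$, and completing both items.
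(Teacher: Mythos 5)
Your proposal is correct and follows essentially the same route as the paper: substitute the Krein resolvent identity \eqref{5.38} into \eqref{5.2}, expand $P_{\cN_i}u_k(z,\dott)$ in the orthonormal basis $\{v_\ell(i,\dott)\}_{\ell=1,2}$, convert the resulting inner products into the boundary Wronskians $W_{\ell,k}^{Kr}(z)$ via \eqref{2.1b} and \eqref{5.16}, and combine $(z^2+1)$ with $-1/(z+i)$ to obtain the prefactor $(i-z)$. The paper carries out exactly this computation in \eqref{5.38}--\eqref{5.47}, so no further comment is needed.
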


It remains to compute the Donoghue $m$-functions for $T_{0,\beta}$ and $T_{\alpha,0}$ with $\alpha,\beta\in (0,\pi)$ and $T_{\varphi,R}$ for $\varphi\in[0,2\pi)$ and $R\in\SL(2,\bbR)$ with $R_{1,2}=0$.

\begin{theorem}
Assume Hypothesis \ref{h4.1} and let $\{v_j(i,\dott)\}_{j=1,2}$ be the orthonormal basis for $\cN_i$ defined in \eqref{5.11}--\eqref{5.14}.  The following items $(i)$ and $(ii)$ hold.\\[2mm]
$(i)$ If $\alpha\in (0,\pi)$, then the Donoghue $m$-function $M_{T_{\a,0},\, \cN_i}^{Do}(\dott):\bbC\backslash\bbR\to \cB(\cN_i)$ for $T_{\alpha,0}$ satisfies 
\begin{align}
M_{T_{\a,0},\, \cN_i}^{Do}(\pm i) &= \pm i I_{\cN_i},    \no \\
M_{T_{\a,0},\, \cN_i}^{Do}(z) &= M_{T_{0,0},\, \cN_i}^{Do}(z)\lb{5.53}\\
&\quad + \frac{z-i}{\cot(\alpha)+\wti u_2^{\, \prime}(z,a)} ( u_2(\ol{z},\dott),\,\cdot\,)_{L^2_r((a,b))} \sum_{\ell=1}^2 W_{\ell,2}^{Kr}(z)v_{\ell}(i,\dott)\big|_{\cN_i},\no\\
&\hspace*{7.28cm} z\in\bbC\backslash\bbR,\, z\neq\pm i,\no
\end{align}
where the scalars $\big\{W_{\ell,2}^{Kr}(\dott)\big\}_{\ell=1,2}$ are given by \eqref{5.44} and \eqref{5.46}.\\[2mm]
$(ii)$ If $\beta\in (0,\pi)$, then the Donoghue $m$-function $M_{T_{0,\b},\, \cN_i}^{Do}(\dott):\bbC\backslash\bbR\to \cB(\cN_i)$ for $T_{0,\beta}$ satisfies 
\begin{align}
M_{T_{0,\b},\, \cN_i}^{Do}(\pm i) &= \pm i I_{\cN_i},     \no \\
M_{T_{0,\b},\, \cN_i}^{Do}(z) &= M_{T_{0,0},\, \cN_i}^{Do}(z)\lb{5.56}\\
&\quad - \frac{z-i}{\cot(\beta)+\wti u_1^{\, \prime}(z,b)} ( u_1(\ol{z},\dott),\,\cdot\,)_{L^2_r((a,b))} \sum_{\ell=1}^2 W_{\ell,1}^{Kr}(z)v_{\ell}(i,\dott)\big|_{\cN_i},\no\\
&\hspace*{7.28cm} z\in\bbC\backslash\bbR,\, z\neq\pm i,\no
\end{align}
where the scalars $\big\{W_{\ell,1}^{Kr}(\dott)\big\}_{\ell=1,2}$ are given by \eqref{5.43} and \eqref{5.45}.\\[2mm]
$(iii)$  If $\varphi \in [0,2\pi)$ and $R\in \SL(2,\bbR)$ with $R_{1,2}=0$, then the Donoghue $m$-function $M_{T_{\varphi,R},\, \cN_i}^{Do}(\dott):\bbC\backslash\bbR\to \cB(\cN_i)$ for $T_{\varphi,R}$ satisfies 
\begin{align}
& M_{T_{\varphi,R},\, \cN_i}^{Do}(\pm i)=\pm i I_{\cN_i},     \no \\
&M_{T_{\varphi,R},\, \cN_i}^{Do}(z) = M_{T_{0,0},\, \cN_i}^{Do}(z)     \lb{5.59} \\
&\quad- \frac{z-i}{k_{\varphi,R}(z)} (u_{\varphi,R}(\ol{z},\dott),\dott)_{L^2_r((a,b))}\sum_{\ell=1}^2\big[e^{-i\varphi}R_{2,2}W_{\ell,2}^{Kr}(z)+W_{\ell,1}^{Kr}(z) \big]v_{\ell}(i,\dott)\big|_{\cN_i},\no\\
&\hspace*{9.5cm} z\in\bbC\backslash\bbR,\, z\neq\pm i,\no
\end{align}
where the scalar $k_{\varphi,R}(\dott)$ and the matrix $\big(W_{\ell,k}^{Kr}(\dott)\big)_{\ell,k=1}^2$ are given by \eqref{4.64} and \eqref{5.43}--\eqref{5.46}, respectively.
\end{theorem}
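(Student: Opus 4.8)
The plan is to mirror the proof of Theorem \ref{t5.2}, the only structural difference being that the three extensions treated here correspond to \emph{rank-one} (rather than rank-two) Krein resolvent perturbations of $T_{0,0}$. Consequently, the double sum over $j,k$ that governed the rank-two case collapses, in each of the present cases, to a single distinguished solution.

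First, for each extension I would record the relevant rank-one resolvent identity from Theorem \ref{t4.3}: formula \eqref{4.60} with scalar \eqref{4.59} for $T_{\alpha,0}$ (distinguished solution $u_2(z,\dott)$), formula \eqref{4.57} with scalar \eqref{4.56} for $T_{0,\beta}$ (distinguished solution $u_1(z,\dott)$), and formula \eqref{4.65}, \eqref{4.66} with scalar \eqref{4.64} for $T_{\varphi,R}$ with $R_{1,2}=0$ (distinguished solution $u_{\varphi,R}(z,\dott)=e^{-i\varphi}R_{2,2}u_2(z,\dott)+u_1(z,\dott)$). I then substitute each identity into the defining relation \eqref{5.2}. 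Exactly as in the passage leading to \eqref{5.39}, the reference-operator part reproduces $M_{T_{0,0},\,\cN_i}^{Do}(z)$, while the rank-one term contributes a single correction of the form $(z^2+1)\,k(z)^{-1}(w(\ol z,\dott),\dott)_{L^2_r((a,b))}\,P_{\cN_i}w(z,\dott)\big|_{\cN_i}$, with $w$ the distinguished solution and $k$ the corresponding scalar.

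Next I evaluate the projection $P_{\cN_i}w(z,\dott)$ using the already-established identity \eqref{5.47}. For $T_{\alpha,0}$ this selects the Wronskians $W_{\ell,2}^{Kr}(z)$ and for $T_{0,\beta}$ the Wronskians $W_{\ell,1}^{Kr}(z)$; for $T_{\varphi,R}$ with $R_{1,2}=0$ I use linearity of $P_{\cN_i}$ together with \eqref{5.47} for both $k=1,2$, producing the combination $e^{-i\varphi}R_{2,2}W_{\ell,2}^{Kr}(z)+W_{\ell,1}^{Kr}(z)$ multiplying $v_\ell(i,\dott)$, precisely as displayed in \eqref{5.59}. The three stated formulas then follow after the elementary simplification $(z^2+1)/(z+i)=z-i$ and insertion of the explicit scalars \eqref{4.59}, \eqref{4.56}, \eqref{4.64}. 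The values at $z=\pm i$ are immediate from \eqref{5.3}.

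I expect no genuine obstacle: the argument is a direct specialization of the rank-two computation in Theorem \ref{t5.2}, so the substance is already in place. The only delicate points are bookkeeping ones. One must track the sign produced by each scalar: since $K_{\alpha,0}(z)=-\cot(\alpha)-\wti u_2^{\,\prime}(z,a)$ carries an overall minus relative to $K_{0,\beta}(z)=\cot(\beta)+\wti u_1^{\,\prime}(z,b)$, the correction term emerges with a $+$ sign in \eqref{5.53} but a $-$ sign in \eqref{5.56}, and this discrepancy must be reconciled rather than treated as an error. The other point requiring care is verifying that the distributive evaluation of $P_{\cN_i}u_{\varphi,R}(z,\dott)$ over the defining linear combination \eqref{4.66} reproduces exactly the bracketed Wronskian sum in \eqref{5.59}, with the factor $e^{-i\varphi}R_{2,2}$ attached to the $W_{\ell,2}^{Kr}$ term and not to $W_{\ell,1}^{Kr}$.
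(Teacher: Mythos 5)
Your proposal is correct and follows essentially the same route as the paper: substitute the rank-one Krein resolvent identities \eqref{4.60}, \eqref{4.57}, \eqref{4.65} into \eqref{5.2}, evaluate the projections via \eqref{5.47} (using linearity together with \eqref{4.66} in the coupled case), and simplify $(z^2+1)/(z+i)=z-i$ before inserting the explicit scalars \eqref{4.59}, \eqref{4.56}, \eqref{4.64}. You have also correctly anticipated the sign bookkeeping that distinguishes \eqref{5.53} from \eqref{5.56} and the placement of the factor $e^{-i\varphi}R_{2,2}$ in \eqref{5.59}, so nothing further is needed.
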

\begin{proof}
To prove item $(i)$, let $\alpha\in (0,\pi)$.  By \eqref{5.3}, $M_{T_{\a,0},\, \cN_i}^{Do}(\pm i) = \pm iI_{\cN_i}$.  In order to establish \eqref{5.53}, let $z\in \bbC\backslash\bbR$, $z\neq \pm i$, be fixed.  Taking $T_{A,B}=T_{\alpha,0}$ in \eqref{5.2} and invoking \eqref{4.60}, one obtains
\begin{equation}\lb{5.52}
M_{T_{\a,0},\, \cN_i}^{Do}(z) = M_{T_{0,0},\, \cN_i}^{Do}(z) + \big(z^2+1\big)K_{\alpha,0}(z)^{-1} ( u_2(\ol{z},\dott),\,\cdot\,)_{L^2_r((a,b))} P_{\cN_i}u_2(z,\dott)\big|_{\cN_i}.
\end{equation}
Using \eqref{5.47} with $k=2$ in \eqref{5.52}, one obtains
\begin{align}
M_{T_{\a,0},\, \cN_i}^{Do}(z) &= M_{T_{0,0},\, \cN_i}^{Do}(z)\lb{5.54}\\
&\quad + (i-z)K_{\alpha,0}(z)^{-1} ( u_2(\ol{z},\dott),\,\cdot\,)_{L^2_r((a,b))} \sum_{\ell=1}^2 W_{\ell,2}^{Kr}(z)v_{\ell}(i,\dott)\big|_{\cN_i}.\no
\end{align}
Hence, \eqref{5.53} follows from \eqref{5.54} by applying the precise form for $K_{\alpha,0}(z)$ given in \eqref{4.59}.  This completes the proof of item $(i)$.

To prove item $(ii)$, let $\beta\in (0,\pi)$.  By \eqref{5.3}, $M_{T_{0,\b},\, \cN_i}^{Do}(\pm i) = \pm iI_{\cN_i}$.  In order to establish \eqref{5.56}, let $z\in \bbC\backslash\bbR$, $z\neq \pm i$, be fixed.  Taking $T_{A,B}=T_{0,\beta}$ in \eqref{5.2} and invoking \eqref{4.57}, one obtains
\begin{equation}\lb{5.55}
M_{T_{0,\b},\, \cN_i}^{Do}(z) = M_{T_{0,0},\, \cN_i}^{Do}(z) + \big(z^2+1\big)K_{0,\beta}(z)^{-1} ( u_1(\ol{z},\dott),\,\cdot\,)_{L^2_r((a,b))} P_{\cN_i}u_1(z,\dott)\big|_{\cN_i}.
\end{equation}
Using \eqref{5.47} with $k=1$ in \eqref{5.55}, one obtains
\begin{align}
M_{T_{0,\b},\, \cN_i}^{Do}(z) &= M_{T_{0,0},\, \cN_i}^{Do}(z)\lb{5.57}\\
&\quad + (i-z)K_{0,\beta}(z)^{-1} ( u_1(\ol{z},\dott),\,\cdot\,)_{L^2_r((a,b))} \sum_{\ell=1}^2 W_{\ell,1}^{Kr}(z)v_{\ell}(i,\dott)\big|_{\cN_i}.\no
\end{align}
Hence, \eqref{5.56} follows from \eqref{5.57} by applying the precise form for $K_{0,\beta}(z)$ given in \eqref{4.56}.  This completes the proof of item $(ii)$.

To prove item $(iii)$, let $\varphi \in [0,2\pi)$ and $R\in \SL(2,\bbR)$ with $R_{1,2}=0$.  By \eqref{5.3}, $M_{T_{\varphi,R},\, \cN_i}^{Do}(\pm i) = \pm iI_{\cN_i}$.  In order to establish \eqref{5.59}, let $z\in \bbC\backslash\bbR$, $z\neq \pm i$, be fixed.  Taking $T_{A,B}=T_{\varphi,R}$ in \eqref{5.2} and invoking \eqref{4.65}, one obtains
\begin{align}\lb{5.58}
M_{T_{\varphi, R},\, \cN_i}^{Do}(z) &= M_{T_{0,0},\, \cN_i}^{Do}(z) \\
&\quad+ \big(z^2+1\big)k_{\varphi,R}(z)^{-1} (u_{\varphi,R}(\ol{z},\dott),\dott)_{L^2_r((a,b))}P_{\cN_i}u_{\varphi,R}(z,\dott)\big|_{\cN_i}.   \no
\end{align}
By \eqref{5.41} and \eqref{5.42},
\begin{align}
\begin{split} 
P_{\cN_i}u_{\varphi,R}(z,\dott) &= \sum_{\ell=1}^2 \big(v_{\ell}(i,\dott),e^{-i\varphi}R_{2,2}u_2(z,\dott)+u_1(z,\dott)\big)_{L_r^2((a,b))}v_{\ell}(i,\dott)\lb{5.60}\\
&=-\frac{1}{z+i}\sum_{\ell=1}^2\big[e^{-i\varphi}R_{2,2}W_{\ell,2}^{Kr}(z)+W_{\ell,1}^{Kr}(z) \big]v_{\ell}(i,\dott). 
\end{split} 
\end{align}
Finally, \eqref{5.59} follows by combining \eqref{5.58} and \eqref{5.60}.
\end{proof}

\section{A Generalized Bessel-Type Operator Example} \lb{s7}

As an illustration of these results, we consider the following explicitly solvable generalized Bessel-type equation following the analysis in \cite{GNS21} (see also \cite{FGKLNS21}).
Let $a=0$, $b\in(0,\infty)\cup\{\infty\}$, and consider
\begin{equation}
\begin{split} \lb{7.1}
p(x)=x^\nu ,\quad r(x)=x^\d, \quad q (x) = \frac{(2+\d-\nu)^2\g^2-(1-\nu)^2}{4}x^{\nu-2}, \\
\d>-1,\ \nu<1,\ \g\geq0,\ x\in(0,b).     
\end{split}
\end{equation}
Then 
\begin{align}
\begin{split}
\tau_{\d,\nu,\g} = x^{-\d}\left[-\frac{d}{dx}x^\nu\frac{d}{dx} +\frac{(2+\d-\nu)^2\g^2-(1-\nu)^2}{4}x^{\nu-2}\right],\\
\d>-1,\; \nu<1,\; \g\geq0,\; x\in(0,b),     \lb{7.2} 
\end{split}
\end{align}
is singular at the endpoint $x=0$ (since the potential, $q$ is not integrable near $x=0$), regular at $x=b$ when $b\in(0,\infty)$, and in the limit point case at $x=b$ when $b=\infty$. Furthermore, $\tau_{\d,\nu,\g}$ is in the limit circle case at $x=0$ if $0\leq\g<1$ and in the limit point case at $x=0$ when $\g\geq1$. 

Solutions to $\tau_{\d,\nu,\g} u=zu$ are given by (cf.\ \cite{KW72}, \cite[No.~2.162, p.~440]{Ka61})
\begin{align}
y_{1,\d,\nu,\g}(z,x)&=x^{(1-\nu)/2} J_{\gamma}\big(2z^{1/2} x^{(2+\d-\nu)/2}/(2+\d-\nu)\big),\quad \g\geq0,\\[1mm] 
y_{2,\d,\nu,\g}(z,x)&=\begin{cases}
x^{(1-\nu)/2} J_{-\gamma}\big(2z^{1/2} x^{(2+\d-\nu)/2}/(2+\d-\nu)\big), & \g\notin\bbN_0,\\
x^{(1-\nu)/2} Y_{\g}\big(2z^{1/2} x^{(2+\d-\nu)/2}/(2+\d-\nu)\big), & \g\in\bbN_0,
\end{cases}\ \g\geq0,
\end{align}
where $J_{\mu}(\dott), Y_{\mu}(\dott)$ are the standard Bessel functions of order $\mu \in \bbR$ 
(cf.\ \cite[Ch.~9]{AS72}).

In the following we assume that 
\begin{equation}
\gamma \in [0,1) 
\end{equation}
to ensure the limit circle case at $x=0$. In this case it suffices to focus on  the generalized boundary values at the singular endpoint $x = 0$ following \cite{GLN20}. For this purpose we introduce principal and nonprincipal solutions $u_{0,\d,\nu,\g}(0, \dott)$ and $\hatt u_{0,\d,\nu,\g}(0, \dott)$ of $\tau_{\d,\nu,\g} u = 0$ at $x=0$ by
\begin{align}
\begin{split} 
u_{0,\d,\nu,\g}(0, x) &= (1-\nu)^{-1}x^{[1-\nu+(2+\d-\nu)\g]/2}, \quad \gamma \in [0,1),   \\
\hatt u_{0,\d,\nu,\g}(0, x) &= \begin{cases} (1-\nu)[(2+\d-\nu) \gamma]^{-1} x^{[1-\nu-(2+\d-\nu)\g]/2}, & \gamma \in (0,1),     \lb{7.6} \\
(1-\nu)x^{(1-\nu)/2} \ln(1/x), & \gamma =0,  \end{cases}\\
&\hspace*{4.5cm} \d>-1,\; \nu<1,\; x \in (0,1).
\end{split} 
\end{align}

\begin{remark}
Since the singularity of $q$ at $x=0$ renders $\tau_{\d,\nu,\g}$ singular at $x=0$ (unless, of 
course, $\gamma = (1-\nu)/(2+\d-\nu)$, in which case $\tau_{\d,\nu,(1-\nu)/(2+\d-\nu)}$ is regular at $x=0$), there is a certain freedom in the choice of the multiplicative constant in the principal solution $u_{0,\d,\nu,\g}$ of $\tau_{\d,\nu,\g} u = 0$ at $x=0$. Our choice of $(1-\nu)^{-1}$ in \eqref{7.6} reflects continuity in the parameters when comparing to boundary conditions in the regular case (cf.\ \cite[Remark~3.12\,$(ii)$]{GLN20}), that is, in the case $\d > -1$, $\nu < 1$, and 
$\gamma = (1-\nu)/(2+\d-\nu)$ treated in \cite{EZ78}. 
\hfill $\diamond$ 
\end{remark}

The generalized boundary values for $g \in \dom(T_{max,\d,\nu,\g})$ are then of the form
\begin{align}
\begin{split} 
\wti g(0) &= - W(u_{0,\d,\nu,\g}(0, \dott), g)(0)   \\
&= \begin{cases} \lim_{x \downarrow 0} g(x)\big/\big[(1-\nu)[(2+\d-\nu) \gamma]^{-1} x^{[1-\nu-(2+\d-\nu)\g]/2}\big], & 
\gamma \in (0,1), \\[1mm]
\lim_{x \downarrow 0} g(x)\big/\big[(1-\nu)x^{(1-\nu)/2} \ln(1/x)\big], & \gamma =0, 
\end{cases} 
\end{split} \\
\begin{split} 
\wti g^{\, \prime} (0) &= W(\hatt u_{0,\d,\nu,\g}(0, \dott), g)(0)   \\
&= \begin{cases} \lim_{x \downarrow 0} \big[g(x) - \wti g(0) (1-\nu)[(2+\d-\nu) \gamma]^{-1} x^{[1-\nu-(2+\d-\nu)\g]/2}\big]\\
\qquad\qquad\big/\big[(1-\nu)^{-1}x^{[1-\nu+(2+\d-\nu)\g]/2}\big], 
& \hspace{-.2cm}\gamma \in (0,1), \\[1mm]
\lim_{x \downarrow 0} \big[g(x) - \wti g(0) (1-\nu)x^{(1-\nu)/2} \ln(1/x)\big]\\
\qquad\qquad\big/\big[(1-\nu)^{-1}x^{(1-\nu)/2}\big], & \hspace{-.2cm} \gamma =0.
\end{cases}
\end{split} 
\end{align}

Next, introducing the standard normalized (at $x=0$) fundamental system of solutions 
$\phi_{\d,\nu,\g}(z, \dott,0), \theta_{\d,\nu,\g}(z, \dott,0)$ of 
$\tau_{\d,\nu,\g} u = z u$, $z \in \bbC$, that is real-valued for $z \in \bbR$ and entire with respect to $z \in \bbC$ by 
\begin{align}
\begin{split} 
\wti \phi_{\d,\nu,\g} (z,0,0) &= 0, \quad 
\wti \phi_{\d,\nu,\g}^{\, \prime} (z,0,0) = 1,    \\ 
\wti \theta_{\d,\nu,\g} (z,0,0) &= 1, \quad \, 
\wti \theta_{\d,\nu,\g}^{\, \prime} (z,0,0) = 0, \quad 
z \in \bbC,     \lb{7.9}
\end{split} 
\end{align}
one obtains explicitly, 
\begin{align}
& \phi_{\d,\nu,\g}(z,x,0) = (1-\nu)^{-1}(2+\d-\nu)^\gamma\Gamma(1+\g) z^{- \gamma/2} 
y_{1,\d,\nu,\g}(z,x),\no \\
& \hspace*{3.15cm} \d>-1,\; \nu<1,\; \gamma \in [0,1),\; z \in \bbC, \; x \in (0,b),      \\
& \theta_{\d,\nu,\g}(z,x,0) = \begin{cases} (1-\nu)(2+\d-\nu)^{-\gamma - 1} \gamma^{-1} \Gamma(1 - \gamma) 
z^{\gamma/2} y_{2,\d,\nu,\g}(z,x), & \hspace{-.1cm} \gamma \in (0,1), \\[1mm]
(1-\nu)(2+\d-\nu)^{-1} [- \pi y_{2,\d,\nu,0}(z,x)\\
\quad+(\ln(z)-2\ln(2+\d-\nu)+2\g_E) y_{1,\d,\nu,0}(z,x)], & \hspace{-.1cm} \gamma =0, 
\end{cases}      \no \\ 
& \hspace*{6.2cm} \d>-1,\; \nu<1,\; z \in \bbC, \; x \in (0,b),    \\
& W(\theta_{\d,\nu,\g}(z,\dott,0), \phi_{\d,\nu,\g}(z,\dott,0)) =1, \quad z \in \bbC,
\end{align}
where $\Gamma(\dott)$ denotes the Gamma function, and $\gamma_{E} = 0.57721\dots$ represents Euler's constant.

We now turn to the cases of computing Donoghue $m$-functions for the generalized Bessel operator in general on the infinite interval and for the Krein--von Neumann extension on the finite interval.

\begin{example}[Infinite Interval] \lb{e7.2} 
Let $b=\infty$. We begin by finding $\psi_{0,\d,\nu,\g}(z,\dott)$ described in Hypothesis \ref{h3.1} for this example.

Since $\tau_{\d,\nu,\g}$ is in the limit point case at $\infty$ $($actually, it is in the strong limit point case at infinity since $q$ is bounded on any interval of the form $[R,\infty)$, $R>0$, and the strong limit point property of $\tau_{\d,\nu,\g =(1-\nu)/(2+\d-\nu)}$ has been shown in \cite{EZ78}$)$, to find the Weyl--Titchmarsh solution and $m$-function corresponding to the Friedrichs $($resp., Dirichlet$)$ boundary condition at $x=0$, one considers the requirement
\begin{align}\lb{7.13}
& \psi_{0,\d,\nu,\g}(z,\dott)=\theta_{\d,\nu,\g}(z,\dott,0) 
+ m_{0,\d,\nu,\g}(z)\phi_{\d,\nu,\g}(z,\dott,0)\in L^2((0,\infty);x^\d dx),    \no \\
& \hspace*{8.85cm} z \in \bbC \backslash [0,\infty). 
\end{align}
This implies
\begin{align}
& \psi_{0,\d,\nu,\g}(z,x) = \begin{cases} i (1-\nu)(2+\d-\nu)^{-\gamma -1} \gamma^{-1} \Gamma(1 - \gamma) \sin(\pi \gamma) 
z^{\gamma/2}  \\
\quad \times x^{(1-\nu)/2} H^{(1)}_{\gamma}\big(2z^{1/2} x^{(2+\d-\nu)/2}/(2+\d-\nu)\big), & \gamma \in (0,1), \\[1mm]
i \pi(1-\nu)/(2+\d-\nu) x^{(1-\nu)/2}\\
\quad \times H^{(1)}_0\big(2z^{1/2} x^{(2+\d-\nu)/2}/(2+\d-\nu)\big), & \gamma = 0, 
\end{cases}    \no \\
& \hspace*{4.95cm} \d>-1,\; \nu<1,\; z \in \bbC \backslash [0,\infty), \; x \in (0,\infty), \label{7.14}   \\
& m_{0,\d,\nu,\g}(z) = \begin{cases} - e^{-i \pi \gamma} (1-\nu)^2(2+\d-\nu)^{- 2 \gamma - 1} \gamma^{-1} \\
\quad \times [\Gamma(1 - \gamma)/\Gamma(1+\gamma)] z^{\gamma}, & \gamma \in (0,1), \\[1mm]
 (1-\nu)^2/(2+\d-\nu)   \\
\quad \times [i \pi -\ln(z)+ 2\ln(2+\d-\nu)- 2\gamma_{E}], & \gamma = 0,    
\end{cases}     \label{7.15} \\
& \hspace*{5.25cm} \d>-1,\; \nu<1,\; z \in \bbC \backslash [0,\infty),    \no 
\end{align}
where $H_{\mu}^{(1)}(\dott)$ is the Hankel function of the first kind and of order $\mu \in \bbR$ 
$($cf.\ \cite[Ch.~9]{AS72}$)$. In particular, it is immediate from \eqref{7.13} and \eqref{7.9} that $\wti \psi_{0,\d,\nu,\g}(z,0)=1$. We mention that the results \eqref{7.14} and \eqref{7.15} coincide with the ones obtained in \cite{GLN20} when $\d=\nu=0$ and \cite{EZ78} when $\g=(1-\nu)/(2+\d-\nu)$.

Substituting the explicit form of $\psi_{0,\d,\nu,\g}(z,\dott)$ given in \eqref{7.14} into Theorems \ref{t6.1} and \ref{t6.2} yields the Friedrichs extension Donoghue $m$-function, $M_{T_{0,\d,\nu,\g},\, \cN_i}^{Do}(z),$ and the Donoghue $m$-function for all other self-adjoint extensions, $M_{T_{\a,\d,\nu,\g},\, \cN_i}^{Do}(z),$ $\a\in(0,\pi),$ respectively. In particular, since $\wti \psi_{0,\d,\nu,\g}^{\, \prime}(z,0)=m_{0,\d,\nu,\g}(z)$ one finds from Theorem \ref{t6.1} and \eqref{7.15},
\begin{align}
M_{T_{0,\d,\nu,\g},\, \cN_i}^{Do}(z)&=\Bigg[-i+\dfrac{m_{0,\d,\nu,\g}(z)-m_{0,\d,\nu,\g}(-i)}{\Im(m_{0,\d,\nu,\g}(i))}\Bigg]I_{\cN_i}  \no \\
&=\begin{cases}\big\{-i-[\sin(\pi\g/2)]^{-1}e^{-i\pi\g}\big(z^\g-e^{3i\pi/2}\big)\big\}I_{\cN_i},  &  \g\in(0,1),\\[1mm]
\{-i+(2/\pi)[(3i\pi/2)-\ln(z)]\}I_{\cN_i},  &  \g=0,
\end{cases}\no\\
&\hspace{4.45cm} \d>-1,\; \nu<1,\; z\in \bbC\backslash[0,\infty),   \lb{7.16}
\end{align}
where the branch of the logarithm is chosen so that $\ln(-i)=3i\pi/2$.  Thus, by Theorem \ref{t6.2} with $\a\in(0,\pi)$,
\begin{align}
M_{T_{\a,\d,\nu,\g},\, \cN_i}^{Do}(z) &= M_{T_{0,\d,\nu,\g},\, \cN_i}^{Do}(z)+ (i-z)\frac{m_{0,\d,\nu,\g}(z)-m_{0,\d,\nu,\g}(-i)}{\cot(\alpha) + m_{0,\d,\nu,\g}(z)}   \no \\
&\quad\; \times (\psi_{0,\d,\nu,\g}(\overline{z},\dott),\dott)_{L_r^2((a,b))}\psi_{0,\d,\nu,\g}(i,\dott)\big|_{\cN_i},  \lb{7.17} \\
&\hspace{1.55cm}\d>-1,\; \nu<1,\; \g\in[0,1),\; z\in\bbC\backslash\bbR.  \no
\end{align}
\end{example}

\begin{example}[Finite Interval] \lb{e7.3} 
Let $b\in(0,\infty)$. It is well known that $T_{min,\d,\nu,\g} \geq \varepsilon I_{L^2_r((a,b))}$ for some $\varepsilon>0$ $($see, e.g., the simpler case $\d=\nu=0$ treated in \cite[Thm. 5.1]{GPS21}$)$. Thus, the Krein--von Neumann extension $T_{0,R_K,\d,\nu,\g}$ of $T_{min,\d,\nu,\g}$ is of the form $($see \cite[Example 4.1]{FGKLNS21}$)$ 
\begin{align}
& T_{0,R_K,\d,\nu,\g} f = \tau_{\d,\nu,\g} f,    \\
& f \in \dom(T_{0,R_K,\d,\nu,\g})=\bigg\{g\in\dom(T_{max,\d,\nu,\g}) \, \bigg| \begin{pmatrix} g(b) 
\\ g^{[1]}(b) \end{pmatrix} = R_{K,\d,\nu,\g} \begin{pmatrix}
\wti g(0) \\ {\wti g}^{\, \prime}(0) \end{pmatrix} \bigg\},     \no
\end{align}
where 
\begin{align}
&R_{K,\d,\nu,\g}=\begin{cases}
b^{[\nu-1-(2+\d-\nu)\g]/2}\\
\quad \times\begin{pmatrix}  \dfrac{1-\nu}{(2+\d-\nu)\g}b^{1-\nu} & \dfrac{1}{1-\nu}b^{1-\nu+(2+\d-\nu)\g} \\
\dfrac{(1-\nu)^2}{2(2+\d-\nu)\g}-\dfrac{1-\nu}{2} & \left[\dfrac{1}{2}+\dfrac{(2+\d-\nu)\g}{2(1-\nu)}\right] b^{(2+\d-\nu)\g}
\end{pmatrix},     \\
\hfill \g\in(0,1), \\[1mm]
\begin{pmatrix}  (1-\nu)\ln(1/b)b^{(1-\nu)/2} & \dfrac{1}{1-\nu}b^{(1-\nu)/2} \\
\dfrac{(1-\nu)^2\ln(1/b)-2(1-\nu)}{2}b^{(\nu-1)/2} & \dfrac{1}{2}b^{(\nu-1)/2}
\end{pmatrix},
\hfill \g=0,
\end{cases} \no \\
&\hspace{9cm}\d>-1,\; \nu<1.
\end{align}

One now explicitly finds the solutions in \eqref{4.52} for this example by choosing 
\begin{align}\lb{7.20}
u_{1,\d,\nu,\g}(z,x)&=\phi_{\d,\nu,\g}(z, x,0)/\phi_{\d,\nu,\g}(z, b,0), \no \\
u_{2,\d,\nu,\g}(z,x)&=\theta_{\d,\nu,\g}(z, x,0)-[\theta_{\d,\nu,\g}(z, b,0)/\phi_{\d,\nu,\g}(z, b,0)]\phi_{\d,\nu,\g}(z, x,0), \\
&\hspace*{3.5cm} \d>-1,\; \nu<1,\; \g\in[0,1),\; x \in (0,b), \no
\end{align}
from which substituting \eqref{7.20} into \eqref{5.27} yields the expressions for $v_{j,\d,\nu,\g}(z,\dott),$ $j=1,2.$ Finally, substituting the expressions for $u_{j,\d,\nu,\g}(z,\dott),$ $v_{j,\d,\nu,\g}(z,\dott),$ $j=1,2,$ and the explicit form of $K_{0,R_K,\d,\nu,\g}(z)$ given in \eqref{4.25a} $($utilizing \eqref{7.6}$)$ into Theorems \ref{t5.1} and \ref{t5.2} yields the Friedrichs extension Donoghue $m$-function, $M_{T_{0,0,\d,\nu,\g},\, \cN_i}^{Do}(z),$ and the Krein--von Neumann extension Donoghue $m$-function, $M_{T_{0,R_K,\d,\nu,\g},\, \cN_i}^{Do}(z),$ respectively.
\end{example}

\medskip

\noindent 
{\bf Acknowledgments.} R.~N. would like to thank the U.S. National Science Foundation for summer support received under Grant DMS-1852288 in connection with {\it REU Site: Research Training for Undergraduates in Mathematical Analysis with Applications in Allied Fields.} M.~P. was supported by the Austrian Science Fund under Grant W1245.

 
\end{document}